\numberwithin{equation}{section}
\numberwithin{equation}{subsection}
\theoremstyle{plain}
\newtheorem{lemma}[equation]{Lemma}
\newtheorem{thm}[equation]{Theorem}
\newtheorem{cor}[equation]{Corollary}
\newtheorem{prop}[equation]{Proposition}
\theoremstyle{definition}
\newtheorem{defn}[equation]{Definition}
\numberwithin{equation}{section}
\numberwithin{equation}{subsection}
\newcommand{ \lk }{ \mbox{lk} }
\def\C{\mathbb C}
\def\R{\mathbb R}
\def\Z{\mathbb Z}
\newcommand{\calO}{{\mathcal O}}
\newcommand{\labelpar}{\label}
\title{The boundary of the Milnor fibre
of certain non--isolated singularities}
\author{Andr\'as N\'emethi}
\thanks{Both authors  were partially supported by NKFIH Grant  112735 and
ERC Adv. Grant LDTBud of A. Stipsicz at R\'enyi Institute of Math., Budapest.
GP was also supported by `Lend\"ulet' program `LTDBud' at R\'enyi
Institute.}
\address{Alfr\'ed R\'enyi Institute of Mathematics,
Hungarian Academy of Sciences,
Re\'altanoda utca 13-15, H-1053, Budapest, Hungary \newline
 \hspace*{4mm} ELTE - University of Budapest, Dept. of Geometry, Budapest, Hungary \newline \hspace*{4mm}
BCAM - Basque Center for Applied Math.,
Mazarredo, 14 E48009 Bilbao, Basque Country – Spain}
\email{nemethi.andras@renyi.mta.hu }
\author{Gerg\H{o} Pint\'er}
\address{A. R\'enyi Institute of Mathematics, 1053 Budapest,
Re\'altanoda u. 13-15, Hungary.}
\email{pinter.gergo@renyi.mta.hu}
\keywords{hypersurface singularities, non-isolated singularities,
links of singularities, Milnor fibre, Seifert 3-manifolds, plumbing graphs,
 boundary of the Milnor fibre}
\subjclass[2010]{Primary. 32S05, 32S25, 32S50, 57M27,
Secondary. 14Bxx,  32Sxx, 57R57, 55N35}
\date{}
\begin{document}

\begin{abstract}
Let $ \Phi: (\C^2, 0) \to ( \C^3, 0) $ be a
finitely determined complex analytic germ and let $(\{f=0\},0)$ be
the reduced equation of its image, a non--isolated hypersurface singularity.
We provide the plumbing graph of the boundary of the Milnor fibre of $f$ from
the double--point--geometry of $\Phi$.
\end{abstract}

\maketitle

%\begin{center}
% {\em Dedicated to }
%\end{center}

\pagestyle{myheadings} \markboth{{\normalsize
A. N\'emethi and G. Pint\'er}}{ {\normalsize The boundary of the Milnor fibre}}

\section{Introduction}

The Milnor fibre of an isolated hypersurface surface singularity is well studied
and it is rather well understood. It has the homotopy type of a bouquet
of 2--spheres, it is an oriented  smooth 4--manifold whose boundary
is diffeomorphic with the link of the singular germ and also with the boundary
of (any) resolution of the germ \cite{MBook}.
This boundary is a  plumbed
3--manifold and one can take as a plumbing graph any of the resolution graphs.
It is the basic bridge between the Milnor fibre and the resolution (both of them being complex analytic fillings of it), and this way  produces several
nice formulas connecting the invariants of these fillings. Here primarily
we think about formulas of Laufer \cite{Laufer77b} or Durfee \cite{Du} and their generalizations, see e.g. \cite{Wa}.

For non--isolated hypersurface singularities in $(\C^3,0)$ the situation is
more complicated. First of all, the link of the germ is not smooth,
hence the boundary of the Milnor fibre cannot be isomorphic with it.
Moreover, a (any) resolution is in fact the resolution of the normalization
(which might contain considerable less information than what one needs in order to
recover the Milnor fibre $F$, or the Milnor fibre boundary $\partial F$),
see e.g. \cite{Si,NSz}. For example (see the present article),
it can happen that the normalization is smooth, while $\partial F$ is rather complicated.
However, the boundary of the Milnor fibre is still a
plumbed 3--manifold, and one expects that its plumbing graph
 codifies considerable information about the germ.
 $\partial F$  can be obtained by surgery of two pieces: one of
them is the boundary of the resolution of the normalization, the other one is related with the transversal singularities associated with the singular curves of the hypersurface singularity \cite{Si,NSz,MP}. In particular, the boundary
of the Milnor fibre plays the same crucial role as in the isolated singularity case (in fact, it is the unique object in this case, which might fulfill this role):
it is the first step in the description of the Milnor fibre, and it is the bridge in the direction of the resolution and the transversal types of the components of the singular locus.

\cite{NSz} presents a
general algorithm, which provides the boundary of the Milnor fibre $\partial F$
for any non--isolated hypersurface singularity in
$(f^{-1}(0),0)\subset (\C^3,0)$.
However, this algorithm uses (some information from)
the embedded resolution of this pair, hence it is rather technical and
in concrete examples is rather computational. Therefore, for particular families
of singularities it is preferable to find more direct description of the plumbing graph
 of $\partial F$ directly from the peculiar intrinsic geometry of the germ.
For  several examples in the literature see e.g.
\cite{NSz} (homogeneous singularities, cylinders of plane curves,
$f=zf'(x,y)$, $f=f'(x^ay^b,z)$), \cite{Baldur} ($f=g(x,y) + zh(x,y)$);
or for other classes consult also
\cite{MP2} and \cite{dBM}.

In this note we assume that $(f^{-1}(0),0)$ is the image
of a finitely determined complex analytic germ
$ \Phi: (\C^2, 0) \to ( \C^3, 0) $.
This means that $  \Phi  $ is a
 generic immersion off the origin, cf. \cite{Wall,Mond2}. Note that for such germs there is a notion similar to the Milnor fibre. This is the disentanglement, the image of a stabilization of $ \Phi $. In some sense it plays the role of the Milnor fibre, e.g. its (singular) boundary is the image of the stable immersion $ \Phi |_{ S^3 }: S^3 \looparrowright S^5 $ associated with $ \Phi $ \cite{NP}, and it is homotop equivalent with a wedge of $2$-spheres \cite{Mondvan, Mondwh}. For such germs $ \Phi $ the disentanglement has a huge literature, it is even more studied than the Milnor fibre.

The main result of the article provides the plumbing graph of $\partial F$,
as a surgery, starting
from the embedded resolution graph  of the double points $(D,0)\subset
(\C^2,0)$. The needed additional pieces, which will be glued
to this primary object
correspond to certain fibre bundles over $S^1$
with fibres the local Milnor fibre of the
 transversal singularity type (and monodromy the corresponding
vertical monodromy). The surgery itself is
 characterized by some homologically determined integers
 combined with  the newly defined {\it `vertical index'}
associated with the irreducible components of the singular locus of $f^{-1}(0)$.

We believe that the present method can serve as a prototype for further more
general families as well.

The structure of the article is the following. In
Section 2 we introduce the notations, in Section 3 we describe by several characterizations the
surgery pieces associated with the components of the singular locus. In Section 4
we describe  the gluing, while the last section contains several concrete
examples (the families are taken from the Mond's list  of simple germs \cite{Mond2}).

Below $ \Z_n\langle g\rangle$ denotes the cyclic group of order $n$ generated by $g$.
We write $\partial _{x_i}f$ instead of $\partial f/\partial x_i$.
%\subsection{}
%
%\begin{enumerate}
% \item $ a \simeq b $: Two smooth manifolds $a $ and $ b $ are (orientation preserving)
% diffeomorphic.
% \item $ a \cong b $: Two algebraic stuctures $ a$ and $ b$ of the same type are isomorphic.
% \item $ G\langle g\rangle$ denoted the cyclic group generated by $g$.
%\end{enumerate}

%\vspace{1mm}

%{\bf Acknowledgement.} The second author wishes to thank Juan  Nu\~no-Ballesteros
%and David Mond for the very informative discussions, and for the warm hospitality of the Universities of
%Valencia and Warwick during his visit.

\section{Preliminaries}

\subsection{} Let $ \Phi: (\C^2, 0) \to ( \C^3, 0) $ be a
 complex analytic germ singular only at the origin.
 We assume that $ \Phi $ is \emph{finitely determined}.
 This happens exactly when  $  \Phi  $ is a {\it
 generic immersion} off  the origin, that is, off the origin it has only single and double values and at
 each double value the intersection of the two smooth branches is
 transversal, cf.
 \cite[Theorem 2.1]{Wall} or \cite[Corollary 1.5]{Mond2}.

Write $(X,0):=({\rm im}(\Phi),0)$ and
let $ f: ( \C^3, 0) \to ( \C, 0) $ be the reduced equation of $(X,0)$.
 Note that $ (X, 0) $ is a {\it non-isolated} hypersurface singularity, except when $ \Phi $ is a regular map (see \cite{NP}).
 We denote by $ (\Sigma,0) = ( \partial_{x_1} f, \partial_{x_2} f, \partial_{x_3} f)^{-1} (0) \subset (\C^3,0) $
 the {\it reduced} singular locus of  $(X,0)$
 (which equals the closure of the set of double values of $\Phi$),
 and by $(D,0)$ the {\it reduced}
 double point curve $ \Phi^{-1} (\Sigma ) \subset (\C^2,0) $.
 (In fact, the finite determinacy of the germ $ \Phi$
 is equivalent with the fact that the double point curve is reduced;  see e.g. \cite{nunodouble}.)

Let $B^6_\epsilon$ be the $\epsilon$--ball in $\C^3$ centered at the origin,
and $ S^5_\epsilon$ its boundary ($\epsilon\in {\mathbb R}_{>0}$). Then for $\epsilon$ sufficiently small $(B_\epsilon^6,0)$ is a Milnor ball for the pair
$(\Sigma,0)\subset (X,0)$, and, furthermore,
$\mathfrak{B}_\epsilon:=\Phi^{-1}(B_\epsilon^6)$
is a (non--metric) $C^\infty$ ball in $(\C^2,0)$, which might serve as a Milnor
ball for $(D,0)$, cf. \cite{looijenga}. We set $ \mathfrak{S}^3 = \Phi^{-1} (S^5_{ \epsilon})=
\partial \mathfrak{B}_\epsilon$, diffeomorphic to $S^3$,
and we treat it as the usual Milnor--ball boundary 3--sphere.
Recall that the  immersion associated with
$ \Phi $ at the level of local neighbourhood boundaries
is $ \Phi |_{ \mathfrak{S}^3} : \mathfrak{S}^3 \to S^5 $ \cite{NP}.

\subsection{Components and links of $\Sigma$ and $D$.}
Let $ \Upsilon \subset S^5_\epsilon$ be the link of $\Sigma$.
It is exactly the  set of double values  of $  \Phi |_{ \mathfrak{S}^3} $.
Let $ L = \Phi^{-1}(\Upsilon) \subset \mathfrak{S}^3 $ denote the set of double points of
$  \Phi |_{ \mathfrak{S}^3} $, that is,
$ L\subset  \mathfrak {S}^3  $ is the link of $D$.
Assume that the reduced equation of $D$ is
$ d: (\C^2, 0) \to (\C, 0) $, whose irreducible decomposition is
$ d= \Pi_{i=1}^l d_i $. The irreducible components of $ D$ are denoted by
$ (D_i, 0) = d_i^{-1} (0) $
and their link components in $L$ by $L_i$,
 $1\leq  i\leq  l$.
$ D $ is equipped with an involution $ \iota: D \to D $ which pairs the double points.
 $ \iota|_{ L} $ induces a permutation (pairing) $ \sigma$ of $ \{1, 2, \dots , l \} $, such that
 $ \iota (L_i) = L_{ \sigma(i)} $. Moreover,
 $ \Phi|_{ L }: L \to \Upsilon $ is a double covering with
 $ \Phi (L_i)=\Phi (L_{ \sigma(i) }) $.
 If $ i = \sigma(i) $ for some $i$, then $ \Phi |_{L_i} $ is a nontrivial double covering
  of its image, while  above  the other components the covering  is  trivial.
 Let $ J $ be the set of pairs   $\{i, \sigma (i)\}$ ($1\leq i \leq l$), it
  is the index set of the   components of $ \Upsilon $; they will
  be denoted by $ \{\Upsilon_j\}_{ j \in J}$.

All link components are considered with  their natural orientations.

\subsection{The embedded resolution of $ (D,0)\subset (\C^2,0) $}\label{ss:embres}
Let $\pi: (\widetilde{\C^2}, E) \to ( \C^2, 0) $ be a \emph{good embedded resolution} of $ ( D , 0)$,
$ E=\pi^{-1}(0)$ the reduced exceptional  curve,
$\bigcup_{v \in V} E_v $ the irreducible components of $E$, each diffeomorphic to  $ \C {\mathbb P}^1 $
intersecting each other transversally.
Let $ e_v \in \Z $ denote the self--intersection number
of $ E_v $ in $ \widetilde{ \C^2}$. Clearly $e_v < 0 $ for all $ v$.

We denote the \emph{strict transform}
$\overline{(d \circ \pi)^{-1} (0) \setminus E} \subset \widetilde{ \C^2}$
 of $ D $ by
$\tilde{D}$, and its components by  $ \tilde{D}_i  $,
 $1\leq i\leq l$. Each $ \tilde{D}_i  $ intersects (transversally)
 only one exceptional component, say $ E_{v(i)} $.

The \emph{total transform} of $ D_i $ is the divisor
 $ {\rm div}(d_i \circ \pi)= \Sigma_{v \in V} m_i(v)
 \cdot E_v + \tilde{D}_i $,
where the multiplicity $ m_i(v) \in \Z_{>0} $ is the
vanishing order of  $d_i \circ \pi $ along $ E_v $.

 Let $ \Gamma $ be the embedded resolution graph of $ (D, 0) \subset ( \C^2, 0) $ associated with the resolution ${\pi}$. The strict transforms
$ \{\tilde{D}_i\}_{i=1}^l  $ will be codified (as usual) by
arrowhead vertexes $\{a_i\}_{i=1}^l$.

The multiplicities $m_i(v)$ ($v \in V $, $1\leq i\leq l$) are determined by $ \Gamma $ via the identities (see e.g. \cite{Lauferbook,Five})
\begin{equation}\labelpar{eq:mult}
 \Sigma_{v \in V} m_i(v) (E_v \cdot E_w)  + (\tilde{D}_i\cdot E_w) = 0 \ \ \mbox{for all $w\in V$.}
 \end{equation}

\section{The manifold $ Y $}\labelpar{s:Y}
In the construction of the boundary of the Milnor fibre we will need a special
3--manifold with torus boundary. Its several realizations and properties
will be discussed in this section.

In the sequel $S^1$ (as the boundary of the unit disc of $\C$) and the real interval
$I:=[-1,1]$ are considered with their natural orientations;
 $\bar{\cdot }$ denotes  the complex conjugation of $\C$.

\subsection{The definition  of $Y$}\labelpar{ss:cY} Consider the $\Z_2$--action on $ S^1 \times S^1 \times I$ defined by the involution
%\begin{equation}\labelpar{eq:act}
 $(x, y, z) \mapsto (-x, \bar{y}, -z)$, % \mbox{,}
%\end{equation}
and define $Y$ as the factor
\begin{equation}\labelpar{eq:Y}
 Y= \frac{S^1 \times S^1 \times I}{(x,y,z) \sim (-x, \bar{y}, -z)}.
\end{equation}

$Y$ is a $3$--manifold with a boundary diffeomorphic  with $S^1\times S^1$.
The  projections to different  components provide
 different `realizations' of $Y$.

\vspace{2mm}

(1) The projection  to the first coordinate $x$ gives a fibration
\begin{equation}\labelpar{eq:cyl}
 \begin{array}{ccc}
  S^1 \times I & \rightarrow & Y \\
             &              & \downarrow \\
             &              & S^1
 \end{array}
\end{equation}
where the base space $ S^1=S^1/\{x \sim -x\}  $
is parametrized by $ x^2 $,  and the monodromy
diffeomorphism $ S^1 \times I \to S^1 \times I $
over the base space is  $ (y, z)  \mapsto ( \bar{y} , -z) $.

\vspace{2mm}

(2) The projection  to the first two coordinates $(x,y)$ realizes $Y$ as the total space of a fibration
\begin{equation}\labelpar{eq:klein}
 \begin{array}{ccc}
   I & \rightarrow & Y \\
             &              & \downarrow \\
             &              & \mathcal{K}
 \end{array}
\end{equation}
with  fibre $I$ and  base space
%\begin{equation}
$ \mathcal{K} := (S^1 \times S^1)/\{(x,y) \sim (-x, \bar{y})\}$,
%\end{equation}
the Klein bottle.
The  factorization $S^1\times S^1\to \mathcal{K}$
 is the orientation double cover of $\mathcal{K} $. In particular,
the fibration (\ref{eq:klein})
is the segment bundle of the orientation line bundle of $ \mathcal{K}$,
hence  the orientation double cover of $ \mathcal{K} $ is realized also
 by the restriction of the bundle map to the boundary
$ \partial Y \approx S^1 \times S^1 $.

\vspace{2mm}

(3) The projection to the $(x, z)$ coordinates realizes $Y$ as the total space of a fibration
\begin{equation}\labelpar{eq:mob}
 \begin{array}{ccc}
  S^1  & \rightarrow & Y \\
             &              & \downarrow \\
             &              & \mathcal{M}
 \end{array}
\end{equation}
over the base space
%\begin{equation}
$ \mathcal{M} := (S^1 \times I)/\{(x,z) \sim (-x, -z)\}$,
%\end{equation}
the M\"obius band.
In this way, $Y$ appears as the
tangent circle bundle of $\mathcal{M}$, i.e. as
the sub--bundle of the tangent bundle $ T \mathcal{M} $ consisting of unit tangent vectors.
This follows from the fact that both circle bundles have  the same monodromy map
 along the midline of $ \mathcal{M} $, namely $S^1 \to S^1$, $y \mapsto \bar{y}$.

\vspace{2mm}

(4) The projection  to the $(y,z)$ coordinates
realizes $Y$ as the total space of a projection
\begin{equation}\labelpar{eq:seif}
 \begin{array}{ccc}
   S^1 & \rightarrow & Y \\
             &              & \downarrow \\
             &              & D^2
 \end{array}
 \end{equation}
 to the  base space
%\begin{equation}
 $(S^1 \times I)/\{(y,z) \sim (\bar{y}, -z)\}$,
%\end{equation}
 the 2--disc $ D^2 $. Although the involution $ (y,z) \mapsto (\bar{y}, -z) $ has two fix points $(-1,0) $ and $(1,0)$,  the factor $D^2$ can be smoothed.
 However, the projection $Y\to D^2$  is not a locally trivial fibration:
 it is a Seifert fibration with two exceptional fibres sitting
  above $(-1,0) $ and $(1,0)$.   We will refer to this $S^1$--fibration
  as the \emph{canonical Seifert fibration} of $ Y$.

 The Seifert invariants of the exceptional fibres can be calculated as in
  p. 307 of \cite{neumann1}.
The two exceptional fibres of the canonical Seifert fibration \eqref{eq:seif}
can be seen in the projection (3) as well: they correspond to the
tangent vectors of the midline. On the other hand, a generic orbit consists of those
unit tangent vectors of $\mathcal{M}$, which form non--zero
angle $\pm \alpha$ (with $\alpha$ fixed) with the  midline.
Thus,  a generic fibre in a neighbourhood of an exceptional fibre
goes around twice and  both  Seifert invariants are $(2,1)$.
Hence, cf. \cite{neumann1}, a plumbing graph of $Y$ is:

\begin{picture}(300,50)(-100,35)
\put(100,60){\circle*{4}}
\put(130,75){\circle*{4}}
\put(130,45){\circle*{4}}
\put(100,60){\line(2,1){30}}
\put(100,60){\line(2,-1){30}}
\put(82,60){\makebox(0,0){$[0,1]$}}
\put(140,75){\makebox(0,0){$-2$}}
\put(140,45){\makebox(0,0){$-2$}}
\end{picture}

\noindent
Here $[0,1]$ denotes a genus $0$ core-space with one disc removed.
The Euler number of
the $S^1$-bundle corresponding to the middle vertex is irrelevant, the resulted
$3$-manifolds with boundary are diffeomorphic with each other
 (hence with $Y$ too).
However,  the restriction of the canonical Seifert fibration to $ \partial Y
\simeq S^1 \times S^1 $ determines an $S^1$--fibration of the boundary.
Moreover, $Y$ admits a  unique  closed Seifert $3$--manifold $ \bar{Y}$,
from which $Y$ can be
 obtained by omitting a tubular neighborhood of a generic fibre (that is,
 $ \bar{Y}$ is obtained by extending the fibration of $\partial Y$ to an
 $S^1$--fibration {\it without any  new special Seifert fibres}.
In this way there is a
canonical choice for the Euler number of the `middle' vertex, which is the
`middle' Euler number of the plumbing graph of $ \bar{Y}$.
We will calculate it below.
Using this  Euler number, the graph also determines
a parametrization (framing) of $ \partial Y\simeq S^1\times S^1$.

\subsection{Homotopical properties of $ Y $}\label{ss:homY}
In order to understand better the structure of $Y$
 we consider its fundamental domain  (in coordinates $(s,t,z)$):

\begin{picture}(300,160)(0,-5)
\put(100,10){\vector(1,0){100}}
\put(100,10){\vector(0,1){100}}
\put(100,110){\vector(1,0){100}}
\put(200,110){\vector(0,-1){100}}
\put(100,60){\vector(1,0){100}}
\put(160,140){\vector(-2,-1){60}}
\put(260,140){\vector(-2,-1){60}}
\put(160,140){\vector(1,0){100}}
\put(200,10){\vector(2,1){60}}
\put(260,140){\vector(0,-1){100}}
\dashline{2}(160,40)(160,140)
\dashline{2}(160,40)(260,40)
\dashline{2}(100,10)(160,40)
\put(100,60){\vector(2,1){60}}
\put(260,90){\vector(-2,-1){60}}
\put(160,90){\vector(1,0){100}}
\dashline{4}(130,75)(230,75)
\put(130,75){\vector(1,0){100}}

\put(151,35){\vector(2,1){10}}
\put(160,136){\vector(0,1){4}}
\put(256,40){\vector(1,0){4}}

\put(125,90){\makebox(0,0){$ z $}}
\put(140,70){\makebox(0,0){$ s $}}
\put(140,88){\makebox(0,0){$ t $}}
\put(130,75){\vector(1,0){20}}\thicklines
\put(130,75){\vector(2,1){15}}\thicklines
\put(130,75){\vector(0,1){20}}\thicklines
\put(115,72){\makebox(0,0){$ \mu $}}
\put(250,80){\makebox(0,0){$ \mu $}}
\put(172,53){\makebox(0,0){$ \lambda $}}
\put(220,97){\makebox(0,0){$ \lambda $}}
\put(180,81){\makebox(0,0){$ \bar{\lambda} $}}

\put(222,27){\makebox(0,0){$ m' $}}
\put(137,35){\makebox(0,0){$ m $}}
\put(235,120){\makebox(0,0){$ m $}}
\put(144,124){\makebox(0,0){$ m' $}}

\put(162,15){\makebox(0,0){$ c_1 $}}
\put(212,135){\makebox(0,0){$ c_2 $}}
\put(100,60){\circle*{4}}
\put(90,63){\makebox(0,0){$ P_0 $}}
\end{picture}

\noindent
where $ s \in [0, \pi] $, $ t \in [- \pi, \pi ] $ and $ z \in I= [-1, 1] $. The original coordinates  are $x= e^{si} $ and $ y= e^{ti} $. Then $Y$ is obtained  by the following  identification of the sides of the cube:
\begin{equation*}
 (0, t, z) \sim ( \pi, -t, -z) \mbox{ and } (s, -\pi, z) \sim (s, \pi, z) \mbox{.}
\end{equation*}
The boundary is
\begin{equation*}
 \partial Y = \frac{[0, \pi] \times [- \pi, \pi ] \times \{-1, 1 \}}{(0,t,\pm 1)
 \sim (\pi, -t, \mp 1), \ (s, -\pi, \pm 1 ) \sim (s, \pi, \pm 1)} \simeq S^1 \times S^1 \mbox{.}
\end{equation*}
%
%\vspace{2mm}
%
The $ S^1 $-action determining the canonical Seifert fibration (\ref{eq:seif}) is
induced by the translation along the $s$--axis. The exceptional fibres are
\begin{equation*}
 \lambda = \{(s, -\pi, 0) \ | \ s \in [0, \pi] \} = \{(s, \pi, 0) \ | \ s \in [0, \pi] \} \ \mbox{ and} \
%\end{equation*}
%\begin{equation*}
 \bar{\lambda} = \{(s, 0, 0) \ | \ s \in [0, \pi] \}  \mbox{.}
\end{equation*}
Any fixed $(t,z) \notin \{ (0, 0), (\pm \pi, 0) \} $ determines a generic fibre in the form
\begin{equation*}
 \{(s, t, z) \ | \ s \in [0, \pi] \} \cup \{ (s, -t, -z) \ | \ s \in [0, \pi ] \} \mbox{,}
\end{equation*}
where $ (0, \pm t, \pm z) $ are glued together with $(\pi, \mp t,  \mp z) $.
For example, $c= c_1 \cup c_2 \subset \partial Y$ is a generic fibre.

The base space $D^2$ of the canonical Seifert-fibration can be represented as

\begin{equation*}
 \frac{\{0 \} \times [- \pi, \pi ] \times [-1, 0]}{(0, - \pi, z)
 \sim (0, \pi, z) \mbox{ , } (0, t, 0) \sim (0, -t, 0)} \mbox{.}
\end{equation*}
Its boundary is the class of $m$, a circle.

Next, we describe the fundamental group and the homology of $ Y $. % and $ \bar{Y} $.
$ Y $ is homotopically equivalent with the Klein bottle $ \mathcal{K} $.
Let us choose the base point $ P_0=(0,-\pi, 0) $. Both four vertexes of
the rectangle representing $ \mathcal{K} $ represents $ P_0$.
Thus the fundamental group of $ Y $ can be presented as
 \begin{equation}\labelpar{eq:fundY}
  \pi_1 (Y) = \langle \mu, \lambda \ | 	\  \mu \cdot \lambda \cdot \mu   = \lambda \rangle \mbox{,}
  \end{equation}
 where $ \mu $ and $ \lambda $ denote also the class of $ \mu $ and $ \lambda $ in $ \pi_1 (Y) = \pi_1 ( \mathcal{K})$;
 cf. with the description (2) from \ref{ss:cY}). A more precise description can be given via the next diagrams,
 provided by the $\{z=0\}$ subspace of $Y$ (which can be identified with ${\mathcal K}$).

\begin{picture}(300,90)(0,-5)
\put(0,60){\vector(1,0){100}}
\put(0,10){\vector(0,1){50}}
\put(0,10){\vector(1,0){100}}
\put(100,60){\vector(0,-1){50}}
\put(50,70){\makebox(0,0){$ \lambda $}}
\put(50,0){\makebox(0,0){$ \lambda $}}
\put(-10,35){\makebox(0,0){$ \mu $}}
\put(110,35){\makebox(0,0){$ \mu $}}
\put(0,35){\vector(1,0){100}}
\put(50,45){\makebox(0,0){$ \bar{ \lambda} $}}

\put(150,60){\vector(1,0){100}}
\put(150,10){\vector(0,1){50}}
\put(150,10){\vector(1,0){100}}
\put(250,60){\vector(0,-1){50}}
\put(200,70){\makebox(0,0){$ \lambda $}}
\put(200,0){\makebox(0,0){$ \lambda $}}
\put(140,35){\makebox(0,0){$ \mu $}}
\put(260,35){\makebox(0,0){$ \mu $}}
\dashline{2}(152,35)(248,35)
\dashline{2}(248,35)(248,60)
\dashline{2}(152,10)(152,35)
\put(248,56){\vector(0,1){4}}
\put(200,45){\makebox(0,0){$ \bar{ \lambda} $}}

\put(300,60){\vector(1,0){100}}
\put(300,10){\vector(0,1){50}}
\put(300,10){\vector(1,0){100}}
\put(400,10){\vector(0,1){50}}
\put(350,70){\makebox(0,0){$ \bar{\lambda} $}}
\put(350,0){\makebox(0,0){$ \lambda $}}
\put(290,35){\makebox(0,0){$ \bar{\lambda} $}}
\put(410,35){\makebox(0,0){$ \lambda $}}
\dashline{2}(300,60)(400,10)
\put(390,15){\vector(2,-1){10}}
\put(355,40){\makebox(0,0){$ \mu $}}
\end{picture}

\noindent
The first diagram shows homological cycles.
 In order to rewrite the fundamental group,
 let $ \bar{ \lambda} $ be the closed path  shown in the second diagram
  by  the dashed line. Then $ \bar{\lambda} = \mu \cdot  \lambda $ in $ \pi_1 ( Y) $. Note that
 $ \lambda^2 = \mu \cdot \lambda \cdot \mu \cdot \lambda =  \lambda \cdot \mu \cdot \lambda \cdot \mu $,
 thus $ \lambda^2 = \bar{\lambda}^2 = \mu^{-1} \cdot \lambda^2 \cdot \mu $
 and this element  commutes with $ \mu $.
The fundamental group can be also presented as
 \begin{equation}\labelpar{eq:fundY2}
  \pi_1 (Y) = \langle \lambda, \bar{\lambda} \ | 	\  \bar{\lambda}^2    = \lambda^2 \rangle \mbox{,}
 \end{equation}
according to the third  picture above.

%\begin{picture}(300,90)(150,-5)
%\put(300,60){\vector(1,0){100}}
%\put(300,10){\vector(0,1){50}}
%\put(300,10){\vector(1,0){100}}
%\put(400,10){\vector(0,1){50}}
%\put(350,70){\makebox(0,0){$ \bar{\lambda} $}}
%\put(350,0){\makebox(0,0){$ \lambda $}}
%\put(290,35){\makebox(0,0){$ \bar{\lambda} $}}
%\put(410,35){\makebox(0,0){$ \lambda $}}
%\dashline{2}(300,60)(400,10)
%\put(396,12){\vector(2,-1){4}}
%\put(355,40){\makebox(0,0){$ \mu $}}
%\end{picture}

 On the other hand, the fundamental group of the boundary is
\begin{equation}\labelpar{eq:pY}  \pi_1 ( \partial Y ) = H_1 (\partial Y , \Z ) \cong \Z \langle m \rangle
\oplus \Z \langle c \rangle \mbox{.} \end{equation}
The $ \partial Y \simeq S^1\times S^1 \hookrightarrow Y $ embedding (which is homotically the same as the orientation covering
$S^1\times S^1\to \mathcal{K} $) induces a monomorphism $ \pi_1 (\partial Y) \to \pi_1 (Y) $. It is determined by the images of the generators, which are
\begin{equation}\label{eq:cmu}
 m \mapsto \mu \mbox{ and } c \mapsto \lambda^2= \bar{ \lambda}^2 \mbox{.}
\end{equation}
A direct computation shows that $ [ \pi_1 (Y) , \pi_1(Y) ] = \Z \langle \mu^2 \rangle $ and
\[
 H_1 (Y, \Z ) \cong \Z \langle \lambda \rangle \oplus \Z_2 \langle \mu \rangle \mbox{.}
\]
Note that $m=m'$ in $H_1(\partial Y,\Z)$, and analysing  $\{s=0\}\subset Y$ one obtains that  $m=-m'$ in $H_1(Y,\Z)$.
Hence the class of $m$ in $H_1(Y,\Z)$ has order 2, it is exactly $\mu$.

The next Lemma shows that the classes $m$ and $c$ in $H_1(\partial Y,\Z)$ have certain universal
properties with respect to the inclusion $\partial Y\subset Y$.

\begin{lemma}\label{lem:UNIV}
(a) $\pm m$ are the unique primitive elements of $H_1(\partial Y,\Z)$
with the property that their doubles vanish in $H_1(Y,\Z)$.

(b) $\pm c$ are the unique primitive elements of $H_1(\partial Y,\Z)=\pi_1(\partial Y)$
whose images in $\pi_1(Y)$ are in the center of $\pi_1(Y)$.
\end{lemma}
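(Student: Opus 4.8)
The plan is to reduce both statements to explicit computations with the data already assembled above: the presentation \eqref{eq:fundY} of $\pi_1(Y)$, the group $H_1(Y,\Z)\cong\Z\langle\lambda\rangle\oplus\Z_2\langle\mu\rangle$, and the inclusion--induced map of \eqref{eq:cmu} sending $m\mapsto\mu$ and $c\mapsto\lambda^2$. Write a general class of $H_1(\partial Y,\Z)=\pi_1(\partial Y)$ as $am+bc$ with $a,b\in\Z$; it is primitive exactly when $\gcd(a,b)=1$. In each part it then suffices to determine which pairs $(a,b)$ realize the stated condition, and to intersect that set with the primitive classes.

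For (a), I would compute the image of $2(am+bc)$ in $H_1(Y,\Z)$. Abelianizing \eqref{eq:cmu}, the image of $am+bc$ is $a\mu+2b\lambda$ (the class of $m$ in $H_1(Y,\Z)$ is $\mu$, of order $2$, and the image of $c$, which is $\lambda^2$, has class $2\lambda$), so the image of its double is $2a\mu+4b\lambda=4b\lambda$ since $2\mu=0$. As $\Z\langle\lambda\rangle$ is torsion free, this vanishes iff $b=0$, and then $\gcd(a,0)=|a|=1$ forces $a=\pm1$. Conversely $2m\mapsto 2\mu=0$, so the classes with the required property are exactly $\pm m$.

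For (b), the first step is to identify the center of $\pi_1(Y)$. Rewriting the relation $\mu\lambda\mu=\lambda$ as $\lambda^{-1}\mu\lambda=\mu^{-1}$ presents $\pi_1(Y)$ as the Klein bottle group, an extension $\Z\langle\mu\rangle\rtimes\Z\langle\lambda\rangle$ in which $\mu$ has infinite order and conjugation by $\lambda$ inverts it; write each element uniquely as $\mu^a\lambda^j$. Such an element commutes with $\lambda$ iff $\mu^{2a}=1$, i.e. $a=0$, and then $\lambda^j$ commutes with $\mu$ iff $j$ is even; hence $Z(\pi_1(Y))=\langle\lambda^2\rangle$. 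By \eqref{eq:cmu} the image of $am+bc$ in $\pi_1(Y)$ equals $\mu^a\lambda^{2b}$ (well defined since $\mu$ commutes with the central element $\lambda^2$), which is central iff $\mu^a$ is central iff $a=0$; primitivity then forces $b=\pm1$. Since $c\mapsto\lambda^2$ is central, the primitive classes with the stated property are exactly $\pm c$.

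The only genuinely non-routine point is the center computation in (b), where one must use that $\mu$ has infinite order in $\pi_1(Y)$ --- despite having order $2$ in $H_1(Y,\Z)$ --- which follows from the standard description of the Klein bottle group as $\Z\rtimes\Z$ (equivalently, from its torsion-freeness). Everything else is bookkeeping with the two free generators $m$ and $c$.
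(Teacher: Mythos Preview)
Your proof is correct and follows essentially the same approach as the paper's, which is extremely terse: it declares (a) ``clear'' and for (b) just states that every element of $\pi_1(Y)$ has the form $\lambda^k\mu^l$ and that the center is $\langle\lambda^2\rangle$. You supply exactly the details behind these claims, including the explicit check that the image of $am+bc$ is $\mu^a\lambda^{2b}$ and the center computation via the semidirect product structure of the Klein bottle group; there is no substantive difference in method.
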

\begin{proof} (a) is clear. For (b) first note
 that any element of $\pi_1(Y)$ can be written in the from $\lambda^k\mu^l$ for some $k,l\in\Z$, and then using this one
 verifies that
 the center of $\pi_1(Y)$ is $\langle \lambda^2\rangle$.
\end{proof}
\subsection{Homological properties of  $\bar{Y}$}\label{ss:bary} The closed Seifert
3--manifold  $\bar{Y}$ considered in \ref{ss:cY}(4) is constructed  as follows.
First we consider a new disc $D_{new}^2$ and the trivial fibration $D_{new}^2\times S^1$.
Then we past  $m$ with the boundary of $D_{new}^2$
 and we extend the canonical Seifert--fibration of $Y$ above this  disc (as base space)
of the  trivial fibration $D_{new}^2\times S^1$. This leads to the Seifert fibred closed manifold
\begin{equation}\labelpar{eq:barY}
 \bar{Y} = \frac{Y \cup
  (D_{new}^2 \times S^1)}{\partial Y\sim \partial D_{new}^2 \times S^1,\  m\sim \partial D_{new}^2 \times *, \
  c\sim *\times  S^1 } \mbox{.}
\end{equation}
$ H_1(\bar{Y},\Z) $ can be determined by  the Mayer-Vietoris sequence of the decomposition (\ref{eq:barY}):
\[
 \begin{array}{ccccccc}
  H_1( \partial Y, \Z) & \to & H_1 (Y, Z) \oplus H_1 (D_{new}^2 \times S^1, \Z) & \to & H_1 ( \bar{Y}, \Z) & \to & 0 \\
  \|            &     & \parallel                                   &   &  \parallel         &      &  \\
  \Z \langle m \rangle \oplus \Z \langle c \rangle & \to & \Z \langle \lambda \rangle \oplus \Z_2 \langle \mu \rangle \oplus \Z \langle c' \rangle & \to & H_1 ( \bar{Y}, \Z) & \to & 0 \\
 %( m,c) & \mapsto & ( \mu, 2 \lambda + c') & & & & \\
 % c & \mapsto &  2 \lambda + c'  & & & & \\
 \end{array}
\]
where $ m \mapsto \mu$ and $c\mapsto  2 \lambda + c'$.
Thus \begin{equation}\label{eq:Z}
 H_1 ( \bar{Y}, \Z) \cong \Z \langle \lambda \rangle .\end{equation}

\subsection{A plumbing graphs of $Y$ and $\bar{Y}$}\label{ss:plY}
By \cite{neumann2},  $ \bar{Y} $ has a plumbing graph $ G $ of the form

 \begin{picture}(300,50)(-100,35)
 \put(100,60){\circle*{4}}
 \put(130,75){\circle*{4}}
 \put(130,45){\circle*{4}}
 \put(100,60){\line(2,1){30}}
 \put(100,60){\line(2,-1){30}}
 \put(90,60){\makebox(0,0){$ e $}}
 \put(145,75){\makebox(0,0){$-2$}}
 \put(145,45){\makebox(0,0){$-2$}}
 \end{picture}

\noindent
(see also the discussion from \ref{ss:cY}(4))
and the Euler number $e$ should be chosen such that $ H_1 (M^3(G), \Z ) \cong \Z$
(cf. (\ref{eq:Z})).
%where $M^k(G)$ denotes the plumbed $k$--manifold associated with  $G$,$k=3$ or 4.
Here $ M^3(G)  $ denotes the plumbed $3$--manifold associated with the graph $G$, it is the boundary of $ M^4 (G) $, the plumbed $4$--manifold associated with $G$. %Consider the Mayer-Vietoris sequence of the pair $ (M^4 (G), M^3(G))$
%\[
% \begin{array}{ccccccc}
%   H_2 (M^4(G), \Z) & \rightarrow & H_2(M^4(G), M^3 (G), \Z) & \to & H_1 (M^3(G), \Z) & \to & 0 \\
% \end{array}
%\]
%where the first map is given by the intersection matrix. Hence $ H_1 (M^3(G), \Z) \cong \Z $ if and only if
%\[
% \det \left(
% \begin{array}{ccc}
% -2 & 0 & 1 \\
% 0 & -2 & 1 \\
% 1 & 1 & x \\
% \end{array}
% \right)
% =0 \mbox{,}
%\]
%thus $ e= -1 $.
This (via the long cohomological exact sequence of the pair
$(M^4(G),M^3(G))$) imposes the degeneracy of the intersection matrix of the plumbing (cf. e.g.
 \cite[15.1.3]{NSz}).  Hence $e=-1$.
 In particular, $\bar{Y}$ (without its Seifert fibration structure) is diffeomorphic to
 $S^1\times S^2$ (which also shows that $\bar{Y}$ admits an orientation reversing diffeomorphism).

 Furthermore,  consider the graph

 \begin{picture}(300,50)(-100,35)
 \put(100,60){\circle*{4}}
 \put(100,60){\vector(-1,0){34}}
 \put(130,75){\circle*{4}}
 \put(130,45){\circle*{4}}
 \put(100,60){\line(2,1){30}}
 \put(100,60){\line(2,-1){30}}
 \put(95,70){\makebox(0,0){$-1$}}
 \put(145,75){\makebox(0,0){$-2$}}
 \put(145,45){\makebox(0,0){$-2$}}
 \end{picture}

\noindent
The arrow denotes a knot $ K \simeq S^1 \subset  \bar{Y} $, which is a generic
 $S^1$--fibre associated with the middle vertex by the plumbing construction of $\bar{Y}$. %, and it is also a fibre of the canonical Seifert fibration.
 Let $ N(K)^\circ  $ be an open  tubular neighborhood of $ K $ in $ \bar{Y} $.
 Then
 %We proved the following:
%
%\begin{prop} (a)
$ Y \simeq \bar{Y} \setminus N(K)^\circ  $, and
%(b) T
the induced (singular/Seifert) $S^1$--fibration associated with the
 middle vertex (by the plumbing construction) on $Y$ agrees with the canonical Seifert fibration of $Y$.
%\end{prop}
The Euler number $-1$ of the middle vertex %is irrelevant for point (a). It just
determines a parametrization (framing)
of $ \partial Y \simeq S^1 \times S^1 $.

%\subsection{}
We can present $ \pi_1 (Y) $ also from the plumbing graph using the description of \cite{mumford}. Let $ \lambda $, $ \bar{ \lambda} $ and $c$ be
oriented $S^1$--fibres associated with the three vertices provided by the plumbing construction (and extended by convenient
connecting paths to  a base point as in \cite{mumford}).
Next, let $m$ be the meridian of $K$  corresponding to the arrowhead
(and extended by a convenient path to the base point).

\begin{picture}(300,65)(-100,26)
\put(100,60){\circle*{4}}
\put(100,60){\vector(-1,0){34}}
\put(130,75){\circle*{4}}
\put(130,45){\circle*{4}}
\put(100,60){\line(2,1){30}}
\put(100,60){\line(2,-1){30}}
\put(95,70){\makebox(0,0){$-1$}}
\put(145,75){\makebox(0,0){$-2$}}
\put(145,45){\makebox(0,0){$-2$}}
\put(95,50){\makebox(0,0){$c$}}
\put(130,85){\makebox(0,0){$ \lambda $}}
\put(130,35){\makebox(0,0){$ \bar{ \lambda} $}}
\put(66,50){\makebox(0,0){$m$}}
\end{picture}

\noindent
% We also use these symbols to denote its class in $ \pi_1 (\bar{ Y}) $ and in
%$ \pi_1 (Y) $ (extended with a path from and to the base point).
Then, by \cite{mumford},  there is a choice of the connecting pathss such that
$ \pi_1 (Y) $ is generated by $\lambda, \ \bar{\lambda}, \ c$ and $m$, and they satisfy the relations $\lambda^2=\bar{\lambda}^2=c$, and $c=m\lambda\bar{\lambda}$.
This is compatible with the description from subsection \ref{ss:homY},
cf. (\ref{eq:fundY}) and (\ref{eq:cmu}).

Note that by plumbing calculus (cf. \cite{neumann1})
one has the equivalence of plumbed manifolds (where $\widetilde{\Gamma}$
is any graph):

\begin{picture}(300,60)(-30,30)
\put(20,40){\framebox(50,40)}
\put(40,60){\makebox(0,0){$\widetilde{\Gamma}$}}
 \put(100,60){\circle*{4}}
 \put(100,60){\line(-1,0){34}}
 \put(130,75){\circle*{4}}
 \put(130,45){\circle*{4}}
 \put(100,60){\line(2,1){30}}
 \put(100,60){\line(2,-1){30}}
 \put(95,70){\makebox(0,0){$-1$}}
 \put(145,75){\makebox(0,0){$-2$}}
 \put(145,45){\makebox(0,0){$-2$}}

 \put(220,40){\framebox(50,40)}
\put(240,60){\makebox(0,0){$\widetilde{\Gamma}$}}
 \put(300,60){\circle*{4}}
 \put(300,60){\line(-1,0){34}}
 \put(330,75){\circle*{4}}
 \put(330,45){\circle*{4}}
 \put(300,60){\line(2,1){30}}
 \put(300,60){\line(2,-1){30}}
 \put(295,70){\makebox(0,0){$1$}}
 \put(345,75){\makebox(0,0){$2$}}
 \put(345,45){\makebox(0,0){$2$}}

 \put(185,60){\makebox(0,0){$\simeq$}}
 \end{picture}

Hence, $Y$ or $-Y$ spliced along $K$ to any 3--manifold
eventuate  diffeomorphic manifolds.

  % \[
 % \pi_1 (\bar{Y}) = \langle \lambda, \bar{\lambda}, c \ | \ \lambda^2 = \bar{\lambda}^2 = \lambda \cdot \bar{\lambda} = c \rangle \mbox{,}
% \]
% from which $ \lambda= \bar{ \lambda}$ follows, and $ \pi_1 (M^3(G)) = \Z \langle \lambda \rangle = \Z \langle \bar{ \lambda} \rangle $.
%
% Similarly,
% \[
% \pi_1 (Y) = \langle \lambda, \bar{\lambda}, c, m \ | \ \lambda^2 = \bar{\lambda}^2 = \lambda \cdot \bar{\lambda} \cdot m = c \rangle \mbox{,}
% \]
% thus $ \lambda = \bar{ \lambda} \cdot m $ and
% \[ \pi_1 ( Y) = \langle \lambda, \bar{\lambda} \ | \lambda^2 = \bar{\lambda}^2 \rangle \mbox{ .}
% \]

 \section{The boundary of the Milnor fibre}

\subsection{} Let $ F = f^{-1} ( \delta ) \cap B^6_{ \epsilon} $ be the
Milnor fibre of $ f $, where $ \delta \in \C^* $, $| \delta | \ll \epsilon $.
We wish to construct the $3$--manifold $ \partial F = f^{-1} ( \delta )
\cap S^5_{ \epsilon} $ as a surgery of $ S^3 $ along the link $ L $.

Let $ N_i $ be a sufficiently small
tubular neighborhood of $ L_i $ in $ S^3 $.
%The choice of $ N_i $ will be specified later.
For each $j=\{i, \sigma(i) \} $ we define $X_j $ as
\begin{equation}\labelpar{eq:xj}
 X_j =   \left\{ \begin{array}{ccc} S^1\times S^1 \times I & \mbox{ if} &  i \neq \sigma(i) \\
                  Y & \mbox{ if} &  i = \sigma(i), \\
                 \end{array} \right.
                 \end{equation}
where $ Y $ is the $3$--manifold described in Section \ref{s:Y}. Recall
 that $ \partial Y \simeq S^1\times S^1$.

\begin{prop}\label{pr:gl} One has an orientation preserving diffeomorphism
 \begin{equation}\labelpar{eq:mb}  \partial F \simeq \left(\mathfrak{S}^3 \setminus
 \bigcup_{i=1}^l {\rm int}(N_i)  \right)\cup_{ \phi} \left(\bigcup_{j \in J} X_j \right) \mbox{,} \end{equation}
 where $ \phi : \partial (\mathfrak{S}^3\setminus \cup_i\,{\rm int}(N_i))\to
  -\partial (\cup_{j\in J}X_j)$ is a collection $( \phi_j)_{j \in J}$ of diffeomorphisms
% $ \phi_{ \{ i, \sigma(i) \} }: -\partial N_i \cup -\partial N_{\sigma(i)} \to -\partial X_{ \{ i, \sigma(i) \}}$.
 % That is
 \[ \phi_{\{i, \sigma(i)\} }: \left\{ \begin{array}{ccc} -\partial N_i \cup -\partial N_{\sigma(i)}
  \to -\partial
 (S^1\times S^1 \times I)  & \mbox{ if } & i \neq \sigma(i) \\
- \partial N_i \to -\partial Y\hspace{1cm} & \mbox{ if } & i = \sigma(i). \\
                                \end{array} \right.    \]
\end{prop}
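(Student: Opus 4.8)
The plan is to build $\partial F$ explicitly as a union of two naturally-defined pieces determined by the geometry of $\Phi$, and then identify each piece with the manifolds appearing on the right-hand side of \eqref{eq:mb}. First I would fix a stabilization (disentanglement) $\Phi_t$ of $\Phi$ and transfer the problem to the target: since $f^{-1}(\delta)$ for $|\delta|\ll\epsilon$ is diffeomorphic to the image of $\Phi_t$ restricted to a suitable ball, and the image of $\Phi|_{\mathfrak S^3}$ is the singular boundary, one has $\partial F = f^{-1}(\delta)\cap S^5_\epsilon$ presented as a small perturbation of $X=\mathrm{im}(\Phi)$ near its singular locus. Away from a tubular neighborhood $\Upsilon$ of $\Sigma$ in $S^5_\epsilon$, the hypersurface $f^{-1}(\delta)$ is a small pushoff of the smooth part of $X\cap S^5_\epsilon$, hence is diffeomorphic (via $\Phi|_{\mathfrak S^3}$, which is an embedding off $L$) to $\mathfrak S^3\setminus\bigcup_i\mathrm{int}(N_i)$, where $N_i$ is the preimage under $\Phi$ of the appropriate half of a tube around $\Upsilon_j$. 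This gives the first piece and the gluing domain of $\phi$.

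Next I would analyze what happens over a tube $N(\Upsilon_j)\cong S^1\times D^2$ around a component $\Upsilon_j$ of $\Upsilon$. The transversal type of $f$ along $\Sigma$ at such a point is (generically) an ordinary double point $uv=0$ in the normal $D^2$-slice, whose Milnor fibre is $uv=\delta$, an annulus $S^1\times I$. As one travels once around $\Upsilon_j$, this annulus fibre is carried back to itself by the \emph{vertical monodromy}. When $i\neq\sigma(i)$, the two local branches of $X$ along $\Sigma$ come from distinct components $D_i,D_{\sigma(i)}$ of the double curve, the monodromy is trivial (the covering $\Phi|_{L_i}$ being trivial over $\Upsilon_j$), so the smoothing over $N(\Upsilon_j)$ is the product $S^1\times(S^1\times I)=S^1\times S^1\times I$. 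When $i=\sigma(i)$, the two branches are interchanged by the deck transformation of the nontrivial double cover $\Phi|_{L_i}\colon L_i\to\Upsilon_j$, so going once around $\Upsilon_j$ swaps the two boundary circles of the annulus and reverses it; this monodromy is exactly $(y,z)\mapsto(\bar y,-z)$ combined with the order-two base identification $x\mapsto -x$. Comparing with realization (1) of $Y$ in \ref{ss:cY}, the resulting 3-manifold over $N(\Upsilon_j)$ is precisely $Y$. That identifies $X_j$ in each case, using the explicit definition \eqref{eq:xj}.

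Finally I would assemble the surgery: $\partial F$ is the union of the complement piece $\mathfrak S^3\setminus\bigcup_i\mathrm{int}(N_i)$ and the pieces $X_j$, glued along their common boundary tori. The gluing map $\phi_j$ is the one induced by $\Phi$ near $L_i$ on each boundary torus: in the split case $\phi_{\{i,\sigma(i)\}}$ identifies $-\partial N_i\sqcup-\partial N_{\sigma(i)}$ with the two boundary tori of $S^1\times S^1\times I$, and in the self-paired case $\phi_{\{i,i\}}$ identifies $-\partial N_i$ with $-\partial Y$ (here the fact that $\partial Y\cong S^1\times S^1$ from \ref{ss:cY} is what makes this possible, and the involution quotient folds the two local boundary tori of the swapped branches into one). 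Orientation-preservation is checked by comparing the chosen orientations (induced from the complex structure on $F$, resp. the natural orientations of $S^1$, $I$ and of $Y$ fixed in Section \ref{s:Y}), which accounts for the minus signs in the statement of $\phi$. The main obstacle, and the step that requires genuine care rather than bookkeeping, is the self-paired case $i=\sigma(i)$: one must verify that the geometric monodromy of the transversal-slice smoothing as one loops $\Upsilon_j$ — including the way the two sheets of $X$ are interchanged by the double cover and how this interacts with the $\mathbb Z_2$-action defining $Y$ — is genuinely conjugate to the model involution $(x,y,z)\mapsto(-x,\bar y,-z)$, and not merely to something abstractly homotopic to it; this is where the finite determinacy of $\Phi$ (equivalently, reducedness of $D$, and the transversality of the immersion on $\mathfrak S^3$) is essential, since it guarantees the transversal type is an ordinary node with the stated monodromy.
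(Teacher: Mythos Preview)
Your approach is essentially the paper's: split $\partial F$ along tubular neighbourhoods $N(\Upsilon_j)\subset S^5_\epsilon$; identify $\partial F\setminus\cup_j N(\Upsilon_j)$ with $K\setminus\cup_j N(\Upsilon_j)$ by transversality and then with $\mathfrak S^3\setminus\cup_i N_i$ via the embedding $\Phi|_{\mathfrak S^3\setminus L}$; and recognise $\partial F\cap N(\Upsilon_j)$ as an $(S^1\times I)$-bundle over $\Upsilon_j\simeq S^1$ whose vertical monodromy is either trivial or the swap $(\alpha,t)\mapsto(\bar\alpha,-t)$, according to whether $\sigma(i)\neq i$ or $\sigma(i)=i$. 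The paper credits the general decomposition to Siersma \cite{Si} and \cite{NSz}, and handles the monodromy in one line by noting that an orientation-preserving self-diffeomorphism of an annulus is determined up to isotopy by its action on $\pi_0(\partial(S^1\times I))$; your final paragraph over-worries this step.

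One point to fix: your opening claim that $f^{-1}(\delta)$ is diffeomorphic to the image of a stabilisation $\Phi_t$ is false and should be deleted. The disentanglement $\mathrm{im}(\Phi_t)$ is a \emph{singular} surface (it carries cross-caps, and in general triple points), and its boundary is the immersed image $\Phi(\mathfrak S^3)\subset S^5_\epsilon$, not the smooth $3$-manifold $\partial F$; the paper itself stresses this distinction in the introduction. Fortunately you never use this assertion---everything afterwards works directly with $f^{-1}(\delta)\cap S^5_\epsilon$---so simply drop the disentanglement detour and start from the transversality of $\partial N(\Upsilon_j)$ with $K$.
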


\begin{proof} The decomposition follows from the general decomposition proved in \cite{Si}, see also
 \cite[2.3]{NSz}. For the convenience of the reader we sketch the construction.
 Recall that $\Upsilon_j\subset S^5_\epsilon$ is the link of the component $\Sigma_j$ of $\Sigma$,
 $j=\{i,\sigma(i)\}\in J$.
 Consider a sufficiently small tubular  neighborhood $N(\Upsilon_j)$ of it in $S^5_\epsilon$.
 We can assume that $\Phi^{-1}(N(\Upsilon_j))=N_i\cup N_{\sigma(i)}$. Furthermore, for $\epsilon$ small,
 the intersection of $\partial N(\Upsilon_j)$ with $K:=X\cap S^5_\epsilon$ is transversal.
 Therefore, for $0<|\delta|\ll \epsilon$, the intersection of $\partial N(\Upsilon_j)$ with $\partial F$ is
 still transversal in $S^5_\epsilon$, and, in fact, $K\setminus \cup_j N(\Upsilon_j)$ is diffeomorphic with
 $\partial F\setminus
 \cup_j N(\Upsilon_j)$. But, the former space can be identified via $\Phi$ by $\mathfrak{S}^3\setminus \cup_i
 (N_i\cup N_{\sigma(i)})$.
 This is the space in the first parenthesis of (\ref{eq:mb}).

 The second one  is a union of spaces of type $X_j:=\partial F\cap N(\Upsilon_j)$,
 which fibres over $\Upsilon_j\simeq S^1$.  The fibre of the fibration is the
 Milnor fibre of the corresponding transversal plane curve singularity (of $\Sigma_j$).
 Since the transversal type is $A_1$, this fibre is $F_j:=S^1\times I$. The monodromy of
 the fibration is the so
 called {\it geometric vertical monodromy } of the transversal type, it is orientation
 preserving  self-diffeomorphism of $ S^1 \times I $. If it does not permute
 the two components of $\partial F_j$ then it preserves the orientation of
 $I$, hence of $S^1$ too, hence up to isotopy it is the identity. If it
 permutes the components of $\partial F_j$ then
up to isotopy it is
 $ (\alpha, t) \to (\bar{ \alpha}, - t)$, where $ (\alpha, t ) \in S^1 \times I $.
  %and $ \bar{ \alpha} $ denotes the image of $ \alpha \in S^1 $ along an axial reflection (i.e. complex conjugation).
The two types of vertical monodromies provide the two choices of  $ X_j $ in formula (\ref{eq:xj}), cf. description (1) of $Y$  in
subsection \ref{ss:cY}.
\end{proof}

\subsection{Preliminary discussion regarding the gluing}\label{ss:gluinguj}
Our next aim is to describe the gluing functions $ \phi_j $. In both cases two tori must be glued:  if $ i\neq\sigma(i)$
then basically one should identify  $ \partial N_i $ and $ -\partial N_{\sigma (i)}$,
 otherwise $\partial Y$ and $\partial N_i$.
Up to diffeotopy an orientation reversing diffeomorphism between tori is given by an invertible
 $2 \times 2 $ matrix over $ \Z $ with determinant $-1$.
 It turns out that in our cases all these gluing matrices have the form
\begin{equation}\label{eq:glue}
\left( \begin{array}{cc}
  -1 & n_j\\
  0 & 1 \\
 \end{array} \right)
\end{equation}
 \noindent hence its only relevant entry is
 the off--diagonal one.

 This integer will be determined by a newly
 introduced invariant, the {\it vertical index},  associated with each
$j\in J$.
 This is done using a special germ $H:(\C^3,0)\to (\C,0)$,
which will have a double role.
First, it provides some kind of framing along $\Sigma\setminus \{0\}$,
and also helps to
identify generators from the boundaries of  $K\setminus \cup_j N(\Upsilon_j)$ and  $\mathfrak{S}^3\setminus \cup_i (N_i\cup N_{\sigma(i)})$
respectively (constructed in two different levels: in the target and in the source of $\Phi$).

We will use three parametrizations of $ \partial N_i \approx S^1 \times S^1 $ with the same meridian but different longitudes.
The topological longitude is the usual knot--theoretical Seifert--framing of
 $ L_i \subset S^3 $. The \emph{resolution longitude}
 is determined via  a good embedded resolution of $ (D, 0) \subset ( \C^2, 0 ) $,
  it creates the bridge with the decorations and the combinatorics of
  the resolution graph. Finally,
the \emph{sectional longitude} depends on $ H$ and it will be used for describing the gluing. In fact, the sectional longitude allows us to compare the source and the target of $ \Phi $: being defined by the geometry of $H$, the function $H$ and
its pull-back $ \Phi^* H $  plays the role of transportation of the invariants from
$\C^3 $ level to $\C^2$ level.

\subsection{The local form of $f$ along $ \Sigma $}\label{ss:local}
In the sequel we will use the notation $\Sigma^*=\Sigma\setminus \{0\}$ and $\Sigma^*_j=\Sigma_j\setminus \{0\}$.
 Recall that in a small neighborhood (in $\C^3$) of any point $p\in \Sigma^*$ the
 space $(X,0)$ has two local
 components, both smooth and  intersecting each other transversally.

A more precise local description along $\Sigma^*$  is the following.
Let us fix a point $p_0\in \Sigma_j^*$ and let $U_0$ be a small
neighborhood of $p_0$ in $\C^3$. In $U_0$ the function $f$ is a product $f_1\cdot f_2$,
where both $f_n$ are holomorphic, $\{f_n=0\}$ are smooth
 and intersect each other transversally.
 (The intersection is $\Sigma^*\cap U_0$; later the fact that at $p_0$ the local parametrization  of
  $\Sigma^*\cap U_0$ together with $f_1$ and $f_2$ might serve as local coordinates will be exploited further.)

 $f_1$ and $f_2$  are well--defined up to a multiplication
 by an invertible holomorphic  function $\iota$ of
 $U_0$; that is, $(f_1,f_2)$ can be replaced by $(\iota  f_1, \iota^{-1}f_2)$.
% and if  $i=\sigma(i)$ then $(f_1,f_2)$ by $(f_2,f_1)$.
 At any point $p\in \Sigma^*_j\cap U_0$ the linear term of $f_n$, $n\in\{1,2\}$,
(say, in the Taylor expansion) is $T_1(f_n)= \sum _{k=1}^3 u_{nk}(p)(x_k-p_k)$, where
$(x_1,x_2,x_3)$ are the fixed coordinates of $(\C^3,0)$.
Let us code this in the non--zero vectors $u_n(p):=(u_{n1}(p),u_{n2}(p), u_{n3}(p))$.
Hence, at any $p\in \Sigma^*_j\cap U_0$ we have two vectors
$u_1(p)$ and $u_{2}(p)$ well--defined up to
multiplication by $\iota|_{\Sigma_j^*\cap U_0}$ (in the sense described above).
%and in the case  $i=\sigma(i)$ up to a permutation.
Their classes $ [u_n(p)]\in
\C \mathbb {P} ^2$ for $p\in \Sigma^*_j\cap U_0 $
 are independent of the $\iota$--ambiguity, hence are well--defined elements.
%up to permutation whenever $i=\sigma(i)$.
In particular, they determine a global pair of elements
$[u_1(p)]$ and  $[u_2(p)]\in
\C \mathbb {P} ^2$ for $p\in \Sigma^*_j $, well--defined whenever $i\not=\sigma(i)$, and
well--defined up to permutation whenever $i=\sigma(i)$.
%(In other words, if $i=\sigma(i)$, then $\Sigma^*_j\ni p\mapsto \{[u_1(p)],
%[u_2(p)]\}$ is a two--valued function.)

In fact, we can do even more: there exists a splitting of $f$ into product $f_1\cdot f_2$ along $\Sigma^*_j$ without any invertible element ambiguity (but preserving the
permutation ambiguity whenever $i=\sigma(i)$).

Indeed, assume that we are in the trivial covering ($i\not=\sigma(i)$) case, and
let us cover $\Sigma_j^*$ by small discs $\{U_\alpha\}_\alpha$ such that
on each $U_{\alpha}$ we can fix a splitting $f(p)=f_{1,\alpha}(p)
\cdot f_{2,\alpha}(p)$, $p\in U_\alpha$. For any intersection $U_{\alpha\beta}=
U_{\alpha}\cap U_{\beta}$ the two splittings can be compared: we define
$\iota_{\alpha\beta}\in \calO(U_{\alpha\beta})^*$ by $f_{1,\alpha}|_{U_{\alpha\beta}}=\iota_{\alpha\beta}\cdot
f_{1,\beta}|_{U_{\alpha\beta}}$. From this definition follows that
$\{\iota_{\alpha\beta}\}_{\alpha, \beta}$ form a \v{C}ech 1--cocycle.
\begin{lemma}
$H^1(\Sigma^*_j,\calO_{\Sigma^*_j}^*)=0$.
\end{lemma}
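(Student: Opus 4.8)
The plan is to compute $H^1(\Sigma_j^*,\calO^*_{\Sigma_j^*})$ via the exponential sheaf sequence together with the fact that $\Sigma_j^*$ is a punctured smooth curve germ, hence a Stein (in fact non-compact) Riemann surface. First I would recall that $(\Sigma_j,0)$ is an irreducible curve germ, so its normalization is $(\C,0)$ and $\Sigma_j^*$ is, after normalization (which is a homeomorphism here since the transversal type being $A_1$ forces $\Sigma$ to be smooth — indeed in our setting $D$ is reduced and $\Sigma$ is the image of the double locus, but in general one should pass to the normalization $\widetilde{\Sigma_j^*}\to\Sigma_j^*$ and note the pushforward of $\calO^*$ behaves well). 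So effectively $\Sigma_j^*$ is biholomorphic to a punctured disc $\Delta^* = \{0<|t|<\rho\}$, which is an open (non-compact) Riemann surface.

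The key exact sequence is the exponential sequence of sheaves on $\Sigma_j^*$:
\begin{equation*}
0\to \Z \to \calO_{\Sigma_j^*}\xrightarrow{\exp(2\pi i\,\cdot\,)} \calO^*_{\Sigma_j^*}\to 0,
\end{equation*}
giving the long exact cohomology sequence
\begin{equation*}
H^1(\Sigma_j^*,\calO_{\Sigma_j^*})\to H^1(\Sigma_j^*,\calO^*_{\Sigma_j^*})\to H^2(\Sigma_j^*,\Z).
\end{equation*}
The plan is then to kill both outer terms. For the first: $\Sigma_j^*$ is a non-compact Riemann surface, hence Stein (every non-compact Riemann surface is Stein, by Behnke--Stein), so Cartan's Theorem B gives $H^1(\Sigma_j^*,\calO_{\Sigma_j^*})=0$. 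Alternatively, and more elementarily for a punctured disc, one can invoke that $H^q(\Delta^*,\calO)=0$ for $q\ge 1$ directly. For the second term: $\Sigma_j^*\simeq \Delta^*$ is homotopy equivalent to $S^1$, so $H^2(\Sigma_j^*,\Z)=0$. Hence $H^1(\Sigma_j^*,\calO^*_{\Sigma_j^*})=0$, which is exactly the assertion.

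The main subtlety — the step I would be most careful about — is the reduction to a smooth model. If one takes the statement at face value for an arbitrary reduced curve germ $\Sigma_j$, then $\Sigma_j^*$ need not be a manifold, and one must either (i) observe that in the present geometric situation the relevant $\Sigma_j$ is smooth (the transversal type along $\Sigma_j^*$ is $A_1$, i.e. two smooth sheets meeting transversally, and $\Sigma_j^*$ itself is the intersection locus, which is smooth of dimension $1$), or (ii) work with the normalization map $\nu:\widetilde{\Sigma_j^*}\to\Sigma_j^*$, which is finite, note $R^q\nu_*\calO^*=0$ for $q>0$ and that $\nu_*\calO^*_{\widetilde{\Sigma_j^*}}$ differs from $\calO^*_{\Sigma_j^*}$ only by a skyscraper contribution supported at isolated points (which contributes nothing to $H^1$ over a curve), so the Leray spectral sequence reduces the computation to the smooth case. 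Either way the conclusion is the vanishing $H^1(\Sigma_j^*,\calO^*_{\Sigma_j^*})=0$, and the cocycle $\{\iota_{\alpha\beta}\}$ is therefore a coboundary, which is precisely what is needed to glue the local splittings $f=f_{1,\alpha}f_{2,\alpha}$ into a global splitting along $\Sigma_j^*$.
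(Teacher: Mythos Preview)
Your proof is correct and follows essentially the same approach as the paper: the exponential exact sequence together with $H^1(\Sigma_j^*,\calO_{\Sigma_j^*})=0$ from Cartan's Theorem~B (since $\Sigma_j^*$ is Stein). You are in fact slightly more careful than the paper, which only says ``it is enough to prove the vanishing $H^1(\Sigma_j^*,\calO_{\Sigma_j^*})=0$'' without explicitly noting that one also needs $H^2(\Sigma_j^*,\Z)=0$; your observation that $\Sigma_j^*\simeq\Delta^*$ has the homotopy type of $S^1$ supplies this missing step, and your remark on the smoothness of $\Sigma_j^*$ (guaranteed here by the transversal $A_1$ type) is a reasonable clarification.
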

\begin{proof}
From the exponential exact sequence $0\to \Z \to \calO\to \calO^*\to 0$ over $\Sigma_j^*$,
we get that it is enough to prove the vanishing
$H^1(\Sigma^*_j,\calO_{\Sigma^*_j})=0$, a fact which follows from Cartan's Theorem, since $\Sigma_j^*$ is Stein.
\end{proof}

Since $H^1(\Sigma^*_j,\calO_{\Sigma^*_j}^*)=0$, the cocycle
$\{\iota_{\alpha\beta}\}_{\alpha, \beta}$  is a coboundary.
This means that we can find invertible functions $\iota_\alpha $ on
each $U_\alpha$ such that on $U_{\alpha\beta}$ one has
$\iota_{\alpha}|_{U_{\alpha\beta}}=\iota_{\alpha\beta}\cdot
\iota_{\beta}|_{U_{\alpha\beta}}$. This means that  the local functions
 $\widetilde{f}_{1, \alpha}:=f_{1,\alpha }\cdot \iota_{\alpha}^{-1}$,
  $\widetilde{f}_{2, \alpha}:=f_{2,\alpha }\cdot \iota_{\alpha}$ on $U_{\alpha}$
  provide a splitting (that is, $f(p)=\widetilde{f}_{1,\alpha}(p)
\cdot \widetilde{f}_{2,\alpha}(p)$, $p\in U_\alpha$), but in this new situation
the local splittings glue globally: $\widetilde{f}_{1,\alpha}|_{U_{\alpha\beta}}=
\widetilde{f}_{1,\beta}|_{U_{\alpha\beta}}$.
If $i=\sigma(i)$ then we repeat the proof on $D_i$.

 \subsection{The special germ $H$} Next, we treat the `aid'--germ $H$.

\begin{defn}\label{def:H} Let us fix $\Phi, \ f, \ \Sigma$ as above.
 A germ $ H: ( \C^3, 0) \to ( \C ,0) $ is called \emph{transversal section} along $ \Sigma $ if
 $ \Sigma \subset H^{-1} (0) $, $H^{-1}(0)$ at any point of $\Sigma^*$ is smooth and intersects both local
 components of $(X,0)$ transversally.
\end{defn}

We claim that  transversal sections  always exist.

\begin{prop}\labelpar{pr:H}
 There exist  complex numbers  $a_1, a_2, a_3 $ such that
 $H= a_1 \partial_{x_1} f + a_2 \partial_{x_2} f + a_3 \partial_{x_3} f $
 is a transversal section.
 %Moreover, we can even assume that $H=0$ intersects $f=0$ transversally off $\Sigma$.
\end{prop}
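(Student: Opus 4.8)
The plan is to work locally along $\Sigma^*$ and then observe that the local conditions are open and dense among the choices of $(a_1,a_2,a_3)\in\C^3$, so that a generic choice works globally. Fix a point $p\in\Sigma^*$. Near $p$ we have the local splitting $f=f_1\cdot f_2$ of \ref{ss:local}, with $\{f_1=0\}$ and $\{f_2=0\}$ smooth, meeting transversally along $\Sigma^*$. From $\partial_{x_k}f=f_1\,\partial_{x_k}f_2+f_2\,\partial_{x_k}f_1$ we get, at a point $p\in\Sigma^*$ (where $f_1(p)=f_2(p)=0$),
\[
 \partial_{x_k}f(p)=f_1(p)\,\partial_{x_k}f_2(p)+f_2(p)\,\partial_{x_k}f_1(p)=0,
\]
so $\Sigma\subset H^{-1}(0)$ automatically for \emph{every} choice of $(a_1,a_2,a_3)$, and moreover the linear (first-order) part of $\partial_{x_k}f$ at $p$, transverse to $\Sigma$, is governed by the second derivatives, which by the transversality of $\{f_1=0\}$ and $\{f_2=0\}$ can be read off the Hessian. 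Concretely, writing local coordinates $(w_1,w_2,w_3)$ at $p$ with $\Sigma^*=\{w_1=w_2=0\}$ and $f_n=w_n\cdot(\text{unit})$, one computes $df$ vanishes on $\Sigma$, and the $2$-jet of $\partial_{w_k}f$ determines a linear functional on the normal plane $\langle w_1,w_2\rangle$; the key point is that $\partial_{x_1}f,\partial_{x_2}f,\partial_{x_3}f$ span (at least) the conormal directions of both $\{f_1=0\}$ and $\{f_2=0\}$, because their common zero locus is exactly $\Sigma$ (the reduced singular locus), so no nontrivial linear combination can be tangent to \emph{both} branches simultaneously at a generic point.

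Next I would turn ``smooth and transverse at each point of $\Sigma^*$'' into a genericity statement. For a fixed $p\in\Sigma^*$, the set of $(a_1,a_2,a_3)$ for which $H=\sum a_k\partial_{x_k}f$ fails to be smooth at $p$, or fails to meet one of the two local components of $(X,0)$ transversally at $p$, is a proper Zariski-closed subset of $\C^3$ (it is cut out by the vanishing of certain determinants built from the $2$-jets of $f_1,f_2$ and the $1$-jets of the $\partial_{x_k}f$). Since these conditions vary algebraically with $p$ and $\Sigma^*$ is irreducible away from the finitely many branches, the ``bad'' locus $B\subset\Sigma^*\times\C^3$ is a proper closed analytic subset, and its projection to $\C^3$ has nowhere-dense image on each component $\Sigma_j^*$; as $\Sigma^*$ is $1$-dimensional and the fibres of $B\to\C^3$ over a generic $(a_1,a_2,a_3)$ are therefore empty near $0$ (after shrinking the Milnor ball), a generic $(a_1,a_2,a_3)$ yields an $H$ transverse along all of $\Sigma^*\cap B_\epsilon^6$. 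Finally, one checks $H$ is not identically zero and $H^{-1}(0)$ is reduced near $\Sigma^*$, which is immediate once transversality holds.

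The main obstacle is the very first, purely local step: verifying that the three partials $\partial_{x_1}f,\partial_{x_2}f,\partial_{x_3}f$ already ``see'' the conormal directions of both branches $\{f_1=0\},\{f_2=0\}$ at a generic $p\in\Sigma^*$, i.e.\ that there is \emph{some} linear combination transverse to a given branch and that the combinations failing this form a proper subvariety. The clean way to do this is to exploit the remark in \ref{ss:local} that at $p_0$ the functions $f_1,f_2$ together with a local parametrization of $\Sigma^*$ form a local coordinate system: in those coordinates $f=f_1f_2$ up to a unit, $\partial_{x_k}f$ becomes (a unit times) $u_{1k}f_2+u_{2k}f_1+(\text{higher order})$ where $u_n(p)$ are exactly the vectors $[u_n(p)]\in\C\P^2$ introduced there, and transversality of $H^{-1}(0)$ to $\{f_1=0\}$ at $p$ amounts to $\sum_k a_k u_{2k}(p)\neq 0$, to $\{f_2=0\}$ to $\sum_k a_k u_{1k}(p)\neq 0$. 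Since $u_1(p)$ and $u_2(p)$ are nonzero (indeed linearly independent, as the two branches are distinct and transverse), each of these is a nontrivial linear condition on $(a_1,a_2,a_3)$, so avoiding the union of the two hyperplanes $\{a\cdot u_1(p)=0\}\cup\{a\cdot u_2(p)=0\}$ over all $p\in\Sigma^*\cap B_\epsilon^6$ is possible for a generic (in fact, dense open) set of $a\in\C^3$. This both proves existence and makes precise in what sense $H$ is generic.
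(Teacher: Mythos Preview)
Your local computation is correct and matches the paper's: transversality of $H^{-1}(0)$ to the two branches at $p\in\Sigma^*$ is exactly the pair of conditions $\sum_k a_k u_{1k}(p)\neq 0$ and $\sum_k a_k u_{2k}(p)\neq 0$, and $\Sigma\subset H^{-1}(0)$ is automatic. The gap is in your passage from the pointwise conditions to a global choice of $a$. You argue that the bad locus $B\subset\Sigma^*\times\C^3$ is a proper analytic subset and conclude that generic fibres of $B\to\C^3$ are ``empty near $0$ (after shrinking the Milnor ball)''. But $B$ has dimension $3$, so its projection to $\C^3$ can be (and typically is) surjective: a one--parameter family of hyperplanes in $\C^3$ generically sweeps out all of $\C^3$. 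What you actually need is that for generic $a$ the zero set $\{\tau:\ a\cdot u_n(p(\tau))=0\}$ does not \emph{accumulate at} $\tau=0$. Nothing you have written rules out an essential singularity of $\tau\mapsto [u_n(p(\tau))]$ at $\tau=0$, and in that scenario the zeros of $a\cdot u_n(p(\tau))$ would accumulate at $0$ for \emph{every} choice of $a$, so no shrinking of $\epsilon$ would help.

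The paper closes exactly this gap: it invokes the Nash transform of $X$ to guarantee that the limits $[\ell_n]:=\lim_{p\to 0}[u_n(p)]\in\C\mathbb P^2$ exist (these are points of $\gamma^{-1}(0)$ in the strict transform of $\Sigma_j$). One then picks $[a_1:a_2:a_3]$ off the finitely many hyperplanes dual to the $[\ell_n]$; by continuity $\sum_k a_k u_{nk}(p)\neq 0$ on a sufficiently small punctured representative of each $\Sigma_j$. If you prefer to avoid the Nash transform, an equivalent route is to note that $\mathrm{Hess}(f)(p(\tau))$ is holomorphic in $\tau$ (including at $0$) and for $\tau\neq 0$ equals $2\,u_1(p(\tau))\otimes_{\mathrm{sym}}u_2(p(\tau))$; properness of the factoring map $\C\mathbb P^2\times\C\mathbb P^2\to\{\text{rank}\leq 2\ \text{quadratic forms}\}$ then forces $[u_n(p(\tau))]$ to extend across $\tau=0$. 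Either way, the existence of the limit is the missing ingredient.
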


\begin{proof} For such $H$ one has
%\begin{equation}\labelpar{eq:kifh}
$  T_1(H)  = \sum_{k=1}^3 a_k u_{1k}\cdot T_1(f_2)+
  \sum_{k=1}^3 a_k u_{2k}\cdot T_1(f_1)$.
%\end{equation}
In particular, we have to show that for certain coefficients $\{a_k\}_k$
the expressions $ \beta_n(p)=\sum_{k=1}^3 a_k u_{nk} ( p )$ (for $n\in\{1,2\}$)
 have no zeros for $p\in \Sigma^*$.
%Consider the holomorphic map
%\begin{equation*}
%  \tilde{u}_n: \Sigma_j^* \to \C \mathbb {P} ^2\,, \ \
%  \tilde{u}_n (p) = [u_{n}(p)].
%\end{equation*}

If $\gamma : X^N\to X$ is the Nash transform of $X$, then $\gamma^{-1}(0)$
is the set of limits of tangent spaces of $X\setminus \{0\}$, it is an
algebraic set of  $\C \mathbb {P} ^2$ of dimension $\leq 1$
(for details see e.g. \cite{LeTeissier} and references therein).
The existence of Nash transform guarantees that
$ [\ell_n]=\lim_{p\to 0}[u_n(p)] \in \C\mathbb{P}^2$  exist.
Indeed, the set $\{[l_1],[l_2]\}$ is the intersection of
$\gamma^{-1}(0)$ with the strict transform of $\Sigma_j$.
Then
let $[a_1:a_2:a_3]$ be generic such that
$\sum_ka_k\ell_{nk}\not=0$ for $n\in\{1,2\}$.
With this choice  $\sum_{k=1}^3 a_k u_{nk}(p) \not=0$ for $p\not=0$ and
in a small representative of $\Sigma_j$.
\end{proof}

Fix  again
$j=\{i, \sigma(i)\}\in J $, and let  $ p: (\C, 0) \to (\Sigma_j,0)\subset (\C^3, 0) $,
$\tau\mapsto p(\tau)$,  be a
parametrization (normalization) of  $ \Sigma_j $.
  For any point  $p_0=p(\tau_0)$  and neighborhood  $\Sigma^*_j\cap U_0 \ni p(\tau)$ the
discussion from the second paragraph of \ref{ss:local}   can be repeated, in particular
we have the holomorphic
 vectors $u_n(p(\tau))$ ($n\in\{1,2\}$) (with the choice ambiguities described there).
Additionally, choose some $H$ as in Definition \ref{def:H}.
The assumption regarding $H$   guarantees that
$T_1(H)(p(\tau))=\beta_1(\tau)T_1(f_1)(p(\tau))+\beta_2(\tau)T_1(f_2)(p(\tau)) $
for some holomorphic functions $\beta_1$ and $\beta_2$ on $p^{-1}(\Sigma_j^*\cap U_0)$. If we replace
$(u_1,u_2)$ by $(\iota u_1,\iota^{-1}u_2)$ then $(\beta_1,\beta_2)$ will be replaced by
$(\iota^{-1}\beta_1,\iota\beta_2)$,
hence the product $\beta_1\beta_2$ is independent of all the $\iota$ and permutation ambiguities. It is a holomorphic function
on $p^{-1}(\Sigma_j^*\cap U_0)$ depending only on the equations $f$ and $H$. This uniqueness also guarantees
that taking different points of $\Sigma^*_j$ and repeating the construction, the output glues to a unique
holomorphic function $\mathfrak{b}_j(\tau)$ on a small punctures disc of $(\C,0)$.
Usually $\mathfrak{b}_j(\tau)$ has no analytic extension to the origin, however
one has the following.
\begin{lemma}\label{lem:Laurent} $\mathfrak{b}_j(\tau)$  is
a  Laurent series on $(\C,0)$.
\end{lemma}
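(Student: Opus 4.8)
The plan is to show that $\mathfrak{b}_j(\tau)$ extends meromorphically to the origin, i.e. has at most a pole at $\tau=0$, which together with its holomorphy on the punctured disc gives that it is a Laurent series. First I would reduce to a purely local question near $\tau=0$. Recall that $\mathfrak{b}_j$ is built from the product $\beta_1\beta_2$ where $T_1(H)(p(\tau))=\beta_1(\tau)T_1(f_1)(p(\tau))+\beta_2(\tau)T_1(f_2)(p(\tau))$; so on the punctured disc $\mathfrak{b}_j$ is a quotient of holomorphic expressions coming from the linear terms of $f_1,f_2,H$ along the normalization of $\Sigma_j$. Concretely, pick coordinates (after a linear change in $\C^3$) so that $u_1(p(\tau))$ and $u_2(p(\tau))$ are the first two rows of the $3\times 3$ matrix recording $T_1(f_1),T_1(f_2)$, and similarly for $T_1(H)$; then $\beta_n(\tau)$ is obtained by Cramer's rule as a ratio of $2\times 2$ minors (or, if all three vectors are involved, $3\times3$ minors) of entries of the form $u_{nk}(p(\tau))$ and $a_k$. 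Each such entry is the restriction to the analytic arc $\tau\mapsto p(\tau)$ of a holomorphic function on $\C^3$, hence is itself holomorphic in $\tau$ on the whole disc including $0$ (the Taylor coefficients $u_{nk}$ of $f_n$ are holomorphic where $f_n$ is defined, and $p(\tau)$ is a convergent power series). Therefore both numerator and denominator of $\beta_1\beta_2$ are honest power series in $\tau$ convergent at the origin.

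Next I would handle the $\iota$- and local-splitting ambiguity. The functions $f_1,f_2$ individually are only defined locally near a point $p_0\in\Sigma_j^*$ and are ambiguous up to $(f_1,f_2)\mapsto(\iota f_1,\iota^{-1} f_1)$ for an invertible holomorphic $\iota$; under this $(\beta_1,\beta_2)\mapsto(\iota^{-1}\beta_1,\iota\beta_2)$, so $\beta_1\beta_2$ is intrinsic on $\Sigma_j^*\cap U_0$, as already observed in the text. The point is that these local products glue to the single function $\mathfrak{b}_j$ on the punctured disc. Now using the Lemma that $H^1(\Sigma_j^*,\calO^*_{\Sigma_j^*})=0$ (respectively its analogue on $D_i$ when $i=\sigma(i)$), I can fix a \emph{global} splitting $f=\widetilde f_1\cdot\widetilde f_2$ along $\Sigma_j^*$, as constructed in subsection \ref{ss:local}. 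With respect to this global splitting, $\beta_1$ and $\beta_2$ become single-valued holomorphic functions on the punctured disc $p^{-1}(\Sigma_j^*)$, and each of them is locally of the Cramer's-rule form above, hence a ratio of power series in $\tau$. So $\beta_1,\beta_2$, and thus $\mathfrak{b}_j=\beta_1\beta_2$, are meromorphic on the punctured disc with possible pole only where a denominator minor vanishes.

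It remains to control the behaviour at $\tau=0$: one must rule out an essential singularity. Since $\mathfrak{b}_j$ is a ratio of two convergent power series $P(\tau)/Q(\tau)$ (numerator a minor built from $\{a_k u_{nk}(p(\tau))\}$, denominator a minor built from $\{u_{nk}(p(\tau))\}$) and $Q\not\equiv 0$ — this last nonvanishing is exactly the content of Proposition \ref{pr:H}, which guarantees $\beta_n(\tau)=\sum_k a_k u_{nk}(p(\tau))$ has no zeros on $\Sigma_j^*$, in particular $Q$ is not identically zero near $0$ — the function $P/Q$ is meromorphic at $0$: $Q(\tau)=\tau^m\cdot(\text{unit})$ for some $m\ge 0$, so $\mathfrak{b}_j(\tau)=\tau^{-m}\cdot(\text{holomorphic})$, which is a Laurent series. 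The main obstacle is organizing the bookkeeping of the two ambiguities (the $\iota$-torsor and, when $i=\sigma(i)$, the $\Z_2$-permutation) cleanly enough that the Cramer's-rule expression for $\mathfrak{b}_j$ is manifestly a ratio of convergent power series in the single variable $\tau$; once that algebraic identity is in place, finiteness of the order of vanishing of the denominator — guaranteed by Proposition \ref{pr:H} — closes the argument immediately.
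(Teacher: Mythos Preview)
Your approach is quite different from the paper's: the paper gives no direct analytic argument at all, but simply observes that the finiteness of the pole order is forced \emph{a posteriori} by the homological identities proved later (Theorem~\ref{th:mainth} and Corollary~\ref{cor:identities} together with (\ref{eq:alpha})), since $\mathfrak{v}_j$ appears there as an integer entry of a gluing matrix. So your strategy --- express $\mathfrak{b}_j$ as a ratio of convergent power series in $\tau$ --- is genuinely different and in principle more self-contained.

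However, there is a real gap. You assert that the entries $u_{nk}(p(\tau))$ are restrictions of holomorphic functions on $\C^3$ and hence holomorphic at $\tau=0$. This is not so: the splitting $f=f_1f_2$ exists only in a tubular neighbourhood of $\Sigma_j^*$, and the \v{C}ech/$H^1$ argument from \S\ref{ss:local} globalises it along $\Sigma_j^*$ but does \emph{not} extend it across the origin. Consequently the individual $\beta_n$ need not be meromorphic at $\tau=0$. For instance, for the Whitney umbrella $f=x^2y-z^2$ with $p(\tau)=(0,\tau,0)$ one has $u_1=(\sqrt{\tau},0,-1)$, $u_2=(\sqrt{\tau},0,1)$; taking the transversal section $H=x$ gives $\beta_1=\beta_2=1/(2\sqrt{\tau})$, which are not meromorphic, although $\mathfrak{b}_j=\beta_1\beta_2=1/(4\tau)$ is. So your Cramer's-rule expressions for $\beta_1,\beta_2$ separately are not ratios of power series in $\tau$.

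Your idea can be rescued by working with the \emph{product} $\beta_1\beta_2$ directly and using that the symmetrised quantities $u_{1k}u_{2l}+u_{1l}u_{2k}=(\partial_k\partial_l f)(p(\tau))$ \emph{are} restrictions of globally holomorphic functions on $\C^3$. If one writes the Cramer's-rule formula for $\beta_1\beta_2$ using a nonvanishing $2\times2$ minor, both numerator and denominator become polynomial expressions in $h_k=(\partial_k H)(p(\tau))$ and $(\partial_k\partial_l f)(p(\tau))$ (the antisymmetric part $(u_{1k}u_{2l}-u_{1l}u_{2k})$ only enters squared in the denominator, and $(u_{1k}u_{2l}-u_{1l}u_{2k})^2=(\partial_k\partial_l f)^2-\partial_k^2 f\cdot \partial_l^2 f$). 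These are genuine power series in $\tau$, and the denominator is not identically zero since $u_1,u_2$ are linearly independent along $\Sigma_j^*$. That closes the argument.
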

\begin{proof}
The finiteness of the poles follows e.g. from the homological identities from
Theorem \ref{th:mainth}, or from Corollary \ref{cor:identities}  combined with (\ref{eq:alpha}).
\end{proof}

We wish to emphasize that $ \mathfrak{b}_j(\tau) $ and its pole order usually depends on the choice of
$H$.

\begin{defn}\label{de:vert} Let $a \tau^{\mathfrak{v}_j} $ (for some $a\in\C^*$) be the non--zero
monomial with smallest power of
$\tau $ in the Laurent  series of $ \mathfrak{b}_j(\tau) $.
% (if such power does not exists, then we say that
%$\mathfrak{v}_j=-\infty $, however in ??????????????? we will prove that
%$\mathfrak{v}_j\not=-\infty $).
The integer $ \mathfrak{v}_j $ is called the \emph{vertical index
of $ f$ along $ \Sigma_j $ with respect to $ H$} (or, the $H$--vertical index).
\end{defn}

\subsection{Computation of the gluing functions $ \phi_j $}

We fix a transversal section  $H$ (cf. \ref{def:H}).
%which intersects $X\setminus \Sigma$ transversally (cf. \ref{pr:H}).
Then the divisor  $ \Phi^{*} (H) $ is $ H\circ \Phi=d\cdot d_\sharp $ for some (not necessarily reduced)
germ $d_\sharp:(\C^2,0)\to (\C,0)$ (such that $d$ and $d_\sharp $ have no common components).
Let
%$d'=\prod_n (d'_n)^{\alpha_n}$ be the irreducible decomposition of  $d'$,
$(D_\sharp,0)$ be the (non--reduced) divisor associated with $d_\sharp$, and
$N(L_\sharp)$ be a small tubular neighborhood of the reduced link
$L_\sharp:={\rm red}(D_\sharp)\cap \mathfrak{S}^3$ in $\mathfrak{S}^3$.

Let the Milnor fibre of $ \Phi^{*} (H) $ (in $\mathfrak{S}^3$)  be
\[ F_{\Phi^{*} (H)} = \{ (s, t) \in \partial \mathfrak{B}_\epsilon=\mathfrak{S}^3
 \ | \  H ( \Phi  (s, t) )>0 \}. \]
Let $ \Lambda_i $ and $ \Lambda_\sharp $
denote  the components of the oriented intersection
$F_{\Phi^{*} (H)}\cap\, \partial N_i$ and $F_{\Phi^{*} (H)}\cap \partial N(L_\sharp)$
with the tubular neighborhood boundaries
  of  $ L_i $ and $L_\sharp$ respectively.
 ($\Lambda_\sharp$ might have several components, in this notation we collect all of them.)

 Furthermore,
let $ M_i \subset \partial N_i $ be an oriented
 meridian of $ \partial N_i $ such that $ \lk (M_i, L_i) = 1 $
  %and $M_i$ bounds a disc in $ N_i$.
and fix  also (the oriented Seifert framing of $D_i $)
 $ L'_i \subset \partial N_i $ with  $ \lk (L'_i, L_i) =0 $.
 (Here the linking numbers are considered in oriented 3--sphere $\mathfrak{S}^3$.)

\begin{defn} We call $ L'_i $ the \emph{topological longitude}
 of the torus $ \partial N_i $, while
$ \Lambda_i $ the \emph{sectional longitude} of $\partial N_i$
 associated with the transversal section  $H$.
\end{defn}
Clearly we have the following facts (where $ [\cdot ] $ denotes the corresponding homology class)
\begin{equation}
\begin{split}
(a) & \ \ H_1 (N_i,\Z) \cong \Z\langle [\Lambda_i]\rangle  \cong\Z\langle [L'_i] \rangle, \\
(b) & \ \  H_1 ( \partial N_i, \Z ) \cong \Z \langle [ \Lambda_i ] \rangle \oplus \Z \langle [ M_i ] \rangle \cong
 \Z \langle [ L'_i ] \rangle \oplus \Z \langle [ M_i ] \rangle.\\
\end{split}
\end{equation}
We want to express $  [ \Lambda_i ] $ in terms of $ [ M_i ] $ and $ [L'_i ] $.

\begin{lemma}\label{lem:bases}
 Define
 $\lambda_i = - \Sigma_{k \not=i } D_k \cdot D_i -D_\sharp \cdot D_i$,
 where $ C_1 \cdot C_2 $ denotes the intersection multiplicity of
 $ (C_1, 0) $ and $ (C_2, 0)  $ at $0\in \C^2$.
 Then $ [ \Lambda_i ] = [L'_i ] + \lambda_i \cdot [M_i ] $
  in $ H_1 ( \partial N_i, \Z )$.
\end{lemma}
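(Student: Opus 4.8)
The plan is to compute the homology class $[\Lambda_i]$ by intersecting the Milnor fibre $F_{\Phi^*(H)}$ of $\Phi^*(H)=d\cdot d_\sharp$ against a suitable Seifert surface of $L_i$ inside the solid torus $N_i$, or — equivalently but more cleanly — by using the standard correspondence between the longitude shift and intersection multiplicities of divisor components. Concretely, $[\Lambda_i]-[L'_i]$ is an integer multiple of $[M_i]$ because both $[\Lambda_i]$ and $[L'_i]$ are generators of $H_1(N_i,\Z)$ (fact (a)), and that integer is the intersection number in $\mathfrak{S}^3$ of $\Lambda_i$ with $L'_i$ (a parallel copy of $L_i$ with zero framing), i.e. $\lambda_i=\mathrm{lk}(\Lambda_i,L_i)$ computed against the $0$-framing. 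So the statement reduces to the computation $\mathrm{lk}(\Lambda_i,L_i)=-\sum_{k\neq i}D_k\cdot D_i - D_\sharp\cdot D_i$.

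First I would recall the classical picture for a plane curve germ: if $g=\prod g_a$ with reduced components, then near the link component $L_i$ of $D_i=\{d_i=0\}$, the Milnor fibre of $d\cdot d_\sharp$ runs in the angular direction $\arg(d\cdot d_\sharp)=0$, and the intersection of this fibre with $\partial N_i$ picks up a longitude whose framing relative to the Seifert framing $L'_i$ is precisely the total local intersection multiplicity of $D_i$ with the \emph{rest} of the divisor $\{d\cdot d_\sharp=0\}/\{d_i=0\}$, that is with $\sum_{k\neq i}D_k + D_\sharp$. The sign is governed by the orientation convention on $F_{\Phi^*(H)}=\{H\circ\Phi>0\}$: as one traverses $M_i$ positively, $\arg(d_i)$ increases by $2\pi$, while $\arg(\prod_{k\neq i}d_k\cdot d_\sharp)$ stays bounded; to keep $\arg(d\cdot d_\sharp)=0$ one must compensate, and this is where the minus sign and the multiplicities enter. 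I would make this precise by writing $d\cdot d_\sharp$ in local coordinates $(u,v)$ adapted to a Puiseux parametrization of $D_i$, so that $d_i=v\cdot(\text{unit})$ and $\prod_{k\neq i}d_k\cdot d_\sharp = u^{\lambda_i}\cdot(\text{unit})$ after a suitable local model, and then read off the longitude of the zero-angle fibre directly on the torus $\partial N_i=\{|u|=\epsilon_1,|v|=\epsilon_2\}$.

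The cleanest execution uses the well-known fact (e.g. from the A'Campo–Lê–Michel theory of links of plane curves, or directly from \eqref{eq:mult}-type computations) that for two germs $C_1,C_2$ with $C_1$ irreducible, the Milnor fibre boundary of $C_1C_2$ meets $\partial N(C_1)$ in a curve homologous to $(\text{Seifert longitude of }C_1) + (C_1\cdot C_2)\cdot(\text{meridian})$, up to a global sign fixed by orientations; applying this with $C_1=D_i$ and $C_2=\big(\prod_{k\neq i}D_k\big)\cdot D_\sharp$, and being careful that the orientation convention $\{H\circ\Phi>0\}$ (rather than $\{H\circ\Phi \text{ of argument } 0\}$ in the other direction) introduces the overall minus sign, yields exactly $\lambda_i=-\sum_{k\neq i}D_k\cdot D_i - D_\sharp\cdot D_i$. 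I would also double-check the bookkeeping that $D_\sharp$ enters with its (non-reduced) multiplicity and that only the \emph{reduced} curve contributes to $L_\sharp$ but the \emph{full divisor} contributes to the intersection number, consistent with $\Phi^*(H)=d\cdot d_\sharp$ as a divisor.

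\textbf{Main obstacle.} The substantive difficulty is pinning down the global sign and making sure the orientations of $F_{\Phi^*(H)}$, of $M_i$ (fixed by $\mathrm{lk}(M_i,L_i)=1$), of $L'_i$, and of the ambient $\mathfrak{S}^3$ all interact to give the stated minus sign rather than a plus; the magnitude $\sum_{k\neq i}D_k\cdot D_i + D_\sharp\cdot D_i$ is the routine part, essentially an instance of the intersection-multiplicity interpretation of linking numbers of iterated torus links, but the sign requires a careful local computation in adapted coordinates, which I would carry out on the explicit model $d\cdot d_\sharp = v\cdot u^{\lambda_i}\cdot(\text{unit})$ on the torus $|u|=|v|=\text{const}$ and track the angular winding of $\arg(v u^{\lambda_i})$ along the generators.
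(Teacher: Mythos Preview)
Your reduction to the identity $\lk(\Lambda_i,L_i)=\lambda_i$ is exactly right, and your proposed local/winding-number computation would eventually give the answer, but the paper takes a much shorter global route that sidesteps the sign headache you correctly flag as the main obstacle.

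The paper observes that the truncated Milnor fibre
$F'_{\Phi^*(H)}:=F_{\Phi^*(H)}\setminus \mathrm{int}\big(\bigcup_i N_i\cup N(L_\sharp)\big)$
is a Seifert surface in $\mathfrak{S}^3$ whose oriented boundary is $\sum_k \Lambda_k + \Lambda_\sharp$, hence
$0=\lk\big(\partial F'_{\Phi^*(H)},\,L_i\big)=\lk(\Lambda_i,L_i)+\sum_{k\ne i}\lk(\Lambda_k,L_i)+\lk(\Lambda_\sharp,L_i)$.
For $k\ne i$ the curve $\Lambda_k$ is isotopic to $L_k$ inside $N_k\subset \mathfrak{S}^3\setminus L_i$, so $\lk(\Lambda_k,L_i)=\lk(L_k,L_i)=D_k\cdot D_i$, and similarly for $\Lambda_\sharp$. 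Solving for $\lk(\Lambda_i,L_i)$ gives $\lambda_i$ with the minus sign appearing automatically from the bounding relation --- no local model or orientation chase needed.

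Your approach, by contrast, tries to read the framing directly on $\partial N_i$ via a local normal form. This works, but two things are worth cleaning up: first, near $L_i$ (i.e.\ away from the origin) the function $\prod_{k\ne i}d_k\cdot d_\sharp$ is a \emph{unit}, not $u^{m}\cdot(\text{unit})$ in any holomorphic sense --- what you are really computing is the winding number of that unit along $L_i$, which is $\sum_{k\ne i}D_k\cdot D_i+D_\sharp\cdot D_i$; second, your exponent ``$u^{\lambda_i}$'' has the wrong sign on its face since $\lambda_i<0$. Both are fixable, but they illustrate why the paper's Seifert-surface argument is preferable here: it converts the delicate local sign computation into a single homological identity in $\mathfrak{S}^3$.
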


\begin{proof} First note that
$ [\Lambda_i ] = [L'_i ] + \lk ( \Lambda_i, L_i ) \cdot [M_i ] $.
Write $F'_{\Phi^{*} (H)}:=F_{\Phi^{*} (H)}\setminus (\mbox{int}(\cup_i N_i\cup N(L_\sharp)))$.
Then
$0=  \lk ( \partial F'_{\Phi^{*} (H)} , L_i )=\sum_k \lk (\Lambda_k,L_i)+
\lk(\Lambda_\sharp,L_i)=\lk(\Lambda_i,L_i)+\sum_{k\not=i}D_k\cdot D_i+D_\sharp\cdot D_i= \lk(\Lambda_i,L_i)-\lambda_i$.
\end{proof}

\begin{thm}\label{th:mainth} For any $j=\{i,\sigma(i)\}\in J$, the gluing functions $ \phi_j $ from
Proposition~\ref{pr:gl}
is characterized up to homotopy by the following identities

{\bf Case 1:} \ $ i \neq \sigma (i) $. Identify the homology groups $  H_1 (S^1\times S^1 \times \{ 1 \}, \Z ) $,
$ H_1 (S^1\times S^ 1\times \{ -1 \}, \Z ) $ and $  H_1 (S^1\times S^1 , \Z ) $ via the natural homotopies
$$S^1\times S^1\times\{-1\}\stackrel{h}\sim S^1\times S^1\times [-1,1]\stackrel{h}\sim
S^1\times S^1\times\{1\}.$$ Then in this homology group one has
\[
 \phi_{j*} ([M_i]) = - \phi_{j*} ([M_{\sigma(i)}]) \ \  \mbox{ and }  \]
 \[ \phi_{j*} ([\Lambda_i])  = \phi_{j*} ([\Lambda_{\sigma(i)}] +  \mathfrak{v}_j
 \cdot \phi_{j*} [M_{\sigma(i)}]). \]
%where $ \phi_{i*} ([M_i]) $ and $ \phi_{i*} ([\Lambda_i]) $ are in ,
%$ \phi_{i*} ([M_{\sigma(i)}]) $ and $ \phi_{i*} ([\Lambda_{\sigma(i)}]) $ are in
%, and both homology groups are naturally isomorphic with .

{\bf  Case 2:} \ $ i = \sigma (i) $.
 \[ \phi_{j*} ([M_i] ) = -m \ \ \mbox{ and } \] %\phi_{j*} ([M_i]) = - \phi_{j*} ([M_{\sigma(i)}])
 \[ \phi_{j*} ([\Lambda_i] ) = c+ \mathfrak{v}_j \cdot m, \]
 where $m $ and $c $ are the two generators of $ H_1( \partial Y, \Z)$, see \ref{ss:homY} (especially
 (\ref{eq:pY}) and (\ref{eq:cmu})).
\end{thm}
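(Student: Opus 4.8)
The plan is to reduce the statement to a concrete homological bookkeeping on three tori and to pin down the off--diagonal entries using the transversal section $H$ as the common reference frame in both the source and the target of $\Phi$. First I would recall from the proof of Proposition~\ref{pr:gl} that $X_j = \partial F \cap N(\Upsilon_j)$ is the Milnor--fibre bundle of the transversal $A_1$--singularity over $\Upsilon_j \simeq S^1$, that its fibre is $F_j = S^1\times I$, and that the two pieces are glued along $\partial N(\Upsilon_j) \cap \partial F$. The key geometric idea is that $H$ restricted to $\Upsilon_j$ (or rather $\Phi^*H$ restricted to $L_i$) supplies a \emph{section} of this Milnor fibration: the oriented intersection $F_{\Phi^*(H)}\cap \partial N_i = \Lambda_i$ is exactly the boundary trace of such a section. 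Hence, under the gluing diffeomorphism $\phi_j$, the class $[\Lambda_i]$ must be sent to the class of a section of the Milnor fibration on the $X_j$--side, and $[M_i]$ (the meridian of $L_i$, which bounds a disc transverse to $\Upsilon_j$) must be sent to $\pm$(a meridian of $\Upsilon_j$). In Case~1 the target fibration is trivial ($X_j = S^1\times S^1\times I$), so a section is $S^1\times S^1\times\{*\}$ and a meridian is the $I$--direction--complementary fibre factor; in Case~2 it is the nontrivial $Y$, where by subsection~\ref{ss:homY} the meridian direction is $m$ and the ``section'' direction (the class whose image lies in the center of $\pi_1(Y)$, by Lemma~\ref{lem:UNIV}(b)) is $c$.

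With that dictionary in place, the two asserted equations in each case split into a ``meridian'' equation and a ``longitude'' equation. The meridian equations ($\phi_{j*}[M_i] = -\phi_{j*}[M_{\sigma(i)}]$ in Case~1, $\phi_{j*}[M_i] = -m$ in Case~2) follow from orientation considerations: the two preimages $N_i$ and $N_{\sigma(i)}$ of $N(\Upsilon_j)$ under $\Phi$ carry opposite induced orientations on their meridians relative to $\Upsilon_j$ (this is just the statement that $\Phi|_L$ is a double cover and the two sheets meet $\Sigma$ from the two branches), and the sign $-m$ in Case~2 records that $\Phi|_{L_i}\to\Upsilon_j$ is the connected double cover whose induced map on meridians is identified with the boundary behaviour of $Y$ as in description~(1) of subsection~\ref{ss:cY}. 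I would make these signs precise by orienting everything via $f$ (the Milnor fibre $F$ carries its complex orientation) and by tracing through the identification $K\setminus\cup_j N(\Upsilon_j)\simeq \mathfrak{S}^3\setminus\cup_i(N_i\cup N_{\sigma(i)})$ used in the proof of Proposition~\ref{pr:gl}.

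The heart of the argument — and the step I expect to be the main obstacle — is the appearance of the vertical index $\mathfrak{v}_j$ as the off--diagonal coefficient in the longitude equations. The point is a comparison of two longitudes of $\partial N_i$ living in two different worlds: the sectional longitude $\Lambda_i$ defined in the \emph{source} $\C^2$ via $\Phi^*H$, and the longitude $\Lambda_i^{\mathrm{target}}$ that the section of the Milnor fibration of $f$ along $\Sigma_j$ cut out by $H$ produces in the \emph{target} $\C^3$. Both are sections of the same circle bundle, so they differ by a multiple of the meridian, and that multiple is precisely the winding discrepancy between ``$H$ as a framing upstairs'' and ``$H$ as a framing downstairs''. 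I would compute this discrepancy by writing, near a point of $\Sigma_j^*$, the local coordinates $(\Sigma_j^*, f_1, f_2)$ and expressing the section of $\{f=\delta\}\cap \partial N(\Upsilon_j)$ singled out by $H>0$; parametrizing $\Sigma_j$ by $\tau\mapsto p(\tau)$ and tracking how $H\circ\Phi = d\cdot d_\sharp$ restricts to $L_i$, the relevant twisting is governed exactly by the leading exponent of $\mathfrak{b}_j(\tau) = \beta_1\beta_2$, i.e. by $\mathfrak{v}_j$ of Definition~\ref{de:vert}. Concretely, the ratio $T_1(H)/(T_1(f_1)T_1(f_2))$ along $\Sigma_j^*$ is, up to the $\iota$--ambiguity, $\mathfrak{b}_j(\tau)$, and its order $\mathfrak{v}_j$ measures how much the level set $\{H=0\}$ spirals relative to the product $\{f_1=0\}\cup\{f_2=0\}$ as $\tau$ goes around $0$; this is the extra $\mathfrak{v}_j\cdot[M]$ (resp. $\mathfrak{v}_j\cdot m$) term. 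The delicate parts will be: getting the sign of $\mathfrak{v}_j$ right (which orientation of $\tau$, which of $f_1,f_2$), and checking that the permutation ambiguity $i=\sigma(i)$ does not spoil the well--definedness — but the latter is already guaranteed because $\mathfrak{b}_j = \beta_1\beta_2$ is symmetric in the two branches, as noted right before Definition~\ref{de:vert}. Finally, Lemma~\ref{lem:bases} would be invoked only as a consistency check: it re-expresses $[\Lambda_i]$ in terms of the \emph{topological} longitude $[L_i']$ and confirms that the gluing matrix has the claimed shape \eqref{eq:glue} with the single relevant entry.
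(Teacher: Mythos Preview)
Your high--level strategy matches the paper's: work in a tubular neighbourhood of $\Upsilon_j$, trivialize the transversal $A_1$ Milnor fibration, and use the level set of $H$ as the reference longitude whose winding supplies $\mathfrak{v}_j$. Two points in your plan, however, are miscast and would lead you astray if you tried to execute them literally.

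First, the ``source versus target'' comparison is a red herring. The map $\Phi$ restricts to a diffeomorphism $\mathfrak{S}^3\setminus\cup_i N_i \to K\setminus\cup_j N(\Upsilon_j)$, so $\Lambda_i$ and what you call $\Lambda_i^{\mathrm{target}}$ are literally the same curve; there is no discrepancy between them. The coefficient $\mathfrak{v}_j$ does not come from comparing source and target, but from comparing the two \emph{branches} in the target. The paper makes this explicit by normalizing coordinates along $\Sigma_j^*$ so that $f=xy$ and $T_1H = x + \mathfrak{b}_j(\tau)y$ (and then, up to homotopy, $T_1H = x + \tau^{\mathfrak{v}_j}y$). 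On the boundary torus $\partial_i = \{|x|=\varepsilon,\,y=0\}$ the curve $\{H>0\}$ is the constant section $x=\varepsilon$, while on $\partial_{\sigma(i)} = \{x=0,\,|y|=\varepsilon\}$ it is $y=\varepsilon\tau^{-\mathfrak{v}_j}$, which winds $\mathfrak{v}_j$ times around the meridian. That asymmetry between the two branches --- not anything about $\Phi$ --- is what produces $[\Lambda_i]=[\Lambda_{\sigma(i)}]+\mathfrak{v}_j[M_{\sigma(i)}]$. In Case~2 the same computation runs, but the identification $(0,x,y)\sim(\pi,y,x)$ glues the two half--curves into the closed curve $c$ of $\partial Y$, and the explicit $(s,t,z)$--parametrization of $Y$ from subsection~\ref{ss:homY} is what pins down $\phi_{j*}[M_i]=-m$ and $\phi_{j*}[\Lambda_i]=c+\mathfrak{v}_j m$.

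Second, your formula ``$T_1(H)/(T_1(f_1)T_1(f_2))=\mathfrak{b}_j(\tau)$'' is incorrect: one has $T_1 H=\beta_1\,T_1 f_1+\beta_2\,T_1 f_2$ (a linear combination, not a product), and $\mathfrak{b}_j=\beta_1\beta_2$. The reason the \emph{product} $\beta_1\beta_2$ governs the twist is precisely the two--branch picture above: after absorbing $\beta_1$ into the $x$--coordinate the residual coefficient of $y$ is $\beta_1^{-1}\cdot\beta_1\beta_2\cdot\ (\mbox{unit})$, i.e.\ essentially $\mathfrak{b}_j$. So the mechanism you describe (``how much $\{H=0\}$ spirals relative to $\{f_1=0\}\cup\{f_2=0\}$'') is right, but the bookkeeping that isolates $\mathfrak{v}_j$ is the coordinate normalization $f=xy$, $T_1H=x+\tau^{\mathfrak{v}_j}y$ rather than a ratio. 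Once you make that correction your outline becomes the paper's proof.
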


%In other words, $ \partial N_i $ and $ \partial N_{ \sigma(i) } $ are glued together to form $ \partial F $ such
%that the meridian goes to meridian with opposite orientation and the algebraic longitude goes to algebraic longitude
%in orientation preserving way. In the second case, $ \partial N_i $ and $ \partial Y $ are glued together to form
%$ \partial F $ such that the meridian goes to $  \mu $ with opposite orientation and the algebraic longitude goes to
%$  \lambda^2 $ in orientation preserving way.

\begin{proof}
Recall that $\tau\mapsto p(\tau)$ is the normalization of $\Sigma_j$.
At any point $p(\tau_0)$
of $\Sigma^*_j$ one can consider $\tau$ as a local complex coordinate, which
can be completed with two other  local complex coordinates $(x,y)$ (local coordinates in a
transversal slice of $\Sigma_j$ at $p(\tau_0)$)   such that $(\tau,x,y)$
form a local coordinate system of $(\C^3,p(\tau))$, and  locally
$f(x,y)=xy $.
These two local coordinates correspond to a splitting $f=f_1\cdot f_2 $ of $f$.
According to the discussion from \ref{ss:local}, the splitting of $f$ can be done globally
along the whole $\Sigma_j^*$, hence these coordinates $(x,y)$ (corresponding to the
components $f_1$ and $f_2$) can also be chosen globally along $\Sigma_j^*$ (with the permutation ambiguity
whenever $i=\sigma(i)$, a fact which will be handled below).

 Furthermore, in these coordinates, $T_1 H=\beta_1(\tau) x+\beta_2(\tau) y$.
Since $\beta_1(\tau)$ have no zeros and poles in the small representative of $\Sigma_j^*$, $(\tau, \beta_1x, \beta_1^{-1}y)$ are also local coordinates, and in these coordinates the equations
 transform into   $f(x,y)=xy$ and $T_1 H(x,y)=x+\mathfrak{b}_j(\tau) y$.

If we concentrate on the points of  $\Upsilon_j=\Sigma_j\cap S^5_\epsilon$, and its neighbourhood in $S^5_\epsilon$, then similarly as above, we have the real
coordinate $\tau \in \Upsilon_j$, and the two complex (transversal) local
coordinates $(x,y)$, with equations  $f(x,y)=xy$ and $T_1 H(x,y)=x+\mathfrak{b}_j(\tau) y$ as before.

{\bf Case 1.}
The above  local description  globalises as follows
(compare also with the first part of the proof of Proposition \ref{pr:gl}).
 The space $\partial F\cap N(\Upsilon_j)$
has a product decomposition
$\Upsilon_j\times F_j=S^1\times S^1 \times I$, where $\Upsilon_j=S^1$
is the parameter space of $\tau$,
and $F_j$ is the local Milnor fibre $F_j=\{xy=\delta\}\cap B^4_\varepsilon$,
$0 <\delta\ll \varepsilon\ll \epsilon $,
diffeomorphic to $S^1\times I$.
In other words, $\partial F\cap N(\Upsilon_j)$ is the space
$\{(\tau,x,y)\in S^1\times B^4_\varepsilon\,:\, xy=\delta\}$,
$0 <\delta\ll \varepsilon\ll \epsilon$ (here we will use the same $\tau$ notation for the
parameter of $S^1$). Note that for $\delta\ll \varepsilon$,
the boundary of $F_j$ is `very close' to the two circles
$\{|x|=\varepsilon, \, y=0\}$ and $\{x=0, \, |y|=\varepsilon\}$ of $\partial B^4_\varepsilon$.
Using isotopy in the neighborhoods of these two circles,
$\partial F_j$ can be identified with these two circles
(similarly as we identify via Ehresmann's fibration theorem the boundary of the
Milnor fibre of an isolated singularity with the link),
and in order to
simplify the presentation, we will make this identification.
Hence, the boundary components of $\partial F\cap N(\Upsilon_j)$ can be identified (by isotopy in $ \partial N( \Upsilon_j ) $) with
 $\partial _i:=\{(\tau,x,y)\,:\,
\tau\in S^1,\, |x|=\varepsilon, \, y=0\}$ and $\partial_{\sigma(i)}:=
\{(\tau,x,y)\,:\, \tau\in S^1,\, x=0, \, |y|=\varepsilon\}$.
(The choice of indices $i$ and $\sigma(i)$ is arbitrary and symmetric.)
These two tori are identified homologically  since they are boundary
components of $S^1\times F_j$.
In $\partial_i$ we have the meridian
$\tilde{M}_i=\{(\tau,x,y)\,:\, \tau=1, \,|x|=\varepsilon , \, y=0\}$, while in
$\partial_{\sigma(i)}$ we have the meridian
$\tilde{M}_{\sigma(i)}=\{(\tau,x,y)\,:\, \tau=1, \,x=0, \, |y|=\varepsilon\}$
(both naturally oriented as the complex unit circle).
Since $\partial (\{\tau=1\}\times F_j)
=\tilde{M}_i\cup -\tilde{M}_{\sigma(i)}$, the first wished
identity follows.

Now, we would like to study the intersection curve of $ \partial_i $ and the Milnor fibre of $ H $ associated with a positive argument, that is
$ \tilde{\Lambda}_i :=  \partial_i \cap \{ H > 0 \} $. This curve is homotopic with $ \partial_i \cap \{ T_1 H=x+\mathfrak{b}_j(\tau) y > 0 \} $ and also with $ \partial_i \cap \{ x+\tau^{\mathfrak{v}_j}y > 0 \}$ in $ \partial_i $, cf. the definition \ref{de:vert}. Similarly, $ \tilde{\Lambda}_{\sigma (i)} :=  \partial_{\sigma (i)} \cap \{ H > 0 \} $ is homotopic with  $ \partial_{\sigma (i)} \cap \{ x+\tau^{\mathfrak{v}_j}y > 0 \}$ in $ \partial_{\sigma (i)} $.

%\marginpar{Szerintem $ \mathfrak{b} $ rossz jeloles, alig ter el
 %$ \mathfrak{v} $ tol. Talan $ \mathfrak{B} $?}

%$H(\tau,x,y)=x+\tau^{\mathfrak{v}_j}y$.
%Then $H$
%cuts (via its Milnor fibre associated with a positive argument) in
%\marginpar{jeloljuk ezeket $\tilde{M}$ es $\tilde{\Lambda}$-kel??? OK!!!}
%$\partial _i$ the circle
Thus $\tilde{\Lambda}_i $ is homotopic with $
\{(\tau ,x,y)\,:\, \tau\in S^1, \, |x|=\varepsilon,\, y=0\}\cap \{ x+\tau^{\mathfrak{v}_j}y>0\}=
\{(\tau ,x,y)\,:\, \tau\in S^1,\, x=\varepsilon, \, y=0\}$ and
 $\tilde{\Lambda}_{\sigma(i)} $ with $
\{(\tau ,x,y)\,:\, \tau\in S^1,\, x=0, \, y=\varepsilon \tau^{-\mathfrak{v}_j}\}$.
%\marginpar{kicsreltuk $b_j$-t $v_j$-re}
Hence homologically $\tilde{\Lambda}_{\sigma(i)}+\mathfrak{v}_j \tilde{M}_{\sigma(i)}$
is represented by  the circle
$\{(\tau ,x,y)\,:\, \tau\in S^1,\, x=0, \, y=\varepsilon\}$, which is
homologous in $S^1\times F_j$ with $\tilde{\Lambda}_i$. This is the second identity.
Obviously, these identities can be transferred from  the boundary
$\partial _i\cup \partial _{\sigma(i)} $ of $S^1\times F_j$
into similar identities in $\partial N_i\cup \partial N_{\sigma(i)}$, via the diagramm, where all the maps are orientation preserving
 diffeomorphisms (cf. the proof of \ref{pr:gl}):
\begin{equation}\labelpar{eq:dia}
 \begin{array}{cccc}
  \Phi: & \mathfrak{S}^3 \setminus \cup_i N_i & \to  & K \setminus \cup_j N( \Upsilon_j ) \\
          & \partial N_i \sqcup \partial N_{ \sigma(i) } & \to & \partial_i \sqcup \partial_{ \sigma(i) } \\
          & \Lambda_i    , \Lambda_{ \sigma(i) }                                & \to & \tilde{ \Lambda}_i,  \tilde{\Lambda}_{ \sigma(i) } \\
          & M_i    , M_{ \sigma(i) }                                & \to & \tilde{ M}_i,  \tilde{M}_{ \sigma(i)}. \\
 \end{array}
\end{equation}

\vspace{1mm}

\noindent {\bf Case 2.}  We use similar notations and conventions as in Case 1.
%\marginpar{VERIFY}
Let us parametrize $\Upsilon_j$ as $\tau=e^{2is}$,
$s\in[0,\pi]$. Then $ \partial F\cap N(\Upsilon_j)$ is
$([0,\pi]\times F_j)/\sim $,
%\marginpar{le kellene irni ezt az azonositast pontosabban}
where by $\sim $ we identify $(0,x,y)\sim (\pi,y,x)$ for all $(x,y)\in F_j$.
Let us parametrize $ F_j $ as $ x= \sqrt{ \delta} r e^{it} $ and $ y = \sqrt{ \delta} r^{-1} e^{-it} $, where $ t \in [- \pi , \pi] $ and
$ r \in [r_0 , r_1] $ such that $ r_0 r_1=1$ and $\delta (r_0^2 + r_1^2) = \varepsilon^2 $. Denote $ z = \log_{r_1} r $.
% $ r \in [\sqrt{\delta}/ \epsilon , \epsilon / \sqrt{\delta}] $. Denote $ z = \log_{\epsilon / \sqrt{\delta}} r $.
Then we can parametrize $ \partial F\cap N(\Upsilon_j)$ by $ (s, t, z) $, thus
$  \partial F\cap N(\Upsilon_j) $ is just $([0,\pi]\times [- \pi, \pi ] \times [-1, 1])/\sim $,
where by $\sim $ we identify
$  (0, t, z) \sim ( \pi, -t, -z) \mbox{ and } (s, -\pi, z) \sim (s, \pi, z) $.
 We regard this as parametrization of
 $  \partial F\cap N(\Upsilon_j) $ by $ Y $, cf.  \ref{ss:homY}.

%\marginpar{$z$ helyett vmi mas? nem zavaro itt a $z$?
%am ha csereljuk, akkor az $Y$ leirasaban is cserelni kell}

Set
\[ \tilde{M}_{j1} = \{ ( \tau = 1,\ x= \sqrt{ \delta} r_1 e^{it},\ y = \sqrt{ \delta} r_0 e^{-it}  ) \} \mbox{ and }\]
\[ \tilde{M}_{j2}  = \{ ( \tau =1,\ x= \sqrt{ \delta} r_0 e^{-it},\ y = \sqrt{ \delta} r_1 e^{it}  ) \}. \]
The  are the two oriented meridians of $ \partial F\cap N(\Upsilon_j)$,
parametrized by $ t \in [ - \pi, \pi ] $. In terms of  $ (s, t, z) $ they are
 \[ \tilde{M}_{j1}=\{(s=0,\, t , \, z=1)\} = \{(s=\pi,-t, \, z=-1)\} \subset \partial Y \mbox{ and } \]
\[ \tilde{M}_{j2}=\{(s=0,\, -t , \, z=-1)\} = \{(s=\pi,t, \, z=1)\} \subset \partial Y \mbox{,} \]
thus with the notations of \ref{ss:homY}, $\tilde{M}_{j1} = -m'$ and $ \tilde{M}_{j2}= -m $.
%\marginpar{Irjunk at vmit? Pl. a kockan $m$ es $m'$ ha masik helyre
%lenne barajzolva mar jo lenne, nem minuszba lennenek.}

Similarly as in Case 1, by an isotopy in $ \partial N( \Upsilon_j ) $ the boundary $ \partial_j $ of $ \partial F\cap N(\Upsilon_j)$ can be identifyed with $ \partial_{j1} \cup \partial_{j2} $, where
\[\partial _{j1}:=\{(s,x,y)\,:\,
s \in [0, \pi],\, |x|=\varepsilon, \, y=0\} \mbox{ and }\]
\[ \partial_{j2}:=
\{(s,x,y)\,:\, s \in [0, \pi],\, x=0, \, |y|=\varepsilon\} \mbox{.} \]
The two parts of $ \partial_j $ are glued together along the image of the oriented meridians
\[ \tilde{M}_{j1}=\{(s=0,\, x= \varepsilon e^{it}, \, y=0)\} = \{(s=\pi,\, x=0, \, y=\varepsilon e^{it})\}  \mbox{ and} \]
\[ \tilde{M}_{j2} = \{(s=0,\, x=0,\, y= \varepsilon e^{it})\} = \{(s= \pi,\, x= \varepsilon e^{it}, \, y=0)\} \mbox{.} \]
% The isotopy of $ \partial_j \subset S^5 $ moves the point
% ( \tau,\ x= \sqrt{ \delta} r_1 e^{it},\ y = \sqrt{ \delta} r_0 e^{-it}  )$ to
% $ ( \tau , \ x= \varepsilon e^{it}, \ y=0) $ and
% $( \tau,\ x= \sqrt{ \delta} r_0 e^{it},\ y = \sqrt{ \delta} r_1 e^{-it}  )$ to
%$ ( \tau , \ x= 0, \ y= \varepsilon e^{-it}) $.
% Taking the pre-image of the meridians, one can express them with the $ (s, t, z) $-parametrization of $ Y $, and get the parametrizations by $ t \in [0, \pi] $ as
% $ \tilde{M}_{j1}=\{(s=0,\, t , \, z=1)\} = \{(s=\pi,-t, \, z=-1)\} \subset \partial Y$ and
% $ \tilde{M}_{j2}=\{(s=0,\, t , \, z=-1)\} = \{(s=\pi,-t, \, z=1)\} \subset \partial Y$.
(See also the description in \ref{ss:homY}.)
Since the oriented boundary $ \partial ( \{ \tau = 1 \} \times F_j ) $ is $ \tilde{M}_{j1} \sqcup \tilde{M}_{j2} $, $ \tilde{M}_{j1}$ is homologous with $ - \tilde{M}_{j2} $ in $ F_j \subset Y$. On the other hand $ \tilde{M}_{j1} $ is homologous with $ \tilde{M}_{j2} $ in $ \partial Y \subset Y $, thus $ [\tilde{M}_{j1}]= [\tilde{M}_{j2}]$ is an order--$2$ element in $H_1 ( Y, \Z ) $.

Consider the closed curve $C$ obtained as union of
$C_{1}$ and $C_{2}$, where
\[C_{1}=\{(s,x=\varepsilon, y=0)\,:\, s\in [0,\pi]\} = \{(s,t=0, z=1)\,:\, s\in [0,\pi]\}  \mbox{, and} \]
  \[ C_{2}=\{(s,x=0, y= \varepsilon)\,:\, s\in [0,\pi]\} = \{(s, t=0, z=-1) \,:\, s\in [0,\pi]\}\mbox{.} \]
  Note that
$C_{1}$ connects the points $A_1=(s=0, x=\varepsilon,y=0)=(s=0, t=0, z=1)$ with
$B_1=(s= \pi, x=\varepsilon, y=0)= (s=\pi, t=0, z=1)$,
while
$C_{2}$ connects the points $A_2=(s=0,x=0,y=\varepsilon)= (s=0, t=0, z=-1)$ with
$B_2=(s= \pi, x= 0,y=\varepsilon)= (s= \pi, t= 0, z=-1)$. Since $A_1\sim B_2$ and $A_2\sim B_1$,
they form a closed curve. Note that $ [C]=[c] $ in $ H_1 ( \partial Y, \Z ) $, see \ref{ss:homY}.

Similarly as in Case 1, the function $H$ can be replaced by
$x+\tau^{\mathfrak{v}_j}y$, hence its level set associated with a positive value determines the curve
$ \tilde{\Lambda}_j = \{ x+\tau^{\mathfrak{v}_j}y >0 \} \cap \partial_j $. This consists of two parts, $\tilde{\Lambda}_{j1}$ and $\tilde{\Lambda}_{j2}$, where
\[ \tilde{\Lambda}_{j1}=
\{(s,\, x=\varepsilon,\, y=0)\,:\, s\in [0,\pi]\}
= \{(s,t=0, z=1)\,:\, s\in [0,\pi]\} \subset \partial_{j1},  \]
while
$ \tilde{\Lambda}_{j2}=
\{(s,\, x=0,\, y=\varepsilon e^{-2is\mathfrak{v}_j})\, :\, s\in[0,\pi]\} $ equals
 \[ \{(s,t=2 \mathfrak{v}_j s \ (\mbox{mod } [- \pi, \pi ]), z=-1)\,:\, s\in [0,\pi]\} \subset \partial_{j2}. \]
$ \tilde{\Lambda}_{jn}$ has the same end--points as $C_{n}$, hence
%\marginpar{nekem az elojelek csak nem vilagosak, nekem meg a 2-es sem!
%ezt itt nem ertem}
$\tilde{\Lambda}_{j1}$ and $\tilde{\Lambda}_{j2}$ form together a closed curve, as we expect.
Furthermore, $\tilde{\Lambda}_j+\mathfrak{v}_j \tilde{M}_{j2}$ is homologous in $\partial Y$ with $C$.

The source and the target are connected by the restriction of $ \Phi $, which gives the orientation preserving  diffeomorphisms:
\begin{equation}\labelpar{eq:dia2}
 \begin{array}{cccc}
  \Phi: & \mathfrak{S}^3 \setminus \cup_i N_i & \to  & K \setminus \cup_j N( \Upsilon_j ) \\
          & \partial N_i  & \to & \partial_j  \\
          & \Lambda_i      & \to & \tilde{ \Lambda}_j \\
          & M_i    & \to & \tilde{ M}_{j1} \mbox{ homologous with } \tilde{ M}_{j2} \\
 \end{array}
\end{equation}

Since $C$ identifies with $c$ and $ \tilde{M}_{j1}$ and $ \tilde{M}_{j2}$ with $-m$,
$\tilde{\Lambda}_i=c+\mathfrak{v}_j\cdot m$ follows.
\end{proof}

\subsection{The $H$--independent description of the gluing. The `vertical index'}\label{ss:topverindex}\

Recall that the sectional longitudes $ \Lambda_i $  and the corresponding
$H$--vertical indexes  $ \mathfrak{v}_j$ depend on the choice of $H$.
 The goal of this paragraph is to replace $(M_i, \Lambda_i)$
 by the $H$--independent $(M_i, L_i')$ and  $ \mathfrak{v}_j$
 by an $H$--independent number.

\begin{defn}
 For any $ j= \{ i, \sigma (i) \} $ define $\mathfrak{vi}_j$ by

\begin{equation}\labelpar{eq:alphauj}
  \mathfrak{vi}_j := \left\{ \begin{array}{ccc}
                       \lambda_i  + \lambda_{ \sigma (i)} +  \mathfrak{v}_j  & \mbox{ if } & i \neq \sigma (i) \\
                     \lambda_i  + \mathfrak{v}_j & \mbox{ if } & i = \sigma (i) \\
                     \end{array} \right.
                     \mbox{.}
 \end{equation}
We call $ \mathfrak{vi}_j $ the \emph{vertical index} of $ \Sigma_j $.
\end{defn}

The next statement follows
from \ref{lem:bases} and \ref{th:mainth}  (see also  \ref{ss:gluinguj}).

\begin{cor}
For $ i \neq \sigma (i) $
\[
 \phi_{j*} ([M_i]) = - \phi_{j*} ([M_{\sigma(i)}]) \ \  \mbox{ and }  \]
\[ \phi_{j*} ([L_i']) = \phi_{j*} ([L_{ \sigma_i}']) + \mathfrak{vi}_j \cdot [ M_{ \sigma(i)}] \mbox{,}
\]
and for $ i = \sigma (i) $
\[ \phi_{j*} ([M_i] ) = -m \ \ \mbox{ and } \]
\[ \phi_{j*} ([L_i']) = c + \mathfrak{vi}_j \cdot m
\]
hold in the sense described in \ref{th:mainth}.
\end{cor}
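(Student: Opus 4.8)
The plan is to derive the Corollary purely formally from Lemma~\ref{lem:bases} and Theorem~\ref{th:mainth}, with no new geometric input: Lemma~\ref{lem:bases} gives the change of basis $[\Lambda_i]=[L_i']+\lambda_i[M_i]$ in $H_1(\partial N_i,\Z)$, and Theorem~\ref{th:mainth} describes $\phi_{j*}$ on the pair $([M_i],[\Lambda_i])$; substituting the first into the second and collecting the coefficient of the meridian yields precisely the definition~\eqref{eq:alphauj} of $\mathfrak{vi}_j$. So the whole proof is a two-line computation in each of the two cases, and the only care needed is bookkeeping of signs and of which meridian ($M_i$ versus $M_{\sigma(i)}$, or $m$) the correction term is attached to.

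\begin{proof}
We only have to substitute the change of basis of Lemma~\ref{lem:bases},
$[\Lambda_k]=[L_k']+\lambda_k[M_k]$ in $H_1(\partial N_k,\Z)$ for $k\in\{i,\sigma(i)\}$,
into the identities of Theorem~\ref{th:mainth} and use linearity of $\phi_{j*}$.

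\textbf{Case $i\neq\sigma(i)$.} The meridian identity $\phi_{j*}([M_i])=-\phi_{j*}([M_{\sigma(i)}])$ is unchanged. For the longitude, Theorem~\ref{th:mainth} gives
$\phi_{j*}([\Lambda_i])=\phi_{j*}([\Lambda_{\sigma(i)}])+\mathfrak{v}_j\,\phi_{j*}([M_{\sigma(i)}])$.
Replacing $[\Lambda_i]$ by $[L_i']+\lambda_i[M_i]$ and $[\Lambda_{\sigma(i)}]$ by $[L_{\sigma(i)}']+\lambda_{\sigma(i)}[M_{\sigma(i)}]$ and using $\phi_{j*}([M_i])=-\phi_{j*}([M_{\sigma(i)}])$ to rewrite the $\lambda_i[M_i]$ term, one gets
$\phi_{j*}([L_i'])=\phi_{j*}([L_{\sigma(i)}'])+(\lambda_i+\lambda_{\sigma(i)}+\mathfrak{v}_j)\,\phi_{j*}([M_{\sigma(i)}])$,
and the coefficient is $\mathfrak{vi}_j$ by \eqref{eq:alphauj}; since $\phi_{j*}([M_{\sigma(i)}])=[M_{\sigma(i)}]$ under the identification of Theorem~\ref{th:mainth} (Case 1), this is the asserted formula.

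\textbf{Case $i=\sigma(i)$.} Again $\phi_{j*}([M_i])=-m$ is unchanged. Theorem~\ref{th:mainth} gives $\phi_{j*}([\Lambda_i])=c+\mathfrak{v}_j\,m$; substituting $[\Lambda_i]=[L_i']+\lambda_i[M_i]$ and $\phi_{j*}([M_i])=-m$ yields $\phi_{j*}([L_i'])+\lambda_i(-m)=c+\mathfrak{v}_j\,m$, i.e. $\phi_{j*}([L_i'])=c+(\lambda_i+\mathfrak{v}_j)\,m=c+\mathfrak{vi}_j\cdot m$ by \eqref{eq:alphauj}.
\end{proof}

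The only genuine subtlety — hence the point a careful referee would check — is that the correction term in the $i\neq\sigma(i)$ case must be reattached to $[M_{\sigma(i)}]$ rather than split between $[M_i]$ and $[M_{\sigma(i)}]$; this is exactly what the meridian identity $\phi_{j*}([M_i])=-\phi_{j*}([M_{\sigma(i)}])$ lets us do, and it is why the two $\lambda$'s add rather than cancel. There is no real obstacle: everything takes place in fixed rank-two homology groups, and the computation is forced once Lemma~\ref{lem:bases} and Theorem~\ref{th:mainth} are granted, as remarked just before the statement (see also \ref{ss:gluinguj}).
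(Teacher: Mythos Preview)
Your proof is correct and is exactly the approach the paper indicates: the paper merely states that the corollary ``follows from \ref{lem:bases} and \ref{th:mainth}'' without spelling out the substitution, and your argument carries out precisely this two-line linear-algebra computation in each case. The bookkeeping (moving the $\lambda_i[M_i]$ term to the other side via $\phi_{j*}([M_i])=-\phi_{j*}([M_{\sigma(i)}])$, so that the coefficients add to $\lambda_i+\lambda_{\sigma(i)}+\mathfrak{v}_j=\mathfrak{vi}_j$) is handled correctly.
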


\begin{cor} The integer
$ \mathfrak{vi}_j $ does not depend on the choice of $H$, thus it is an invariant of $f$ and $ \Sigma_j$.
\end{cor}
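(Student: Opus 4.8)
\emph{Proof idea.} The plan is to exploit the fact that $\partial F$, together with its decomposition from Proposition~\ref{pr:gl}, is constructed entirely from the geometry of $\Phi$ --- the singular locus $\Sigma$, its link $\Upsilon$, the tubes $N(\Upsilon_j)$, the map $\Phi$ and the manifold $\partial F=f^{-1}(\delta)\cap S^5_\epsilon$ --- with no reference whatsoever to the auxiliary germ $H$. In particular, the gluing maps $\phi_j$ of Proposition~\ref{pr:gl}, hence the induced isomorphisms $\phi_{j*}$ on the first homology of the geometrically defined boundary tori $\partial N_i\subset\partial F$ and $\partial X_j\subset\partial F$, do not depend on $H$: the germ $H$ enters Theorem~\ref{th:mainth} only as a computational device, used to name one of the two curves (the sectional longitude $\Lambda_i$) in terms of which $\phi_{j*}$ is written. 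The idea is therefore to re-read $\phi_{j*}$ in the $H$-independent basis $(M_i,L'_i)$ of $H_1(\partial N_i,\Z)$ and conclude.

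Concretely, I would argue as follows. First recall that the meridian $M_i$ and the Seifert (topological) longitude $L'_i$ form an $H$-independent basis of $H_1(\partial N_i,\Z)$, being determined by $L_i\subset\mathfrak{S}^3$ alone; in Case~2 the curves $m,c$ form an $H$-independent basis of $H_1(\partial Y,\Z)$ (they are even intrinsic to $Y$, by Lemma~\ref{lem:UNIV}), while in Case~1 the two boundary tori $S^1\times S^1\times\{\pm1\}$ of $X_j$ are canonically identified through the product structure $X_j\simeq S^1\times S^1\times I$ (which records the $A_1$ transversal type and the vertical monodromy --- again with no $H$). Next, combine Theorem~\ref{th:mainth} with Lemma~\ref{lem:bases} exactly as in the Corollary preceding this statement: substituting $[\Lambda_i]=[L'_i]+\lambda_i[M_i]$ into the identities of Theorem~\ref{th:mainth} and using $\phi_{j*}([M_i])=-m$ (Case~2) yields
\[
\phi_{j*}([L'_i]) = c + \mathfrak{vi}_j\cdot m,
\]
and in Case~1 it yields $\phi_{j*}([L'_i]) = \phi_{j*}([L'_{\sigma(i)}]) + \mathfrak{vi}_j\cdot\phi_{j*}([M_{\sigma(i)}])$. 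In both formulas the left-hand side, together with the reference classes ($m,c$ in Case~2; $\phi_{j*}([M_{\sigma(i)}]),\phi_{j*}([L'_{\sigma(i)}])$ in Case~1), is $H$-independent, and those reference classes form a basis of the relevant homology group; reading off the coefficient of $m$ (resp. of $\phi_{j*}([M_{\sigma(i)}])$) then exhibits $\mathfrak{vi}_j$ as an $H$-independent integer attached to $\phi_{j*}$. Since $\phi_{j*}$ depends only on $f$ and on the component $\Sigma_j$, so does $\mathfrak{vi}_j$.

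The step I expect to need the most care is the assertion that $\phi_{j*}$ itself is independent of $H$, i.e.\ that the gluing of Proposition~\ref{pr:gl} is canonically pinned down (up to isotopy) by $\partial F$ and its intrinsic decomposition, and that Theorem~\ref{th:mainth}, run for two different transversal sections, computes one and the same $\phi_{j*}$. This reduces to checking that the construction in the proof of Proposition~\ref{pr:gl} produces $\phi_j$ as the literal identification of the common boundary torus $\partial N(\Upsilon_j)$ of the two geometric pieces $\partial F\cap N(\Upsilon_j)$ and $\partial F\setminus\bigcup_j\mathrm{int}\,N(\Upsilon_j)\;\cong\;\mathfrak{S}^3\setminus\bigcup_i\mathrm{int}(N_i)$ --- a datum in which $H$ never occurs --- while the curves $\Lambda_i,\tilde\Lambda_i$ appearing in Theorem~\ref{th:mainth} are merely concrete ($H$-dependent) representatives of classes whose expansion in the fixed basis $(M_i,L'_i)$ absorbs the $H$-dependence. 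I would also note that a direct analytic proof --- comparing the Laurent series $\mathfrak{b}_j$ and the divisors $D_\sharp$ for two transversal sections $H,H'$ --- is possible in principle but more delicate, since the relation between $\mathfrak{v}_j$ and the intersection numbers $D_\sharp\cdot D_i$ carries an extra contribution coming from the conversion between the local equations $f_n\circ\Phi$ and $d_i$ along the double-point curve; the homological argument above avoids this bookkeeping.
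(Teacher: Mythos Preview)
Your argument is correct and is precisely the approach the paper takes: the preceding Corollary rewrites $\phi_{j*}$ in the $H$--independent bases $(M_i,L'_i)$ and $(m,c)$ with $\mathfrak{vi}_j$ as the sole remaining coefficient, and since $\phi_{j*}$ is determined by the geometric decomposition of $\partial F$ in Proposition~\ref{pr:gl} (which involves no $H$), $\mathfrak{vi}_j$ is forced to be $H$--independent. The paper in fact gives no explicit proof for this Corollary, treating it as immediate from the previous one; your write-up spells out carefully exactly the point the paper leaves implicit, including the one genuine subtlety (that the identification of the two boundary tori of $X_j$ in Case~1, and the classes $m,c$ in Case~2, are themselves $H$--free).
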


\subsection{The plumbing graph of $ \partial F $} We construct a plumbing graph for
$ \partial F$ by modifying a good embedded resolution graph of
$ (D,0)\subset (\C^2,0) $. (For  notations see subsection \ref{ss:embres}.)
The gluing of the plumbing construction uses  different set of longitudes
(cf. \cite{neumann1}). First we define them and then we rewrite the
above established identities regarding $\phi_{j*}$ in this language.

We choose small tubular neighborhoods $ N(E_v)$ of $E_v\subset\widetilde{\C^2}$ such that
\[  \pi^{-1} (\mathfrak{S}^3)\simeq
\partial \Big( \bigcup_{ v \in V} N(E_v) \Big) \ \ (\mbox{diffeomorphic
with $\mathfrak{S}^3$ via $\pi$)}\]
after smoothing the corners of $\bigcup_{ v \in V} N(E_v) $.
The tubular neighborhood $ N(E_v)$ of $ E_v \subset \widetilde{ \C^2 } $ is a $D^2$ (real 2--disk) bundle over $ E_v$ with Euler number $ e_v $.
We can choose this bundle structure in such a way
 that $ \tilde{D}_i$ is one of the  fibres of
$ N(E_{v(i)}) $ for each $ i= 1, \dots , l$.

We choose another generic fibre $\mathcal{F}_i \simeq D^2 $ of the bundle $ N(E_{v(i)}) $ near $ \tilde{D}_i $, and we set
$ L^{\pi}_i = \pi (\partial \mathcal{F}_i) \subset \mathfrak{S}^3$.
By the choice of the bundle structure of $ N(E_{v(i)}) $ and by the choice of the fibre $ \mathcal{F}_i$ we can assume that $ L^{\pi}_i \subset \partial N_i $.
\begin{defn} $L_i^\pi \subset \partial N_i\subset \mathfrak{S}^3 $ is called the
 \emph{resolution longitude}  of
$ L_i\subset \mathfrak{S}^3 $ associated with  the resolution $\pi$.
\end{defn}

We fix a resolution longitude $ L^{\pi}_i $ for each $ L_i $.
Clearly,
\begin{equation}
  H_1 (N_i, \Z )  = \Z \langle [ L^{\pi}_i ] \rangle \ \ \mbox{and} \ \
   H_1 ( \partial N_i, \Z ) =
 \Z \langle [ L^{\pi}_i ] \rangle \oplus \Z \langle [ M_i ] \rangle.
% where $ [ \gamma ] $ denotes the homology class of a closed curve $ \gamma $.
\end{equation}
The following facts are well--known (cf. \cite{EN}).
\begin{prop}
(a) $ \lk (L^{ \pi}_i , L_i ) = m_i(v(i)) $.

 (b) $  [ L^{ \pi}_i ] = [L'_i] + m_i(v(i)) \cdot [M_i]$ holds in $ H_1 ( \partial N_i, \Z )$.
\end{prop}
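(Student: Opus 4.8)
The plan is to compute the linking number $\lk(L_i^\pi, L_i)$ directly from the resolution geometry, and then deduce the homology identity (b) from (a) by the same argument used in the proof of Lemma~\ref{lem:bases}. For part (a): recall that $L_i^\pi = \pi(\partial\mathcal{F}_i)$, where $\mathcal{F}_i$ is a generic fibre of the $D^2$-bundle $N(E_{v(i)})$ near the strict transform fibre $\tilde D_i$, and $L_i = \pi(\partial \mathcal{F}_i^0)$ essentially sits in $\partial N_i$ where $\mathcal{F}_i^0$ traces $\tilde D_i$ itself. The linking number of $L_i^\pi$ with $L_i$ in $\mathfrak{S}^3$ can be read as an intersection number downstairs: it equals the intersection multiplicity of $(D_i,0)$ with a small generic perturbation of itself of a very particular type — namely, the perturbation obtained by pushing $\tilde D_i$ off to the nearby fibre $\mathcal{F}_i$ and projecting. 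First I would observe that the cone over $L_i^\pi$ in $B^6_\epsilon$ (or rather, a Seifert surface / bounding chain for it) pulls back to the divisor-theoretic statement: $L_i^\pi$ bounds in $\widetilde{\C^2}$ the chain $\mathcal{F}_i$ which, as a fibre of $N(E_{v(i)})$, is homologous in $N(E_{v(i)})$ to $E_{v(i)}\cdot(\text{something})$ — more precisely the point is that $[\partial\mathcal{F}_i]$ as a cycle in $\pi^{-1}(\mathfrak{S}^3)$ is computed by the multiplicity data.

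The cleanest route is via the well-known identification (cf.\ \cite{EN}) of the resolution longitude with the total transform multiplicity. Concretely: consider the function $d_i$ whose total transform is ${\rm div}(d_i\circ\pi)=\sum_{v} m_i(v)E_v + \tilde D_i$. The Milnor fibre $\{d_i = \text{const}\}$ near $L_i$, when lifted by $\pi$, meets $N(E_{v(i)})$ in $m_i(v(i))$ parallel copies of a fibre-type disc together with a copy of $\tilde D_i$-parallel piece; tracing boundaries, the curve $L_i^\pi$ differs from the Seifert-framed longitude $L_i'$ (which is cut out by the Milnor fibre of $d_i$ itself, i.e.\ $\lk(L_i',L_i)=0$ by definition) precisely by the contribution of the exceptional divisor $E_{v(i)}$ along which $d_i\circ\pi$ vanishes to order $m_i(v(i))$. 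This gives $\lk(L_i^\pi,L_i)=m_i(v(i))$, which is (a). Then, exactly as in the final line of the proof of Lemma~\ref{lem:bases}, the general relation $[\gamma] = [L_i'] + \lk(\gamma,L_i)\cdot[M_i]$ in $H_1(\partial N_i,\Z)$ — valid for any longitude $\gamma$ because $[M_i]$ generates the kernel of $H_1(\partial N_i,\Z)\to H_1(N_i,\Z)$ and $[L_i']$ maps to a generator of the latter — applied to $\gamma = L_i^\pi$ yields (b) immediately.

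I expect the main obstacle to be making the bounding-chain argument for (a) fully rigorous: one must be careful that the fibre $\mathcal{F}_i$ of the tubular neighbourhood, after smoothing corners of $\bigcup_v N(E_v)$ and projecting by $\pi$, really does produce a curve whose linking with $L_i$ is the total-transform multiplicity $m_i(v(i))$ and not, say, that multiplicity shifted by a self-intersection term $e_{v(i)}$ or by contributions from $E_w$ with $w\neq v(i)$. The point is that $\mathcal{F}_i$ is a single fibre meeting only $E_{v(i)}$, and $\tilde D_i$ is itself another fibre, so in $N(E_{v(i)})$ the two are homologous and the linking is localized; the multiplicity $m_i(v(i))$ enters because $L_i$, viewed in $\pi^{-1}(\mathfrak{S}^3)$, wraps $m_i(v(i))$ times around the $S^1$-direction of the boundary of $N(E_{v(i)})$ relative to a fibre. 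Since this computation is entirely standard — it is the classical description of resolution longitudes in terms of the dual graph, and is exactly \cite{EN} — I would simply cite that reference for (a) and give only the one-line deduction of (b) from (a) as above.
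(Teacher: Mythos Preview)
Your proposal is correct and matches the paper's treatment: the paper states this proposition as ``well--known (cf.\ \cite{EN})'' and gives no proof at all, so your plan to cite \cite{EN} for (a) and deduce (b) from (a) via the standard identity $[\gamma]=[L_i']+\lk(\gamma,L_i)\cdot[M_i]$ is exactly in line with --- indeed slightly more detailed than --- what the paper does.
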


%\begin{proof}
%  Part {\em (b)} follows from part {\em  (a)}. For statement (a) we consider the %Milnor fibre $ F_{d_i}= d_i^{-1} ( \delta ) \cap B^4 $ and its boundary
%$ \partial F_{d_i} = d_i^{-1} ( \delta ) \cap S^3 $. Then
 %\[
 % \lk (L^{ \pi}_i , L_i ) =  \lk (L^{ \pi}_i , \partial F_{d_i} ) \mbox{.}
% \]
%$ \lk (L^{ \pi}_i , \partial F_{d_i} ) ) $ is equal to the algebraic number
%of the intersection points of $ {\pi}(\mathcal{F}_i) $ and $ F_{d_i} $.
%That is the same to take the algebraic number of intersection points of
%$ \mathcal{F}_i $ and ${\pi}^{-1} ( F_{d_i} ) $ in $ \widetilde{ \C^2 }$,
%because $ 0 \notin F_{d_i} $. $ \mathcal{F}_i $ and ${\pi}^{-1} ( F_{d_i} ) $
%intersect each other transversally in $ m_i (v(i)) $ points, each of them
%has positive sign.
%
%\end{proof}

\begin{cor}\label{cor:REL}
 $  [ L^{ \pi}_i ] = [\Lambda_i] + ( m_i(v(i)) - \lambda_i ) \cdot [M_i]$ holds in $ H_1 ( \partial N_i, \Z )$.
\end{cor}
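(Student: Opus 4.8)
The plan is to combine the two comparison formulas already in hand, namely Lemma~\ref{lem:bases}, which reads $[\Lambda_i]=[L'_i]+\lambda_i\cdot[M_i]$, and part (b) of the preceding Proposition, which reads $[L^{\pi}_i]=[L'_i]+m_i(v(i))\cdot[M_i]$; both equalities hold in $H_1(\partial N_i,\Z)$. Since the three parametrizations of $\partial N_i\approx S^1\times S^1$ (topological, sectional, resolution) share the same meridian class $[M_i]$, and since $H_1(\partial N_i,\Z)$ is free of rank two with $[M_i]$ one of the generators, it suffices to eliminate the auxiliary class $[L'_i]$ between the two displayed identities.

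Concretely, I would subtract the first identity from the second: from $[L^{\pi}_i]=[L'_i]+m_i(v(i))\cdot[M_i]$ and $[L'_i]=[\Lambda_i]-\lambda_i\cdot[M_i]$ (the rewriting of Lemma~\ref{lem:bases}) one obtains
\[
[L^{\pi}_i]=[\Lambda_i]-\lambda_i\cdot[M_i]+m_i(v(i))\cdot[M_i]=[\Lambda_i]+\bigl(m_i(v(i))-\lambda_i\bigr)\cdot[M_i],
\]
which is exactly the asserted formula. There is essentially nothing further to check: both inputs are already established earlier in the excerpt, the ambient group is free abelian, and the manipulation is a single linear substitution.

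The only point that deserves a word of care — and I expect it to be the sole (minor) obstacle — is making sure the orientation conventions for the three longitudes and for $[M_i]$ are consistently those used in Lemma~\ref{lem:bases} and in the Proposition, so that the signs of $\lambda_i$ and of $m_i(v(i))$ combine as written rather than cancelling or doubling; once one has fixed the same generator $[M_i]$ (with $\lk(M_i,L_i)=1$) throughout and read off both comparison coefficients against it, the identity follows immediately. I would state the proof in one or two lines, simply citing Lemma~\ref{lem:bases} and the Proposition and performing the subtraction.
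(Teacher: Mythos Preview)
Your proposal is correct and matches the paper's approach exactly: the corollary is stated without proof precisely because it follows by the one-line linear substitution you describe, combining Lemma~\ref{lem:bases} with part~(b) of the preceding Proposition to eliminate $[L'_i]$.
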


\begin{defn}\label{de:alpha}
For any $ j=\{i,\sigma(i)\}$ define $ \alpha_j $ by:
 \begin{equation}\labelpar{eq:alpha}
  \alpha_j = \left\{ \begin{array}{ccc}
                      - m_i(v(i)) + \lambda_i -m_{ \sigma (i)} ( v( \sigma (i))) + \lambda_{ \sigma (i)} +  \mathfrak{v}_j  & \mbox{ if } & i \neq \sigma (i) \\
                     \lambda_i -  m_i(v(i)) + \mathfrak{v}_j & \mbox{ if } & i = \sigma (i). \\
                     \end{array} \right.
 \end{equation}
\end{defn}
% Note that $ \alpha_i = \alpha_{ \sigma (i)} $.
 Then Theorem \ref{th:mainth} and Corollary \ref{cor:REL} give the following.

\begin{cor}\label{cor:identities} {\bf Case 1:} \
 For $ i \neq \sigma (i) $ in $ H_1 (S^1\times S^1, \Z )$
 the following identities  hold:
  \[ \phi_{j*} ([M_i]) = - \phi_{j*} ([M_{\sigma(i)}])\ \ \mbox{ and } \ \
  \phi_{j*} ([L^{ \pi}_i ]) =
 \phi_{j * } ( [L^{ \pi}_{ \sigma (i)}] + \alpha_j  \cdot [M_{ \sigma (i) }]).
 \]
 {\bf Case 2:} \
  For $ i= \sigma (i) $ in $ H_1 (\partial Y, \Z) $ the following
  identities hold:
 \[ \phi_{j*} ([M_i] ) = -m  \ \ \mbox{and} \ \
 \phi_{j *} ( [L^{ \pi }_i] ) = c + \alpha_j \cdot m.
 \]
\end{cor}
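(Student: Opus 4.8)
The plan is to deduce Corollary~\ref{cor:identities} from Theorem~\ref{th:mainth} purely by the change-of-longitude formula of Corollary~\ref{cor:REL}, and to check that the resulting correction terms collapse to the integer $\alpha_j$ of Definition~\ref{de:alpha}.

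First, the meridian identities require no new input: passing from the sectional longitudes $[\Lambda_i]$ to the resolution longitudes $[L^\pi_i]$ does not touch the meridians $[M_i]$, so the relations $\phi_{j*}([M_i])=-\phi_{j*}([M_{\sigma(i)}])$ in Case~1 and $\phi_{j*}([M_i])=-m$ in Case~2 are literally those already established in Theorem~\ref{th:mainth}.

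For the longitude identity in Case~1 I would start from Corollary~\ref{cor:REL} applied both at $i$ and at $\sigma(i)$, namely $[L^\pi_i]=[\Lambda_i]+(m_i(v(i))-\lambda_i)[M_i]$ and $[\Lambda_{\sigma(i)}]=[L^\pi_{\sigma(i)}]-(m_{\sigma(i)}(v(\sigma(i)))-\lambda_{\sigma(i)})[M_{\sigma(i)}]$, apply $\phi_{j*}$, and substitute the second identity of Theorem~\ref{th:mainth}, obtaining
\begin{align*}
 \phi_{j*}([L^\pi_i]) &= \phi_{j*}([L^\pi_{\sigma(i)}]) + \bigl(\lambda_{\sigma(i)}-m_{\sigma(i)}(v(\sigma(i)))+\mathfrak{v}_j\bigr)\,\phi_{j*}([M_{\sigma(i)}]) \\
 &\qquad + (m_i(v(i))-\lambda_i)\,\phi_{j*}([M_i]).
\end{align*}
The meridian sign relation $\phi_{j*}([M_i])=-\phi_{j*}([M_{\sigma(i)}])$ then lets me rewrite the last summand over the single class $\phi_{j*}([M_{\sigma(i)}])$; its coefficient becomes $-m_i(v(i))+\lambda_i-m_{\sigma(i)}(v(\sigma(i)))+\lambda_{\sigma(i)}+\mathfrak{v}_j$, which is precisely $\alpha_j$ by~\eqref{eq:alpha}. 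Case~2 is the same substitution with no sign juggling: $\phi_{j*}([L^\pi_i])=\phi_{j*}([\Lambda_i])+(m_i(v(i))-\lambda_i)\phi_{j*}([M_i])=(c+\mathfrak{v}_j m)+(m_i(v(i))-\lambda_i)(-m)=c+\alpha_j m$, again by~\eqref{eq:alpha}.

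I do not expect a genuine obstacle: the statement is a bookkeeping consequence of the two cited results. The only points demanding care are that in Case~1 the two tori carrying $[\Lambda_i]$ and $[\Lambda_{\sigma(i)}]$ have already been homologically identified in the proof of Theorem~\ref{th:mainth}, so the two instances of Corollary~\ref{cor:REL} may legitimately be combined, and that all correction terms must be collected onto one fixed meridian class via $\phi_{j*}([M_i])=-\phi_{j*}([M_{\sigma(i)}])$ --- getting this sign right is exactly what produces the signs of $\lambda_i$ and $m_i(v(i))$ appearing in $\alpha_j$.
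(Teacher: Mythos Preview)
Your proposal is correct and follows exactly the approach the paper indicates: the paper simply states that the corollary follows from Theorem~\ref{th:mainth} and Corollary~\ref{cor:REL}, and you have supplied precisely that substitution, handling the meridian sign in Case~1 and collecting the correction terms into $\alpha_j$ as in Definition~\ref{de:alpha}.
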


\subsection{The construction of the plumbing graph.} From the
embedded resolution graph
$ \Gamma $ of $ (D, 0) \subset ( \C^2, 0) $ associated with
the resolution ${\pi}$ we construct a plumbing graph $\widehat{\Gamma }$.

Recall that $\Gamma$ has $l$ arrowhead  vertices representing the strict transforms
of $\{D_i\}_{i=1}^l$, and the $i$-th arrowhead is supported (via a unique edge) by
the vertex $v(i)\in V$. We obtain the plumbing graph $\widehat{\Gamma}$ from $\Gamma$ as follows.

(1) Fix $j=\{i,\sigma(i)\}$ and assume that $i\not=\sigma(i)$. Then we identify the two
arrowheads and we replace it by a single new vertex of $\widehat{\Gamma}$.
 We define the Euler number of the new  vertex by
$\alpha_j$. Both two edges (which in $\Gamma$ supported  the arrowheads) will survive as edges of this new vertex
(connecting it with $v(i)$ and $v(\sigma(i))$ respectively),
however one of them will have a negative sign, the other one a
positive sign.  By plumbing calculus, cf. \cite[Prop. 2.1. R0(a)]{neumann1},
the choice of the edge which has negative sign -- denoted by  $\ominus$ --  is irrelevant. (The edges without any decorations, by convention, are edges
 with positive sign.)

(2) Fix $j=\{i,\sigma(i)\}$ such that $i=\sigma(i)$. Then the arrowhead associated with
$i=\sigma(i)$ will be replaced by a new vertex and it will be decorated by Euler number $\alpha_j$.
Furthermore, to this new vertex
we attach the plumbing graph of $Y$ (cf. \ref{ss:plY}) as indicated below.
The  edge connecting the
new vertex and the graph of $Y$  will have a negative sign $\ominus$.

(3) We do all these modification for all $j\in J$, otherwise we keep the shape and decorations of $\Gamma$.

\vspace{2mm}

More precisely,
if the schematic picture of $\Gamma$ is the following,

\begin{picture}(300,100)(-20,0)
\put(0,10){\framebox(130,80)}
\put(120,20){\vector(1,0){40}}
\put(120,28){\makebox(0,0){$e_{v(i')} $}}
\put(120,20){\circle*{4}}
\put(160,40){\makebox(0,0){$ \vdots $}}
\put(190,20){\makebox(0,0){$(\tilde{D}_{i'})$}}
\put(40,50){\makebox(0,0){$\Gamma$}}

\put(120,50){\vector(1,0){40}}
\put(115,58){\makebox(0,0){$e_{v(\sigma(i))} $}}
\put(120,50){\circle*{4}}
\put(190,50){\makebox(0,0){$(\tilde{D}_{\sigma(i)})$}}
\put(120,70){\vector(1,0){40}}
\put(117,78){\makebox(0,0){$e_{v(i)} $}}
\put(120,70){\circle*{4}}
\put(190,70){\makebox(0,0){$(\tilde{D}_i)$}}

\put(250,20){\makebox(0,0)[l]{$(j'=\{i',\sigma(i')\}, \ i'=\sigma(i'))$}}

\put(250,60){\makebox(0,0)[l]{$(j=\{i,\sigma(i)\}, \ i\not=\sigma(i))$}}

\end{picture}

 \noindent then the schematic picture of $\widehat{\Gamma}$ is

 \begin{picture}(300,105)(-20,-5)
\put(0,10){\framebox(130,80)}
\put(120,20){\line(1,0){80}}
\put(120,28){\makebox(0,0){$e_{v(i')} $}}
\put(120,20){\circle*{4}}

\put(160,20){\circle*{4}}
\put(200,20){\circle*{4}}
\put(230,35){\circle*{4}}
\put(230,5){\circle*{4}}
\put(200,20){\line(2,1){30}}
%\put(60,60){\line(1,0){40}}
\put(200,20){\line(2,-1){30}}
\put(200,10){\makebox(0,0){$-1$}}
\put(242,5){\makebox(0,0){$-2$}}
\put(242,35){\makebox(0,0){$-2$}}
\put(160,10){\makebox(0,0){$ \alpha_{j'}$}}
\put(181,25){\makebox(0,0){$\ominus$}}
\put(160,40){\makebox(0,0){$ \vdots $}}

\put(120,50){\line(4,1){40}}
\put(115,58){\makebox(0,0){$e_{v(\sigma(i))} $}}
\put(120,50){\circle*{4}}\put(160,60){\circle*{4}}
\put(160,70){\makebox(0,0){$\alpha_j$}}
\put(120,70){\line(4,-1){40}}
\put(117,78){\makebox(0,0){$e_{v(i)} $}}
\put(120,70){\circle*{4}}

\put(-20,50){\makebox(0,0){$\widehat{\Gamma}:$}}

\put(145,50){\makebox(0,0){$\ominus$}}

\end{picture}

In fact, by plumbing calculus (cf. \cite[Prop. 2.1. R0(a)]{neumann1}),  whenever $i=\sigma(i)$
the edge sign from the newly created `branch' (subtree) can be omitted, however at this point we put it since this is
the graph provided by the proof (which reflects properly the corresponding base changes).

Note also that usually the graph $\widehat{\Gamma}$ (that is, the associated intersection form) is not negative definite (or,  it is not
even equivalent via plumbing calculus by a negative definite graph).

Above (when $i\not=\sigma(i)$) the coincidence $v(i)=v(\sigma(i))$ might happen, in fact, in all the cases we
analyzed this coincidence (on minimal graph) does happen.

\begin{thm}
 The plumbing $3$-manifold associated with the plumbing graph
 $ \widehat{\Gamma} $ is orientation preserving diffeomorphic with $ \partial F $.
\end{thm}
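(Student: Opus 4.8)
The plan is to compare, piece by piece, the surgery description of $\partial F$ furnished by Proposition~\ref{pr:gl} together with Corollary~\ref{cor:identities} against the \emph{local} structure of the plumbed $3$--manifold $M^3(\widehat\Gamma)$. First I would recall the standard description (cf.\ \cite{neumann1,EN}) of the plumbed $4$--manifold $M^4(\Gamma)$ attached to the arrowhead--decorated embedded resolution graph $\Gamma$ of \ref{ss:embres}: its boundary is $\mathfrak{S}^3$, the $i$--th arrowhead is carried by a knot isotopic to $L_i$, and the plumbing construction endows $\partial N_i$ with its canonical meridian--longitude basis $(M_i,L^\pi_i)$, where $L^\pi_i$ is precisely the resolution longitude (this is the content of the fact that $\lk(L^\pi_i,L_i)=m_i(v(i))$, recalled just before Corollary~\ref{cor:REL}). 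Deleting the open arrowhead neighbourhoods from $M^3(\Gamma)$ therefore yields $\mathfrak{S}^3\setminus\bigcup_i\mathrm{int}(N_i)$, with the framings $(M_i,L^\pi_i)$ on the boundary tori --- exactly the first factor in \eqref{eq:mb}, parametrized as in Corollary~\ref{cor:identities}.

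Next I would unwind, using the locality of plumbing and Neumann's calculus \cite{neumann1}, what the modification $\Gamma\rightsquigarrow\widehat\Gamma$ does near each arrowhead. For $j=\{i,\sigma(i)\}$ with $i\neq\sigma(i)$ the new genus--$0$ two--valent vertex with Euler number $\alpha_j$ contributes the $S^1$--bundle over the annulus with relative Euler number $\alpha_j$; as a manifold this is a copy of $S^1\times S^1\times I$, glued to $\partial N_i\sqcup\partial N_{\sigma(i)}$ so that the $S^1$--fibre is identified with $M_i$ on the $v(i)$--side and with $-M_{\sigma(i)}$ on the $v(\sigma(i))$--side (the sign being forced by the $\ominus$ decoration, whose position is irrelevant by plumbing calculus), while a base section maps to $L^\pi_i$ on the first side and to the twisted section $L^\pi_{\sigma(i)}+\alpha_j M_{\sigma(i)}$ on the second. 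For $j=\{i,\sigma(i)\}$ with $i=\sigma(i)$ the branch $\{-1;-2,-2\}$ attached below is, by \ref{ss:plY}, the manifold $Y\simeq\bar{Y}\setminus N(K)^{\circ}$ with its canonical framing $(m,c)$ of $\partial Y$; inserting the genus--$0$ vertex of Euler number $\alpha_j$ between it and $v(i)$ (with the connecting edge $\ominus$--decorated) splices $Y$ onto $\partial N_i$ so that $M_i\mapsto -m$ and $L^\pi_i\mapsto c+\alpha_j m$, the residual sign/orientation choices being absorbed by the splicing invariance $Y\simeq -Y$ noted at the end of Section~\ref{s:Y}.

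These two gluing prescriptions are literally the homological identities of Corollary~\ref{cor:identities} (Case~1 and Case~2 respectively), and the inserted pieces $X_j$ --- $S^1\times S^1\times I$, respectively $Y$ --- are the same as in Proposition~\ref{pr:gl}. Since an orientation--reversing self--diffeomorphism of the $2$--torus is determined up to isotopy by its action on $H_1(T^2,\Z)$, the gluing maps $\phi_j$ agree up to isotopy with the plumbing gluings; hence the two closed $3$--manifolds are diffeomorphic, and a check of the orientation conventions (the minus sign on $\partial(\bigcup_j X_j)$ in Proposition~\ref{pr:gl} versus the orientation of $M^3(\widehat\Gamma)$) shows the diffeomorphism is orientation preserving.

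The main obstacle is the second step: translating Neumann's combinatorial plumbing rules into the precise statement that the new $\alpha_j$--vertex realizes the gluing of $S^1\times S^1\times I$ (resp.\ of $Y$, spliced along $K$) by a matrix of the form \eqref{eq:glue} with $n_j=\alpha_j$, and verifying that this is compatible both with the framing $(M_i,L^\pi_i)$ coming from the resolution and with the canonical framing $(m,c)$ of $\partial Y$ described in \ref{ss:plY}, all while tracking orientations carefully. Once this dictionary is in place the remaining verifications are purely formal.
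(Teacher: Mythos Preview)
Your proposal is correct and follows essentially the same route as the paper: both arguments reduce the statement to Corollary~\ref{cor:identities} and then invoke Neumann's plumbing calculus (specifically \cite[pp.~318--320]{neumann1}) together with the identification of $Y$ with its plumbing graph from \ref{ss:plY}. The only substantive difference is that the paper makes your ``main obstacle'' concrete by writing out the explicit matrix factorization
\[
\begin{pmatrix}-1 & \alpha_j\\ 0& 1\end{pmatrix}=
\begin{pmatrix}0 & 1\\ 1& 0\end{pmatrix}
\begin{pmatrix}-1 & 0\\ -\alpha_j& 1\end{pmatrix}
\begin{pmatrix} 0 & -1 \\ -1& 0\end{pmatrix},
\]
which in Neumann's formalism is precisely the insertion of a single new vertex (a length--one string) with Euler number $\alpha_j$ and one $\ominus$--edge, thereby dispatching both Case~1 and Case~2 in one stroke.
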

\begin{proof}
This follows from Corollary \ref{cor:identities} and the relationships between this base--change
and the plumbing construction as it is described  in \cite{neumann1}, pages 318--319.
In the case $i=\sigma(i)$ the plumbing graph of $Y$ from \ref{ss:plY} should be also used.
Note that in both cases Corollary \ref{cor:identities} provides a base change matrix
from the left hand side of (\ref{eq:matrix}), this decomposes as a product as in the right hand side.
\begin{equation}\label{eq:matrix}
\begin{pmatrix}-1 & \alpha_j\\ 0& 1\end{pmatrix}=
\begin{pmatrix}0 & 1\\ 1& 0\end{pmatrix}
\begin{pmatrix}-1 & 0\\ -\alpha_j& 1\end{pmatrix}
\begin{pmatrix} 0 & -1 \\ -1& 0\end{pmatrix}.
\end{equation}
This, according to \cite[pages 318--320]{neumann1} is interpreted as a `gluing' by a string (with length one),
this is the new vertex (for each $j$) given by the construction of $\widehat{\Gamma}$.
\end{proof}

%$ \widetilde{ B^4} := p^{-1} (B^4) \subset \widetilde{ \C^2} $ is the union of tubular neighborhoods of the exceptional divisors $ E_v \subset \widetilde{ \C^2 } $.
%$ \partial \widetilde{ B^4} \approx S^3 $ can be optained by plumbing construction along the graph $ G$.

% Each vertex $ v \in V $ marks an $ S^1 $-bundle over an exceptional divisor $\C P^1 \approx S^2 $ with Euler number $ e(v) \in \Z $. The arrows $a_1, \dots , a_l \in A $ correspond to the components $ L_i $ of the link $ \tilde{D}$. Each vertex $ v \in V $ is endowed with the multiplicities $ m_1(v), \dots, m_l(v) \in \N $ corresponding to the components $ \tilde{ \Sigma }_i $ od $ \tilde{ \Sigma } $. The edge $ e \in E $ connecting the vertexes $ v $ and $ v' $ marks the gluing of the total spaces of the bundles corresponding to $ v $ and $ v' $ along the boundaries of cutted solid tori. Each edge is endowed with a sign $ \oplus $ or $ \ominus $ which describes the type of the gluing.

% For an arrow $ a_i \in V $ let $ v(a_i) \in V $ denote the vertex where the arrow starts. $ L_i $ is an $ S^1$-fibre of the bundle corresponding to $ v(a_i) $.

% if $ v=v(i) $, and
% \[
% e_v \cdot m_i(v(i)) + \Sigma_{w \in V} (E_{v(i)} \cdot E_w) \cdot m_i(w) = 0 \mbox{.}\]

\section{The vertical index for $\Sigma^{1,0}$ type germs}

\subsection{$\Sigma^{1,0}$ type germs} Assume that $\Phi(s, t) = ( s, t^2, t d(s,t) ) $, where $d(s,t) = g(s, t^2) $ for  some germ $g$, such that $d(s,t)$  is not divisible by $t$. In this case the equation of the image is $f= yg^2(x,y) - z^2 =0 $. These germs (more precisely, their $ \mathcal{A}$-equivalence classes) are labeled by the Boardman symbol $ \Sigma^{1,0}$, and they are exactly the corank--$1$ map germs with no triple points in their stabilization,
see \cite{Mond0,nunodoodle} for details.

 The set of double points is $D = \{ (s, t) \in \C^2 \ | \  d(s,t)= 0 \}$. It is
 equipped with the involution $ \iota: D \to D $, $ \iota (s, t) = (s, -t) $.
 The set of double values is
 \[ \Sigma (f) = \{ (x, y, z) \in \C^3 \ | \  z=0 \mbox{ and } g(x,y) =0 \} \mbox{.}\]

There are several options for the choice of the  transversal section. E.g.,
Proposition \ref{pr:H} works with $a_1=a_2=a_3=1$. Furthermore,  one can also take
$H_2 (x, y, z) = z $ or $  H_3 (x, y, z) = g(x, y)$.

If we take $H(x,y,z)=z$ then the $H$--vertical indexes
$ \mathfrak{v}_j$ are zero. This fact  follows directly from the product decomposition of $f$.
(However, for different other choices it can be nonzero as well.)

\begin{prop} For $ \Sigma^{1,0} $--type germs
$ \Sigma_j \mathfrak{vi}_j = -  \Sigma_{i \neq k} D_i \cdot D_k - C( \Phi ) $, where $ C(\Phi) $ is the number of crosscaps of a stabilization of $ \Phi$, cf. \cite{NP, Mond2}.
\end{prop}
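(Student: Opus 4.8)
The plan is to work throughout with the transversal section $H(x,y,z)=z$. As recorded in the text just before the statement, this choice forces all $H$--vertical indexes to vanish, $\mathfrak{v}_j=0$ for every $j\in J$ (it comes directly from the local product splitting of $f$ along $\Sigma^*$, where $z$ equals, up to the unit ambiguity, half the difference of the two local branches, so that the ratio $\mathfrak{b}_j$ is a nonzero constant). By \eqref{eq:alphauj} this already gives $\mathfrak{vi}_j=\lambda_i+\lambda_{\sigma(i)}$ when $i\neq\sigma(i)$ and $\mathfrak{vi}_j=\lambda_i$ when $i=\sigma(i)$, with $\lambda_i=-\sum_{k\neq i}D_k\cdot D_i-D_\sharp\cdot D_i$ as in Lemma~\ref{lem:bases}. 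The first step is to identify $D_\sharp$: since $\Phi^{*}(H)=H\circ\Phi=t\,d(s,t)$ and $d$ is the reduced equation of $D$, not divisible by $t$ by hypothesis, we get $d_\sharp=t$ up to a unit, hence $D_\sharp=\{t=0\}$.

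Next, summing over $J$ and using that the pairs $\{i,\sigma(i)\}$ partition $\{1,\dots,l\}$ (so each $\lambda_i$ is counted once), together with additivity of intersection multiplicities, $\sum_iD_\sharp\cdot D_i=D_\sharp\cdot D$ with $D=\sum_iD_i$, one obtains
\[
\sum_{j\in J}\mathfrak{vi}_j=\sum_{i=1}^{l}\lambda_i=-\sum_{i\neq k}D_i\cdot D_k-D_\sharp\cdot D.
\]
Hence the proposition reduces to the single identity $D_\sharp\cdot D=\{t=0\}\cdot\{d=0\}=C(\Phi)$.

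To prove this I would use two complementary descriptions. On one hand, $C(\Phi)$ equals the colength $\dim_\C\calO_{\C^2,0}/\mathrm{Ram}(\Phi)$ of the ramification ideal of $\Phi$ (generated by the $2\times 2$ minors of the Jacobian $d\Phi$): in a stabilization these critical points become exactly $C(\Phi)$ reduced Whitney umbrellas by conservation of number; equivalently, they are the branch points of the double cover $D_\delta\to\Sigma_\delta$, i.e. the fixed points of the deck involution $(s,t)\mapsto(s,-t)$ on $D_\delta$, whose fixed locus is precisely $\{t=0\}=D_\sharp$. On the other hand, since $\Phi$ has corank one, $\mathrm{Ram}(\Phi)=\bigl(\partial_t(t^2),\,\partial_t(t\,d)\bigr)=\bigl(2t,\ d+t\,\partial_td\bigr)=\bigl(t,\ g(s,0)\bigr)$ in $\calO_{\C^2,0}$, using $d(s,t)=g(s,t^2)$; so $C(\Phi)=\dim_\C\calO_{\C^2,0}/(t,g(s,0))=\mathrm{ord}_s g(s,0)$. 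And $D_\sharp\cdot D=\dim_\C\calO_{\C^2,0}/(t,\,g(s,t^2))=\dim_\C\calO_{\C^2,0}/(t,\,g(s,0))=\mathrm{ord}_s g(s,0)$ as well (note $g(s,0)\not\equiv0$, otherwise $d=g(s,t^2)$ would be divisible by $t^2$, contrary to the hypothesis), so $D_\sharp\cdot D=C(\Phi)$.

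The routine parts are the vanishing $\mathfrak{v}_j=0$, the sum over $J$, and the two colength computations. The main obstacle is the clean identification $C(\Phi)=(D\cdot D_\sharp)_0$: one must invoke (or reprove) the classical fact that the crosscap number of a stabilization equals the colength of the ramification ideal of $\Phi$ (see e.g.\ \cite{NP, Mond2} and the Marar--Mond multiple-point formulas), and then observe that the ramification scheme of $\Phi$ has the same length as the scheme $D\cap D_\sharp$, which is exactly the computation displayed above.
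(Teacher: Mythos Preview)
Your proof is correct and follows essentially the same route as the paper's: choose $H=z$ so that $\mathfrak{v}_j=0$ and $d_\sharp=t$, sum $\sum_j\mathfrak{vi}_j=\sum_i\lambda_i=-\sum_{i\neq k}D_i\cdot D_k-D\cdot\{t=0\}$, and identify $D\cdot\{t=0\}=\dim_\C\calO_{(\C^2,0)}/(t,d)$ with $C(\Phi)$ via the Jacobian ideal. Your version is in fact a bit more explicit than the paper's in two places: you spell out why $\mathfrak{b}_j$ is a nonzero constant (hence $\mathfrak{v}_j=0$), and you actually compute the ramification ideal $(2t,\,d+t\partial_t d)=(t,g(s,0))$ rather than merely asserting it is generated by $t$ and $d$; both computations are correct and recover the same colength.
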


\begin{proof}
 With the choice of $ H=z$, $ \Phi^*(H)= td(s,t) $ and $ \mathfrak{v}_j=0$. If $i \neq \sigma(i)$, then
\[ \mathfrak{vi}_j= -\Sigma_{k \not \in \{i,\sigma(i)\}}
( D_i + D_{\sigma(i)}) \cdot D_k - (D_i+D_{\sigma(i)}) \cdot \{t=0\}, \]
while for  $i= \sigma(i) $  (see \ref{ss:topverindex}) one has
\[ \mathfrak{vi}_j= -\Sigma_{k \neq i} D_i \cdot D_k - D_i \cdot \{t=0\}.
 \]
Taking the sum for all $j$ we get the identity, once we verify
 \[ \Sigma_{i=1}^l D_i \cdot \{t=0 \} = D \cdot \{t=0\} =
  \dim_{ \C} \frac{ \mathcal{O}_{ ( \C^2, 0)}}{ \langle t, d(s,t) \rangle} \mbox{.}
 \]
 Here, the last identity follows from the algebraic definition of $C( \Phi)$, as the codimension of the Jacobian ideal of $ \Phi $ (cf. \cite{Mond2, NP, nunodouble}). In our case this ideal is genereted by $ t$ and $d(s, t) $.
 \end{proof}

\section{Examples}

In the next paragraphs we provide some concrete examples. The families are taken from
D. Mond's list  of simple germs  \cite[Table 1]{Mond2}. In the sequel
we provide the resolution graph of $D$ and the plumbing graph of $\partial F$. The  computations are left to the reader.

The last two examples are more special than the previous ones: they are not of type
$\Sigma^{1,0}$. In the first family ($ H_k $ from the list \cite[Table 1]{Mond2})
the calculation  has some nontrivial steps, hence
 we provide more details. The last example in \ref{ss:2cor} is a corank
 $2$ map germ from \cite{marar}.

\subsection{Whitney umbrella, or cross--cap} $\Phi (s, t) = (s, t^2, ts) $ and
$ f(x, y, z) = x^2y - z^2 =0$.
 The graph of $D$ is on the left, while
 our algorithm provides the  graph from the right for
$ \partial F$

\begin{picture}(300,70)(-160,25)
 \put(-100,60){\circle*{4}}
                         \put(-100,60){\vector(1,0){30}}
                         \put(-105,70){\makebox(0,0){$-1$}}

                        \put(100,60){\circle*{4}}
                           \put(20,60){\circle*{4}}
                        \put(60,60){\circle*{4}}
                         \put(130,75){\circle*{4}}
                         \put(130,45){\circle*{4}}
                         \put(100,60){\line(2,1){30}}
                         \put(20,60){\line(1,0){40}}
                         \put(60,60){\line(1,0){40}}
                         \put(100,60){\line(2,-1){30}}
                         \put(95,70){\makebox(0,0){$-1$}}
                          \put(15,70){\makebox(0,0){$-1$}}
                          \put(143,75){\makebox(0,0){$-2$}}
                          \put(58,70){\makebox(0,0){$ -2 $}}
                           \put(143,45){\makebox(0,0){$-2$}}
                           \end{picture}

\noindent which after plumbing calculus transforms into
                           \begin{picture}(20,10)(50,58)
                        \put(60,60){\circle*{4}}
                          \put(58,70){\makebox(0,0){$ -4 $}}
                           \end{picture}
(Compare also with Example 10.4.2 from \cite{NSz}.)

\subsection{\bf{$S_1$}}
%\marginpar{mathbf}
Set
$ \Phi(s, t) = ( s, t^2, t^3 + s^2t ) $, hence
$f(x, y, z) = y(y+x^2)^2 - z^2$.
The graph of $D$ is the first diagram, while the other two equivalent
graphs represent $\partial F$

\begin{picture}(300,60)(-50,40)
 \put(0,60){\circle*{4}}
                         \put(0,60){\vector(2,1){30}}
                         \put(0,60){\vector(2,-1){30}}
                         \put(-5,70){\makebox(0,0){$-1$}}
                          \put(-5,50){\makebox(0,0){$v$}}

                         \put(100,60){\circle*{4}}
                       \put(140,60){\circle*{4}}
                       % \put(120,60){\circle{40}}
 \qbezier(100,60)(120,100)(140,60)\qbezier(100,60)(120,20)(140,60)
                         \put(92,70){\makebox(0,0){$-1$}}
                             \put(148,70){\makebox(0,0){$-6$}}
                              \put(120,88){\makebox(0,0){$ \ominus $}}
                            %  \put(120,32){\makebox(0,0){$ \oplus $}}                 %\end{picture}
 \put(170,60){\makebox(0,0){$ \mbox{or} $}}
%\begin{picture}(300,100)(-50,10)
                       \put(240,60){\circle*{4}}
                        %  \put(220,60){\circle{40}}
  \qbezier(240,60)(205,90)(205,60)  \qbezier(240,60)(205,30)(205,60)
                             \put(248,70){\makebox(0,0){$-4$}}
                              \put(195,60){\makebox(0,0){$ \ominus $}}
                         \end{picture}

\subsection{The family $S_{k-1} $, $k\geq 2$}
%For $k=1$ by $S_0$ one recovers the  gross--cap.
 One has $ \Phi(s, t) = ( s, t^2, t^3 + s^k t )$ and
$ f(x, y, z) = y(y+x^k)^2 - z^2 =0 $.

 \vspace{2mm}

 \emph{Case 1}: $k=2n$.

 \vspace{2mm}

\begin{picture}(300,40)(-150,45)
                         \put(100,60){\circle*{4}}
                          \put(60,60){\circle*{4}}
                           \put(0,60){\circle*{4}}
                           \put(60,60){\line(1,0){40}}
                             \put(40,60){\line(1,0){20}}
                             \put(0,60){\line(1,0){20}}
                              \put(-40,60){\circle*{4}}
                              \put(-40,60){\line(1,0){40}}
                         \put(100,60){\vector(2,1){30}}
                         \put(100,60){\vector(2,-1){30}}
                         \put(95,70){\makebox(0,0){$-1$}}
                          \put(57,70){\makebox(0,0){$-2$}}
                           \put(-3,70){\makebox(0,0){$-2$}}
                            \put(-43,70){\makebox(0,0){$-2$}}
                         % \put(93,50){\makebox(0,0){$w_n$}}
                         % \put(-43,50){\makebox(0,0){$w_1$}}
                         % \put(-3,50){\makebox(0,0){$w_2$}}
                          \put(29,57){\makebox(0,0){$\dots$}}
                         % \put(60,50){\makebox(0,0){$w_{n-1}$}}
                           \put(-100,60){\makebox(0,0){$D:$}}
                         \end{picture}

\begin{picture}(300,60)(-150,40)
                         \put(100,60){\circle*{4}}
                          \put(60,60){\circle*{4}}
                           \put(0,60){\circle*{4}}
                           \put(140,60){\circle*{4}}
                       %    \put(120,60){\circle{40}}
\qbezier(100,60)(120,100)(140,60)\qbezier(100,60)(120,20)(140,60)
                           \put(60,60){\line(1,0){40}}
                             \put(40,60){\line(1,0){20}}
                             \put(0,60){\line(1,0){20}}
                              \put(-40,60){\circle*{4}}
                              \put(-40,60){\line(1,0){40}}
                         \put(95,70){\makebox(0,0){$-1$}}
                          \put(57,70){\makebox(0,0){$-2$}}
                           \put(-3,70){\makebox(0,0){$-2$}}
                            \put(-43,70){\makebox(0,0){$-2$}}
                             \put(146,70){\makebox(0,0){$-3k$}}
                               \put(120,88){\makebox(0,0){$\ominus$}}
                             \put(29,57){\makebox(0,0){$\dots$}}
\put(-100,60){\makebox(0,0){$\partial F$:}}
                         \end{picture}

\noindent Here the number of $ (-2)$-vertices is $n-1$.

 \vspace{2mm}

 \emph{Case 2}: $k=2n+1$.

 \vspace{2mm}

\begin{picture}(300,50)(-100,40)
 \put(140,60){\circle*{4}}
  \put(-100,60){\makebox(0,0){$D:$}}
                         \put(100,60){\circle*{4}}
                          \put(60,60){\circle*{4}}
                           \put(0,60){\circle*{4}}
                            \put(140,30){\circle*{4}}
                           \put(60,60){\line(1,0){40}}
                             \put(40,60){\line(1,0){20}}
                             \put(0,60){\line(1,0){20}}
                              \put(-40,60){\circle*{4}}
                              \put(-40,60){\line(1,0){40}}
                              \put(100,60){\line(1,0){40}}
                              \put(140,60){\line(0,-1){30}}
                         \put(140,60){\vector(1,0){40}}
                         \put(137,70){\makebox(0,0){$-1$}}
                         \put(97,70){\makebox(0,0){$-3$}}
                          \put(57,70){\makebox(0,0){$-2$}}
                           \put(-3,70){\makebox(0,0){$-2$}}
                            \put(-43,70){\makebox(0,0){$-2$}}
                        %  \put(100,50){\makebox(0,0){$w_{n}$}}
                        %  \put(-43,50){\makebox(0,0){$w_1$}}
                         % \put(-3,50){\makebox(0,0){$w_2$}}
                          \put(29,57){\makebox(0,0){$\dots$}}
                         % \put(60,50){\makebox(0,0){$w_{n-1}$}}
                         % \put(132,50){\makebox(0,0){$v_2$}}
                         %   \put(132,30){\makebox(0,0){$v_1$}}
                  \put(152,30){\makebox(0,0){$-2$}}
                         \end{picture}

\begin{picture}(300,100)(-100,10)
\put(-100,60){\makebox(0,0){$\partial F$:}}
\put(140,60){\circle*{4}}
                         \put(100,60){\circle*{4}}
                          \put(60,60){\circle*{4}}
                           \put(0,60){\circle*{4}}
                            \put(140,30){\circle*{4}}
                           \put(60,60){\line(1,0){40}}
                             \put(40,60){\line(1,0){20}}
                             \put(0,60){\line(1,0){20}}
                              \put(-40,60){\circle*{4}}
                              \put(-40,60){\line(1,0){40}}
                              \put(100,60){\line(1,0){40}}
                              \put(140,60){\line(0,-1){30}}
                         \put(140,60){\line(1,0){40}}
                         \put(137,70){\makebox(0,0){$-1$}}
                         \put(97,70){\makebox(0,0){$-3$}}
                          \put(57,70){\makebox(0,0){$-2$}}
                           \put(-3,70){\makebox(0,0){$-2$}}
                            \put(-43,70){\makebox(0,0){$-2$}}
                          \put(29,57){\makebox(0,0){$\dots$}}
                  \put(152,30){\makebox(0,0){$-2$}}
                  \put(220,60){\circle*{4}}
                        \put(180,60){\circle*{4}}
                         \put(250,75){\circle*{4}}
                         \put(250,45){\circle*{4}}
                         \put(220,60){\line(2,1){30}}
                         \put(180,60){\line(1,0){40}}
                         \put(220,60){\line(2,-1){30}}
                         \put(210,70){\makebox(0,0){$-1$}}
                          \put(262,75){\makebox(0,0){$-2$}}
                          \put(178,70){\makebox(0,0){$ -3k $}}
                           \put(262,45){\makebox(0,0){$-2$}}
                         \end{picture}

\noindent where the number of $(-2)$-vertices on the left is $n-1$.

\subsection{The family $B_{k} $ ($k\geq 1$)}
$ \Phi(s, t) = ( s, t^2, s^2t + t^{2k+1} )$ and
$ f= y(x^2+y^k)^2 - z^2 =0 $.

The graph of $D$ is

\begin{picture}(300,60)(-150,30)
\put(-100,60){\makebox(0,0){$ D $:}}
                         \put(100,60){\circle*{4}}
                          \put(60,60){\circle*{4}}
                           \put(0,60){\circle*{4}}
                           \put(60,60){\line(1,0){40}}
                             \put(40,60){\line(1,0){20}}
                             \put(0,60){\line(1,0){20}}
                              \put(-40,60){\circle*{4}}
                              \put(-40,60){\line(1,0){40}}
                         \put(100,60){\vector(2,1){30}}
                         \put(100,60){\vector(2,-1){30}}
                         \put(95,70){\makebox(0,0){$-1$}}
                          \put(57,70){\makebox(0,0){$-2$}}
                           \put(-3,70){\makebox(0,0){$-2$}}
                            \put(-43,70){\makebox(0,0){$-2$}}
                          \put(29,57){\makebox(0,0){$\dots$}}
                         % \put(60,50){\makebox(0,0){$w_{k-1}$}}
                         \end{picture}

\noindent  where the number of $( -2)$-vertices is $k-1$.

Note that the double point curve $ D $ does not depend on the parity of $k$,
however $ \Sigma $ has one component, when $ k$ is odd, and one component,
when $k$ is even. Thus the pairing $ \sigma $ changes the components of
$ D$ in the odd case, and $ \sigma $ is the identity in the even case.

\color{black}

 \vspace{2mm}

 \emph{Case 1}: $k=2n+1$.

 \vspace{2mm}

%\begin{picture}(300,50)(-150,45)
 % \put(-100,60){\makebox(0,0){$D:$}}
 %                        \put(100,60){\circle*{4}}
 %                         \put(60,60){\circle*{4}}
 %                          \put(0,60){\circle*{4}}
 %                          \put(60,60){\line(1,0){40}}
 %                            \put(40,60){\line(1,0){20}}
 %                            \put(0,60){\line(1,0){20}}
 %                             \put(-40,60){\circle*{4}}
 %                             \put(-40,60){\line(1,0){40}}
 %                        \put(100,60){\vector(2,1){30}}
 %                        \put(100,60){\vector(2,-1){30}}
 %                        \put(95,70){\makebox(0,0){$-1$}}
 %                         \put(57,70){\makebox(0,0){$-2$}}
 %                          \put(-3,70){\makebox(0,0){$-2$}}
 %                           \put(-43,70){\makebox(0,0){$-2$}}
 %                      %   \put(93,50){\makebox(0,0){$w_k$}}
 %                       %  \put(-43,50){\makebox(0,0){$w_1$}}
 %                       %  \put(-3,50){\makebox(0,0){$w_2$}}
 %                         \put(29,57){\makebox(0,0){$\dots$}}
 %                       %  \put(60,50){\makebox(0,0){$w_{k-1}$}}
 %                        \end{picture}

\begin{picture}(300,65)(-150,30)
\put(-100,60){\makebox(0,0){$\partial F$:}}
                         \put(100,60){\circle*{4}}
                          \put(60,60){\circle*{4}}
                           \put(0,60){\circle*{4}}
                           \put(140,60){\circle*{4}}
                          % \put(120,60){\circle{40}}
\qbezier(100,60)(120,100)(140,60)\qbezier(100,60)(120,20)(140,60)
                           \put(60,60){\line(1,0){40}}
                             \put(40,60){\line(1,0){20}}
                             \put(0,60){\line(1,0){20}}
                              \put(-40,60){\circle*{4}}
                              \put(-40,60){\line(1,0){40}}
                         \put(95,70){\makebox(0,0){$-1$}}
                          \put(57,70){\makebox(0,0){$-2$}}
                           \put(-3,70){\makebox(0,0){$-2$}}
                            \put(-43,70){\makebox(0,0){$-2$}}
                             \put(158,70){\makebox(0,0){$-4k-2$}}
                               \put(120,88){\makebox(0,0){$\ominus$}}
                             \put(29,57){\makebox(0,0){$\dots$}}
                         \end{picture}

\noindent
where the number of $( -2)$-vertices is $k-1$.

 \vspace{2mm}

 \emph{Case 2}: $k=2n$.

 \vspace{2mm}

%\begin{picture}(300,100)(-150,10)
%                         \put(100,60){\circle*{4}}
%                          \put(60,60){\circle*{4}}
%                           \put(0,60){\circle*{4}}
%                           \put(60,60){\line(1,0){40}}
%                             \put(40,60){\line(1,0){20}}
%                             \put(0,60){\line(1,0){20}}
%                              \put(-40,60){\circle*{4}}
%                              \put(-40,60){\line(1,0){40}}
%                         \put(100,60){\vector(1,1){30}}
%                         \put(100,60){\vector(1,-1){30}}
%                         \put(95,70){\makebox(0,0){$-1$}}
%                          \put(57,70){\makebox(0,0){$-2$}}
%                           \put(-3,70){\makebox(0,0){$-2$}}
%                            \put(-43,70){\makebox(0,0){$-2$}}
%                          \put(93,50){\makebox(0,0){$w_k$}}
%                          \put(-43,50){\makebox(0,0){$w_1$}}
%                          \put(-3,50){\makebox(0,0){$w_2$}}
%                          \put(29,57){\makebox(0,0){$\dots$}}
%                          \put(60,50){\makebox(0,0){$w_{k-1}$}}
%                         \end{picture}

\begin{picture}(300,140)(-150,-20)
\put(-100,60){\makebox(0,0){$\partial F$:}}
                         \put(100,60){\circle*{4}}
                          \put(60,60){\circle*{4}}
                           \put(0,60){\circle*{4}}
                           \put(60,60){\line(1,0){40}}
                             \put(40,60){\line(1,0){20}}
                             \put(0,60){\line(1,0){20}}
                              \put(-40,60){\circle*{4}}
                              \put(-40,60){\line(1,0){40}}
                         \put(100,60){\line(0,1){40}}
                         \put(100,60){\line(0,-1){40}}
                         \put(100,100){\circle*{4}}
                         \put(100,20){\circle*{4}}
                         \put(90,70){\makebox(0,0){$-1$}}
                          \put(57,70){\makebox(0,0){$-2$}}
                           \put(-3,70){\makebox(0,0){$-2$}}
                            \put(-43,70){\makebox(0,0){$-2$}}
                          \put(29,57){\makebox(0,0){$\dots$}}
                             \put(140,100){\circle*{4}}
                         \put(140,20){\circle*{4}}
                          \put(100,100){\line(1,0){40}}
                           \put(100,20){\line(1,0){40}}
                            \put(140,100){\line(2,1){30}}
                               \put(140,100){\line(2,-1){30}}
                                  \put(140,20){\line(2,1){30}}
                                     \put(140,20){\line(2,-1){30}}
                                      \put(170,115){\circle*{4}}
                                       \put(170,85){\circle*{4}}
                                        \put(170,35){\circle*{4}}
                                         \put(170,5){\circle*{4}}
                             \put(90,110){\makebox(0,0){$-2k-1$}}
                              \put(77,25){\makebox(0,0){$-2k-1$}}
                               \put(137,110){\makebox(0,0){$-1$}}
                              \put(137,30){\makebox(0,0){$-1$}}
                               \put(90,110){\makebox(0,0){$-2k-1$}}
                              \put(185, 115){\makebox(0,0){$-2$}}
                           \put(185, 85){\makebox(0,0){$-2$}}
                            \put(185, 35){\makebox(0,0){$-2$}}
                             \put(185,5){\makebox(0,0){$-2$}}
                         \end{picture}

\noindent
where the number of $(-2)$-vertexes on the left is again $k-1$.

\subsection{The family $C_k $ ($k\geq 1$)}
 $\Phi(s, t) = ( s, t^2, st^3 + s^k t )$ and
$ f = y(xy+x^k)^2 - z^2 =0$.

 \vspace{2mm}

 \emph{Case 1}: $k=2n+1$.

 \vspace{2mm}

\begin{picture}(300,50)(-200,40)
 \put(-180,60){\makebox(0,0){$D:$}}
                         \put(100,60){\circle*{4}}
                          \put(60,60){\circle*{4}}
                           \put(0,60){\circle*{4}}
                           \put(60,60){\line(1,0){40}}
                             \put(40,60){\line(1,0){20}}
                             \put(0,60){\line(1,0){20}}
                              \put(-40,60){\circle*{4}}
                              \put(-40,60){\line(1,0){40}}
                         \put(100,60){\vector(2,1){30}}
                       \put(-40,60){\vector(-1,0){40}}
                         \put(100,60){\vector(2,-1){30}}
                         \put(95,70){\makebox(0,0){$-1$}}
                          \put(57,70){\makebox(0,0){$-2$}}
                           \put(-3,70){\makebox(0,0){$-2$}}
                            \put(-43,70){\makebox(0,0){$-2$}}
                       %   \put(93,50){\makebox(0,0){$w_n$}}
                       %   \put(-43,50){\makebox(0,0){$w_1$}}
                      %    \put(-3,50){\makebox(0,0){$w_2$}}
                          \put(29,57){\makebox(0,0){$\dots$}}
                     %     \put(60,50){\makebox(0,0){$w_{n-1}$}}
                    %       \put(-78,50){\makebox(0,0){$a_1$}}
                   % \put(140,90){\makebox(0,0){$a_2$}}
                  %   \put(140,30){\makebox(0,0){$a_3$}}
                         \end{picture}

\begin{picture}(300,70)(-200,40)
\put(-180,60){\makebox(0,0){$\partial F$:}}
                         \put(100,60){\circle*{4}}
                          \put(60,60){\circle*{4}}
                           \put(0,60){\circle*{4}}
                           \put(140,60){\circle*{4}}
                          % \put(120,60){\circle{40}}
\qbezier(100,60)(120,100)(140,60)\qbezier(100,60)(120,20)(140,60)
                           \put(60,60){\line(1,0){40}}
                             \put(40,60){\line(1,0){20}}
                             \put(0,60){\line(1,0){20}}
                              \put(-40,60){\circle*{4}}
                              \put(-40,60){\line(1,0){40}}
                         \put(95,70){\makebox(0,0){$-1$}}
                          \put(57,70){\makebox(0,0){$-2$}}
                           \put(-3,70){\makebox(0,0){$-2$}}
                            \put(-43,70){\makebox(0,0){$-2$}}
                             \put(160,70){\makebox(0,0){$-3k+1$}}
                               \put(120,88){\makebox(0,0){$\ominus$}}
                             \put(29,57){\makebox(0,0){$\dots$}}
                              \put(-40,60){\line(-1,0){40}}
                                \put(-120,60){\circle*{4}}
                                \put(-120,60){\line(-2,1){30}}
                                \put(-120,60){\line(-2,-1){30}}
                                 \put(-150,75){\circle*{4}}
                                 \put(-150,45){\circle*{4}}
                                  \put(-80,60){\circle*{4}}
                                   \put(-80,60){\line(-1,0){40}}
                                    \put(-138,75){\makebox(0,0){$-2$}}
                                          \put(-138,45){\makebox(0,0){$-2$}}
                                           \put(-80,70){\makebox(0,0){$-4$}}
                                            \put(-115,70){\makebox(0,0){$-1$}}
                         \end{picture}

\noindent where the number of $ (-2)$-vertices in the middle is $n-1$.

 \vspace{2mm}

 \emph{Case 2}: $k=2n$.

 \vspace{2mm}

\begin{picture}(300,50)(-160,30)
 \put(-140,60){\makebox(0,0){$D:$}}
 \put(140,60){\circle*{4}}
                         \put(100,60){\circle*{4}}
                          \put(60,60){\circle*{4}}
                           \put(0,60){\circle*{4}}
                            \put(140,30){\circle*{4}}
                           \put(60,60){\line(1,0){40}}
                             \put(40,60){\line(1,0){20}}
                             \put(0,60){\line(1,0){20}}
                              \put(-40,60){\circle*{4}}
                              \put(-40,60){\line(1,0){40}}
                              \put(100,60){\line(1,0){40}}
                              \put(140,60){\line(0,-1){30}}
                         \put(140,60){\vector(1,0){40}}
                         \put(137,70){\makebox(0,0){$-1$}}
                         \put(97,70){\makebox(0,0){$-3$}}
                          \put(57,70){\makebox(0,0){$-2$}}
                           \put(-3,70){\makebox(0,0){$-2$}}
                            \put(-43,70){\makebox(0,0){$-2$}}
               %           \put(100,50){\makebox(0,0){$w_{n-1}$}}
              %            \put(-43,50){\makebox(0,0){$w_1$}}
             %             \put(-3,50){\makebox(0,0){$w_2$}}
                          \put(29,57){\makebox(0,0){$\dots$}}
            %              \put(60,50){\makebox(0,0){$w_{n-2}$}}
           %               \put(132,50){\makebox(0,0){$v_2$}}
          %                  \put(132,20){\makebox(0,0){$v_1$}}
                  \put(152,30){\makebox(0,0){$-2$}}
                  \put(-40,60){\vector(-1,0){30}}
         %         \put(-80,70){\makebox(0,0){$a_1$}}
        %          \put(180,70){\makebox(0,0){$a_2$}}
                         \end{picture}

\noindent and the graph of $\partial F$ is

\begin{picture}(300,70)(-160,25)
\put(140,60){\circle*{4}}
                         \put(100,60){\circle*{4}}
                          \put(60,60){\circle*{4}}
                           \put(0,60){\circle*{4}}
                            \put(140,30){\circle*{4}}
                           \put(60,60){\line(1,0){40}}
                             \put(40,60){\line(1,0){20}}
                             \put(0,60){\line(1,0){20}}
                              \put(-40,60){\circle*{4}}
                              \put(-40,60){\line(1,0){40}}
                              \put(100,60){\line(1,0){40}}
                              \put(140,60){\line(0,-1){30}}
                         \put(140,60){\line(1,0){40}}
                         \put(137,70){\makebox(0,0){$-1$}}
                         \put(97,70){\makebox(0,0){$-3$}}
                          \put(57,70){\makebox(0,0){$-2$}}
                           \put(-3,70){\makebox(0,0){$-2$}}
                            \put(-43,70){\makebox(0,0){$-2$}}
                          \put(29,57){\makebox(0,0){$\dots$}}
                  \put(150,30){\makebox(0,0){$-2$}}
                  \put(220,60){\circle*{4}}
                        \put(180,60){\circle*{4}}
                         \put(250,75){\circle*{4}}
                         \put(250,45){\circle*{4}}
                         \put(220,60){\line(2,1){30}}
                         \put(180,60){\line(1,0){40}}
                         \put(220,60){\line(2,-1){30}}
                         \put(213,70){\makebox(0,0){$-1$}}
                          \put(237,75){\makebox(0,0){$-2$}}
                          \put(176,70){\makebox(0,0){$ -3k +1 $}}
                           \put(237,45){\makebox(0,0){$-2$}}
                             \put(-120,60){\circle*{4}}
                                \put(-120,60){\line(-2,1){30}}
                                \put(-120,60){\line(-2,-1){30}}
                                 \put(-80,60){\line(1,0){40}}
                                 \put(-150,75){\circle*{4}}
                                 \put(-150,45){\circle*{4}}
                                  \put(-80,60){\circle*{4}}
                                   \put(-80,60){\line(-1,0){40}}
                                    \put(-138,75){\makebox(0,0){$-2$}}
                                          \put(-138,45){\makebox(0,0){$-2$}}
                                           \put(-80,70){\makebox(0,0){$-4$}}
                                            \put(-115,70){\makebox(0,0){$-1$}}
                         \end{picture}

\noindent
where the number of $(-2)$-vertices in the middle is $n-2$.

 \subsection{$F_{4} $}
 $ \Phi(s, t) = ( s, t^2, s^3t + t^5 )$ and
$ f= y(x^3+y^2)^2 - z^2 =0$.

 \begin{picture}(300,70)(-200,30)

                           \put(-150,60){\circle*{4}}
                         \put(-110,60){\circle*{4}}
                             \put(-150,60){\line(1,0){40}}
                              \put(-110,60){\line(0,-1){30}}
                              \put(-190,60){\circle*{4}}
                                \put(-110,30){\circle*{4}}
                              \put(-190,60){\line(1,0){40}}
                         \put(-110,60){\vector(1,0){40}}
                           \put(-153,70){\makebox(0,0){$-2$}}
                             \put(-113,70){\makebox(0,0){$-1$}}
          %                   \put(30,50){\makebox(0,0){$w_4$}}
                            \put(-193,70){\makebox(0,0){$-2$}}
            %              \put(-43,50){\makebox(0,0){$w_1$}}
           %               \put(-3,50){\makebox(0,0){$w_2$}}
                           \put(-123,30){\makebox(0,0){$-4$}}
         %                   \put(52,20){\makebox(0,0){$w_3$}}

%\put(-100,60){\makebox(0,0){$\partial F$:}}
                           \put(0,60){\circle*{4}}
                         \put(40,60){\circle*{4}}
                         \put(80,60){\circle*{4}}
                         \put(120,60){\circle*{4}}
                         \put(150,75){\circle*{4}}
                         \put(150,45){\circle*{4}}
                             \put(0,60){\line(1,0){40}}
                              \put(40,60){\line(0,-1){30}}
                              \put(-40,60){\circle*{4}}
                                \put(40,30){\circle*{4}}
                              \put(-40,60){\line(1,0){40}}
                         \put(40,60){\line(1,0){40}}
                          \put(80,60){\line(1,0){40}}
                          \put(120,60){\line(2,1){30}}
                          \put(120,60){\line(2,-1){30}}
                           \put(-3,70){\makebox(0,0){$-2$}}
                             \put(37,70){\makebox(0,0){$-1$}}
                            \put(-43,70){\makebox(0,0){$-2$}}
                             \put(77,70){\makebox(0,0){$-15$}}
                             \put(113,70){\makebox(0,0){$-1$}}
                             \put(136,75){\makebox(0,0){$-2$}}
                              \put(136,45){\makebox(0,0){$-2$}}
                           \put(27,30){\makebox(0,0){$-4$}}
                         \end{picture}

  \subsection{The family $H_{k} $, $k\geq 1$}\labelpar{ss:Hk}
  In this case
$ \Phi(s, t) = ( s, st + t^{3k-1}, t^3 ) $ and the equation of the image can be calculated as the $0^{th}$ fitting ideal of $ \Phi_* ( \mathcal{O}_{ ( \C^2, 0 ) } ) $, see \cite{mondfitting}. One obtains  $ f(x, y, z) = z^{3k-1} - y^3 + x^3 z + 3 xyz^k =0 $.

When $ k>1$, the local form of $ T^2 f $ along $ p ( \tau ) = ( \tau^{3k-2}, - \tau^{3k-1}, \tau^3 ) \in \Sigma^* $ is
\[
 \frac{\tau^{3k-1}}{12 k^2 - 12k + 4}
 \big[-\big((3k-3)- (3k-1) \sqrt{3} i \big) \tau x' +
 \big(3k +(3k-2) \sqrt{3} i\big) y' +
 (6 k^2 - 6k +2 ) \tau^{3k-4} z'
 \big] \cdot \]
\[ \cdot \big[-\big((3k-3)+ (3k-1) \sqrt{3} i\big) \tau x' +
 \big(3k-(3k-2) \sqrt{3} i\big) y' +
 (6 k^2 - 6k +2 ) \tau^{3k-4} z'
 \big]
 \mbox{,}
\]
where $ x'= x-\tau^{3k-2} $, $y'= y+ \tau^{3k-1} $, $ z'= z- \tau^3 $. With the choice
 $ H(x, y, z) = \partial_x f (x, y, z)+\partial_y f(x, y, z)+\partial_z f(x, y, z)$,
 one has $\mathfrak{v} = 3k-1$.

For $ k=1 $ one gets
 \[
 T^2(f) = \big[z' + \big( \frac{3}{2} + \frac{ \sqrt{3}}{2} i \big)
 \tau y' + \sqrt{3} i \tau^2 x' \big] \cdot
 \big[z' + \big( \frac{3}{2} - \frac{ \sqrt{3}}{2} i \big) \tau y' -
 \sqrt{3} i \tau^2 x' \big] \mbox{,}
\]
where $ x'= x-\tau $, $y'= y+ \tau^2 $, $ z'= z- \tau^3 $.
 In this case  $ \mathfrak{v} = 0 $.

In all cases  $ \lambda_1 + \lambda_2 + \mathfrak{v} = -3k-1 $.

\begin{picture}(300,60)(-150,40)
 \put(-140,60){\makebox(0,0){$D$:}}
                         \put(100,60){\circle*{4}}
                          \put(60,60){\circle*{4}}
                           \put(0,60){\circle*{4}}
                           \put(60,60){\line(1,0){40}}
                             \put(40,60){\line(1,0){20}}
                             \put(0,60){\line(1,0){20}}
                              \put(-40,60){\circle*{4}}
                              \put(-40,60){\line(1,0){40}}
                         \put(100,60){\vector(2,1){30}}
                         \put(100,60){\vector(2,-1){30}}
                         \put(95,70){\makebox(0,0){$-1$}}
                          \put(57,70){\makebox(0,0){$-2$}}
                           \put(-3,70){\makebox(0,0){$-2$}}
                            \put(-43,70){\makebox(0,0){$-2$}}
                          \put(29,57){\makebox(0,0){$\dots$}}
                         \end{picture}

\begin{picture}(300,70)(-150,40)
 \put(-140,60){\makebox(0,0){$\partial F$:}}
                         \put(100,60){\circle*{4}}
                          \put(60,60){\circle*{4}}
                           \put(0,60){\circle*{4}}
                           \put(140,60){\circle*{4}}
                         %  \put(120,60){\circle{40}}
 \qbezier(100,60)(120,100)(140,60)\qbezier(100,60)(120,20)(140,60)
                           \put(60,60){\line(1,0){40}}
                             \put(40,60){\line(1,0){20}}
                             \put(0,60){\line(1,0){20}}
                              \put(-40,60){\circle*{4}}
                              \put(-40,60){\line(1,0){40}}
                         \put(95,70){\makebox(0,0){$-1$}}
                          \put(57,70){\makebox(0,0){$-2$}}
                           \put(-3,70){\makebox(0,0){$-2$}}
                            \put(-43,70){\makebox(0,0){$-2$}}
                             \put(160,70){\makebox(0,0){$-9k +3$}}
                               \put(120,88){\makebox(0,0){$\ominus$}}
                             \put(29,57){\makebox(0,0){$\dots$}}
                         \end{picture}

 \noindent where the number of ($-2$)-vertexes is $ 3k-3$.

In the special case $k=1$ the germs
 $ H_1 $ and $S_1 $ are analytically  equivalent.
 %$ \partial F $:
%
%                         \begin{picture}(300,100)(-100,10)
 %                        \put(100,60){\circle*{4}}
 %                          \put(140,60){\circle*{4}}
 %                          \put(120,60){\circle{40}}
 %                        \put(92,70){\makebox(0,0){$-1$}}
%                             \put(150,70){\makebox(0,0){$-6$}}
%                               \put(120,88){\makebox(0,0){$\ominus$}}
%                         \end{picture}

 \subsection{A corank $2$ map germ}\labelpar{ss:2cor} In this case
$ \Phi (s, t) = (s^2, t^2, s^3 + t^3+ st) $ and $ f(x, y, z) = x^6-2x^4y + x^2y^2 - 2x^3y^3-2xy^4 + y^6 - 8x^2y^2z - 2x^3z^2-2xyz^2-2y^3z^2+z^4 =0 $.

\begin{picture}(300,70)(-50,10)
\put(-40,60){\makebox(0,0){$ D $:}}
                         \put(100,60){\circle*{4}}
                         \put(100,60){\vector(1,0){50}}
                          \put(100,60){\vector(-1,0){50}}
                           \put(100,60){\vector(-1,-1){40}}
                           \put(100,60){\vector(1,-1){40}}
                           \put(100,60){\vector(0,-1){50}}
                         \put(95,70){\makebox(0,0){$-1$}}
                        %  \put(95,50){\makebox(0,0){$v$}}
                         \end{picture}

                         \begin{picture}(300, 250)(-70,-150)
                         \put(-50,100){\makebox(0,0){$\partial F$:}}
                         \put(100,60){\circle*{4}}
                         \put(150,60){\circle*{4}}
                         \put(100,10){\circle*{4}}
                         \put(50,60){\circle*{4}}
                         \put(60,20){\circle*{4}}
                         \put(140,20){\circle*{4}}
                         \put(200,60){\circle*{4}}
                       \put(100,-40){\circle*{4}}
                       \put(0,60){\circle*{4}}
                       \put(20,-20){\circle*{4}}
                       \put(180,-20){\circle*{4}}
                       \put(250,60){\circle*{4}}
                       \put(200,10){\circle*{4}}
                       \put(60,-80){\circle*{4}}
                       \put(140,-80){\circle*{4}}
                       \put(-50,60){\circle*{4}}
                       \put(0,10){\circle*{4}}
                        \put(-30,-20){\circle*{4}}
                         \put(20,-70){\circle*{4}}
                          \put(230,-20){\circle*{4}}
                           \put(180,-70){\circle*{4}}
                         \put(100,60){\line(1,0){50}}
                          \put(100,60){\line(-1,0){50}}
                           \put(100,60){\line(-1,-1){40}}
                           \put(100,60){\line(1,-1){40}}
                           \put(100,60){\line(0,-1){50}}
                          \put(150,60){\line(1,0){50}}
                          \put(100,10){\line(0,-1){50}}
                          \put(50,60){\line(-1,0){50}}
                          \put(60,20){\line(-1,-1){40}}
                          \put(140,20){\line(1,-1){40}}
                              \put(200,60){\line(1,0){50}}
                              \put(200,60){\line(0,-1){50}}
                                  \put(100,-40){\line(-1,-1){40}}
                                   \put(100,-40){\line(1,-1){40}}
                                    \put(0,60){\line(-1,0){50}}
                                    \put(0,60){\line(0,-1){50}}
                                       \put(20, -20){\line(0,-1){50}}
                                          \put(20, -20){\line(-1,0){50}}
                                          \put(180,-20){\line(1,0){50}}
                                          \put(180,-20){\line(0,-1){50}}
                                              \put(95,70){\makebox(0,0){$-1$}}
                                                \put(145,70){\makebox(0,0){$-5$}}
                                                \put(45,70){\makebox(0,0){$-5$}}
                                                 \put(55,30){\makebox(0,0){$-5$}}
                                                  \put(138,30){\makebox(0,0){$-5$}}
\put(107,20){\makebox(0,0){$-5$}}
\put(-5,70){\makebox(0,0){$-1$}}
 \put(195,70){\makebox(0,0){$-1$}}
\put(15,-10){\makebox(0,0){$-1$}}
 \put(179,-10){\makebox(0,0){$-1$}}
 \put(108,-30){\makebox(0,0){$-1$}}
 \put(-55,70){\makebox(0,0){$-2$}}
 \put(-10,20){\makebox(0,0){$-2$}}
\put(-35,-10){\makebox(0,0){$-2$}}
 \put(10,-60){\makebox(0,0){$-2$}}
 \put(55,-70){\makebox(0,0){$-2$}}
\put(138,-70){\makebox(0,0){$-2$}}
\put(245,70){\makebox(0,0){$-2$}}
\put(207,20){\makebox(0,0){$-2$}}
 \put(230,-10){\makebox(0,0){$-2$}}
\put(192,-60){\makebox(0,0){$-2$}}
                         \end{picture}

\end{document}